
\documentclass[a4paper,12pt]{article}

\oddsidemargin-6mm
\evensidemargin-6mm
\topmargin-20mm
\textwidth17cm
\textheight25cm

\usepackage[english]{babel} 
\usepackage{amsmath,amsthm,amsfonts,amssymb,graphicx,units}



\theoremstyle{plain}
\newtheorem{theo}{Theorem}[section]
\newtheorem{lem}[theo]{Lemma}

\newtheorem{cor}[theo]{Corollary}
\newtheorem{rem}[theo]{Remark}

\theoremstyle{definition}
\newtheorem{ex}{Example}
\numberwithin{equation}{section}

\newenvironment{dedi}[1]{\vspace*{#1mm}\center\sf}{\endcenter}
\newenvironment{abstr}[1]{\vspace*{#1mm}\noindent\bf Abstract\;\rm}{}
\newenvironment{keyw}[1]{\vspace*{#1mm}\noindent\bf Key words\;\rm}{}
\newenvironment{amscl}[1]{\vspace*{#1mm}\noindent\bf AMS MSC-Classifications\;\rm}{}
\newenvironment{acknow}[1]{\vspace*{#1mm}\noindent\bf Acknowledgements\;\rm}{}


\def\reals{\mathbb{R}}

\def\p{\partial}

\newcommand{\sqrts}[1]{#1^{\!\nicefrac{1}{2}}}

\def\Amat{\alpha}
\def\L{\Lambda}

\def\ol{\overline}
\def\ubr{\underbrace}

\def\ds{\displaystyle}


\newcommand{\set}[2]{\{#1\,\mid\,#2\}}


\DeclareMathOperator{\Max}{M}
\DeclareMathOperator{\A}{A}

\def\As{\A^{*}}

\DeclareMathOperator{\ed}{d}
\DeclareMathOperator{\cd}{\delta}
\def\na{\nabla}
\def\nas{\na_{\!\!\mathrm{s}}\,}
\DeclareMathOperator{\rot}{rot}

\DeclareMathOperator{\opdiv}{div}
\def\div{\opdiv}
\DeclareMathOperator{\Div}{Div}
\def\Divs{\Div_{\mathrm{s}}\,}
\DeclareMathOperator{\id}{id}
\DeclareMathOperator{\sym}{sym}

\DeclareMathOperator{\R}{Q}


\def\om{\Omega}
\def\ga{\Gamma}
\def\gad{\ga_{\mathtt{D}}}
\def\gan{\ga_{\mathtt{N}}}
\def\gar{\ga_{\mathtt{R}}}
\def\eps{\epsilon}
\def\alphao{\alpha_{1}}
\def\alphat{\alpha_{2}}






\def\Nelem{N_{\mathtt{elem}}}


\def\ut{\tilde{u}}
\def\pt{\tilde{p}}
\def\xt{\tilde{x}}
\def\yt{\tilde{y}}
\def\Et{\tilde{E}}
\def\Ht{\tilde{H}}
\def\uh{u_{\mathtt{h}}}
\def\ph{p_{\mathtt{h}}}
\def\Eh{E_{\mathtt{h}}}
\def\Hh{H_{\mathtt{h}}}
\def\Eho{E_{\mathtt{h},1}}
\def\Eht{E_{\mathtt{h},2}}
\def\uiter{u_{\mathtt{iter}}}
\def\piter{p_{\mathtt{iter}}}
\def\pav{p_{\mathtt{avg}}}


\def\M{\mathcal{M}}

\def\Mna{\M_{\na}}
\def\Mdiv{\M_{\div}}
\def\Mmix{\M_{\mathrm{mix}}}
\def\Mrd{\M_{\mathrm{rd}}}
\def\Mec{\M_{\mathrm{ec}}}

\def\Mle{\M_{\mathrm{le}}}
\def\Mdiff{\M_{\mathrm{diff}}}


\newcommand{\ttr}[1]{\tau_{#1}}  
\def\ttrga{\ttr{\ga}}
\def\ttrgad{\ttr{\gad}}
\def\ttrgan{\ttr{\gan}}
\newcommand{\ntr}[1]{\nu_{#1}}  
\def\ntrga{\ntr{\ga}}

\def\ntrgan{\ntr{\gan}}


\DeclareMathOperator{\hilbert}{\sf H}
\DeclareMathOperator{\cont}{\sf C}
\DeclareMathOperator{\lebesgue}{\sf L}
\DeclareMathOperator{\rotation}{\sf R}
\DeclareMathOperator{\divergence}{\sf D}
\DeclareMathOperator{\diffform}{\sf D}
\DeclareMathOperator{\codiffform}{\Delta}


\def\hio{\hilbert_{1}}
\def\hit{\hilbert_{2}}



\newcommand{\cgen}[2]{\cont^{#1}_{#2}}
\def\cic{\cgen{\infty}{\circ}}
\def\ciga{\cgen{\infty}{\ga}}
\def\cigad{\cgen{\infty}{\gad}}
\def\cigan{\cgen{\infty}{\gan}}


\newcommand{\lgen}[2]{\lebesgue^{#1}_{#2}}
\def\lt{\lgen{2}{}}
\def\li{\lgen{\infty}{}}


\newcommand{\hgen}[2]{\hilbert^{#1}_{#2}}
\def\ho{\hgen{1}{}}
\def\hoc{\hgen{1}{\circ}}
\def\hoga{\hgen{1}{\ga}}

\def\hogad{\hgen{1}{\gad}}
\def\hogan{\hgen{1}{\gan}}
\def\htwo{\hgen{2}{}}
\def\hoh{\hgen{\nicefrac{1}{2}}{}}

\def\hmoh{\hgen{-\nicefrac{1}{2}}{}}


\newcommand{\dgen}[2]{\divergence^{#1}_{#2}}
\def\d{\dgen{}{}}
\def\dz{\dgen{}{0}}

\def\dgad{\dgen{}{\gad}}
\def\dzgad{\dgen{}{\gad,0}}
\def\dgan{\dgen{}{\gan}}
\def\dzgan{\dgen{}{\gan,0}}


\newcommand{\rgen}[2]{\rotation^{#1}_{#2}}
\def\r{\rgen{}{}}
\def\rz{\rgen{}{0}}

\def\rgad{\rgen{}{\gad}}
\def\rzgad{\rgen{}{\gad,0}}
\def\rgan{\rgen{}{\gan}}
\def\rzgan{\rgen{}{\gan,0}}


\def\ciqgad{\cgen{\infty,q}{\gad}}
\def\ciqgan{\cgen{\infty,q}{\gan}}


\def\ltq{\lgen{2,q}{}}
\def\ltqpo{\lgen{2,q+1}{}}
\def\ltqmo{\lgen{2,q-1}{}}


\newcommand{\diffgen}[2]{\diffform^{#1}_{#2}}
\def\dq{\diffgen{q}{}}
\def\dqz{\diffgen{q}{0}}

\def\dqgad{\diffgen{q}{\gad}}
\def\dqzgad{\diffgen{q}{\gad,0}}

\def\dqpozgad{\diffgen{q+1}{\gad,0}}


\newcommand{\degen}[2]{\codiffform^{#1}_{#2}}
\def\deq{\degen{q}{}}

\def\deqgan{\degen{q}{\gan}}
\def\deqzgan{\degen{q}{\gan,0}}
\def\deqpo{\degen{q+1}{}}

\def\deqpogan{\degen{q+1}{\gan}}
\def\deqzgan{\degen{q}{\gan,0}}


\def\harmdigadgan{\mathcal{H}_{\gad,\gan}}
\def\harmdigangad{\mathcal{H}_{\gan,\gad}}
\def\harmdiqgadgan{\mathcal{H}^q_{\gad,\gan}}
\def\harmdiqpogadgan{\mathcal{H}^{q+1}_{\gad,\gan}}


\newcommand{\norm}[1]{|#1|}

\newcommand{\normhio}[1]{\norm{#1}_{\hio}}

\newcommand{\normhioao}[1]{\norm{#1}_{\hio,\alphao}}

\newcommand{\normhioaomo}[1]{\norm{#1}_{\hio,\alphao^{-1}}}

\newcommand{\normhitat}[1]{\norm{#1}_{\hit,\alphat}}

\newcommand{\normhitatmo}[1]{\norm{#1}_{\hit,\alphat^{-1}}}

\newcommand{\normdaidaoat}[1]{\norm{#1}_{D(\A),\alphao,\alphat}}

\newcommand{\normdasaomoatmo}[1]{\norm{#1}_{D(\As),\alphao^{-1},\alphat^{-1}}}

\newcommand{\tnorm}[1]{\|#1\|}

\newcommand{\normlt}[1]{\norm{#1}_{\lt}}	
\newcommand{\normho}[1]{\norm{#1}_{\ho}}	
\newcommand{\normd}[1]{\norm{#1}_{\d}}	
	
\newcommand{\normltq}[1]{\norm{#1}_{\ltq}}


\newcommand{\scp}[2]{\langle#1,#2\rangle}

\newcommand{\scphio}[2]{\scp{#1}{#2}_{\hio}}

\newcommand{\scphioao}[2]{\scp{#1}{#2}_{\hio,\alphao}}
\newcommand{\scphioaomo}[2]{\scp{#1}{#2}_{\hio,\alphao^{-1}}}
\newcommand{\scphit}[2]{\scp{#1}{#2}_{\hit}}

\newcommand{\scphitat}[2]{\scp{#1}{#2}_{\hit,\alphat}}
\newcommand{\scphitatmo}[2]{\scp{#1}{#2}_{\hit,\alphat^{-1}}}

\newcommand{\scplt}[2]{\scp{#1}{#2}_{\lt}}
\newcommand{\scpltq}[2]{\scp{#1}{#2}_{\ltq}}
\newcommand{\scpltqpo}[2]{\scp{#1}{#2}_{\ltqpo}}


\title{\sc Functional A Posteriori Error Equalities\\ for Conforming Mixed Approximations\\ of Elliptic Problems}

\author{\sf Immanuel Anjam\footnote{\tt Department of Mathematical Information Technology, 
University of Jyv\"askyl\"a, 
P.O. Box 35 (Agora), 
FI-40014 University of Jyv\"askyl\"a, 
Finland,
email: immanuel.anjam@jyu.fi}
\quad\&\quad
Dirk Pauly\footnote{\tt Fakult\"at f\"ur Mathematik,
Universit\"at Duisburg-Essen, 
Campus Essen,
Thea-Leymann-Str. 9, 
DE-45141 Essen, 
Germany, 
email: dirk.pauly@uni-due.de}
}


\begin{document}

\maketitle

\begin{dedi}{0}
Dedicated to Sergey Igorevich Repin on the occasion of his $60^{\textrm{th}}$ birthday
\end{dedi}

\begin{abstr}{12}
In this paper we show how to find the {\it exact error}
(not just an estimate of the error)
of a conforming mixed approximation by using the functional type
a posteriori error estimates in the spirit of Repin \cite{repinbookone}. 
The error is measured in a mixed norm which takes into account both
the primal and dual variables. We derive this result 
for all elliptic partial differential equations of the class
$$\As\A\,x+x=f,$$
where $\A$ is a linear, densely defined and closed (usually a differential) operator 
and $\As$ its adjoint. 
We first derive a special version of our main result by using a simplified reaction-diffusion problem 
to demonstrate the strong connection to the classical functional a posteriori error estimates 
of Repin \cite{repinbookone}. After this we derive the main result in an abstract setting. 
Our main result states that in order to obtain the {\it exact global error} value 
of a conforming mixed approximation with primal variable $x$ 
and dual variable $y$, i.e.,
$$\As y+x=f,\quad\A x=y,$$ 
one only needs the problem data and the mixed approximation $(\xt,\yt)\in D(\A)\times D(\As)$ 
of the exact solution $(x,y)\in D(\A)\times\big(D(\As)\cap R(\A)\big)$, i.e.,
the {\it equality}
$$\norm{x-\xt}^2
+\norm{\A(x-\xt)}^2
+\norm{y-\yt}^2
+\norm{\As(y-\yt)}^2
=\norm{f-\xt-\As\yt}^2
+\norm{\yt-\A\xt}^2$$
holds. There is no need for calculating any auxiliary data. 
The calculation of the exact error consists of simply calculating two (usually integral) quantities 
where all the quantities are known after the approximate solution has been obtained
by any conforming method guaranteeing $(\xt,\yt)\in D(\A)\times D(\As)$. 
We also show some numerical computations to confirm the results.
\end{abstr}

\begin{keyw}{0}
functional a posteriori error estimate, error equality, 
elliptic boundary value problem, mixed formulation, combined norm
\end{keyw}

\begin{amscl}{0}
65N15
\end{amscl}

\newpage
\tableofcontents


\section{Introduction}

The results presented in this paper are based on the conception of functional type a posteriori error estimates. 
These type estimates are valid for any conforming approximation and contain only global constants. 
We note that estimates for nonconforming approximations are known as well but will not be discussed in this paper.
In the case of the class of PDEs studied in this paper, the estimates do not contain even global constants. 
For a detailed exposition of the theory see the books \cite{repinbookone} 
by Repin and \cite{NeittaanmakiRepin2004} by Repin and Neittaanm\"aki
or for a more computational point of view \cite{MaliRepinNeittaanmaki2014}
by Mali, Repin, and Neittaanm\"aki.

We will measure the error of our approximations in a combined norm, which includes the error of both, 
the primal and the dual variable. This is especially useful for mixed methods 
where one calculates an approximation for both the primal and dual variables, see e.g.
the book of Brezzi and Fortin \cite{brezzifortinbookone}.

In this paper, we study the linear equation 
$$(\As\alphat\A+\alphao)x=f$$
presented in the mixed form
$$\As y+\alphao x=f,\quad\alphat\A=y,$$
where $\alphao,\alphat$ are linear and self adjoint topological isomorphisms
on two Hilbert spaces $\hio$ and $\hit$
and $\A:D(\A)\subset\hio\to\hit$ is a linear, densely defined and closed operator
with adjoint operator $\As:D(\As)\subset\hit\to\hio$.
Our main result is Theorem \ref{thm:Gmain} and it shortly reads 
as the {\it functional a posterior error equality}
\begin{align}
\label{erroreq}
\begin{split}
&\qquad\normhioao{x-\xt}^2
+\normhitat{\A(x-\xt)}^2
+\normhitatmo{y-\yt}^2
+\normhioaomo{\As(y-\yt)}^2\\
&=\normhioaomo{f-\alphao\xt-\As\yt}^2
+\normhitatmo{\yt-\alphat\A\xt}^2
\end{split}
\end{align}
being valid for any conforming mixed approximation $(\xt,\yt)\in D(\A)\times D(\As)$ 
of the exact solution $(x,y)\in D(\A)\times D(\As)$.

Functional a posteriori error estimates for combined norms 
were first exposed in the paper \cite{repinsautersmolianskiaposttwosideell},
where the authors present two-sided estimates bounding the error
by the same quantity from below and from above aside from multiplicative constants. 
Unlike in other estimates, these constants are $1$ and $\sqrt{3}$.
In \cite{repinsautersmolianskiaposttwosideell} the authors studied problems of the type 
\begin{align}
\label{probone}
\As\alpha\A x=f ,
\end{align}
i.e., the case $\alpha=\alphat$, $\alphao=0$.


The paper is organized as follows. 
In Section \ref{sec:M} we prove our main results for a simple model problem 
and show the strong connection to the classical functional a posteriori error estimates. 
In Section \ref{sec:G} we derive our main results in an abstract Hilbert space setting and
in Section \ref{sec:A} we show applications of the general results to several classical problems. 
Section \ref{sec:inhomobc} is devoted to inhomogeneous boundary conditions and
finally in Section \ref{sec:N} we present some numerical experiments to confirm our theoretical results.

\section{Results for a Model Problem} \label{sec:M}

Let $\om\subset\reals^d$, $d\geq1$, be open and without loss of generality connected,
so let $\om$ be a domain with boundary $\ga:=\p\om$. 
We emphasize that $\om$ may be bounded or unbounded, like an exterior domain, or non of both.
Moreover, $\ga$ does not need to have any smoothness.
We denote by $\scplt{\,\cdot\,}{\,\cdot\,}$ and $\normlt{\,\cdot\,}$ the inner product and the norm 
in $\lt$ for scalar-, vector- and matrix-valued functions. 
Throughout the paper we will not indicate the dependence on $\om$
in our notations of the functional spaces.
Moreover, we define the usual Sobolev spaces
$$\ho:=\set{\varphi\in\lt}{\na\varphi\in\lt},\quad\d:=\set{\psi\in\lt}{\div\psi\in\lt}$$
and as the closure of smooth and compactly supported test functions\footnote{The 
spaces $\ciga$ and $\hoga$ are often denoted by $\cic$ and $\hoc$.}
$$\hoga:=\ol{\ciga}^{\ho}.$$
These are Hilbert spaces equipped with the respective graph norms denoted by
$\normho{\,\cdot\,}$, $\normd{\,\cdot\,}$.

Our simple model reaction-diffusion problem reads as follows: 
Find the potential $u\in\hoga$, i.e., the primal variable, such that
\begin{align}
-\Delta u+u=-\div\na u+u=f,\label{eq:Ipde}
\end{align}
where $f \in \lt$ is the source term. The variational formulation
of this problem consists of finding $u\in\hoga$ such that
\begin{align} 
\label{eq:Igen}
\forall\,\varphi\in\hoga\qquad
\scplt{\na u}{\na\varphi}+\scplt{u}{\varphi}=\scplt{f}{\varphi}.
\end{align}
The natural energy norm for this problem is $\normho{\,\cdot\,}$.
Of course, by the Lax-Milgram lemma or Riesz' representation theorem 
\eqref{eq:Igen} has a unique solution $u\in\hoga$ satisfying 
$$\normho{u}\leq\normlt{f}.$$
Often, a variable of interest is also the flux, i.e., the dual variable,
$$p:=\na u\in\d,$$
leading to the mixed formulation
$$-\div p+u=f,\quad\na u=p.$$ 
We note that indeed by \eqref{eq:Igen} the flux $p$ belongs to $\d$ and $\div p=u-f$ holds.
Let us further emphasize that even
$$p\in\d\cap\,\na\hoga$$
holds, this is, $p$ is also irrotational, has got vanishing tangential trace 
and is $\lt$-perpendicular to the so-called Dirichlet fields.

We will understand a pair $(\ut,\pt)\in\hoga\times\d$ 
without further requirements as an approximation 
of the exact solution pair $(u,p)\in\hoga\times\d$. 
For the convenience of the reader, 
we first present the classical functional error upper bounds,
frequently called \emph{error majorants}, for the approximations of $u$ and $p$. 


\begin{theo} 
\label{thm:Iest}
For any approximation $\ut\in\hoga$ of the exact potential $u$ 
\begin{align}
\normho{u-\ut}^2=\min_{\psi\in\d}\Mna(\ut,\psi)=\Mna(\ut,p),\label{eq:Iest}
\end{align}
holds, where
\begin{align} 
\label{eq:Imaj}
\Mna(\ut,\psi):=\normlt{f-\ut+\div\psi}^2+\normlt{\psi-\na\ut}^2.
\end{align}
\end{theo}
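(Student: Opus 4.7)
The plan is to show the stronger equality
$$\Mna(\ut,\psi)=\normho{u-\ut}^2+\normd{\psi-p}^2$$
for every $\ut\in\hoga$ and every $\psi\in\d$, from which both assertions in \eqref{eq:Iest} are immediate: the right-hand side is minimized by $\psi=p$ (where it equals $\normho{u-\ut}^2$), and it exceeds $\normho{u-\ut}^2$ otherwise.

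First I would introduce the error variables $e:=u-\ut\in\hoga$ and $q:=\psi-p\in\d$. Using the strong form $u-\div p=f$ and $p=\na u$ of the mixed problem, the two residuals appearing in \eqref{eq:Imaj} rewrite as
\begin{align*}
f-\ut+\div\psi &= (u-\ut)+\div(\psi-p)=e+\div q,\\
\psi-\na\ut &= (\psi-p)+\na(u-\ut)=q+\na e.
\end{align*}
Plugging these into \eqref{eq:Imaj} and expanding the two squares gives
$$\Mna(\ut,\psi)=\normlt{e}^2+\normlt{\na e}^2+\normlt{q}^2+\normlt{\div q}^2+2\bigl(\scplt{e}{\div q}+\scplt{\na e}{q}\bigr).$$

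The key step is to cancel the cross terms by the defining adjointness relation between $\na\colon\hoga\subset\lt\to\lt$ and $\div\colon\d\subset\lt\to\lt$. Because $e\in\hoga$ is the limit (in the graph norm) of test fields vanishing on $\ga$, and $q\in\d$, the integration by parts
$$\scplt{\na e}{q}+\scplt{e}{\div q}=0$$
holds. This identity is precisely what the definition of $\hoga$ as the closure of $\ciga$ in $\ho$ is designed to guarantee, and is the only analytic content needed; I do not expect any technical obstacle beyond recording this adjoint pairing cleanly.

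With the mixed terms cancelled, the remaining pieces regroup into the graph norms
$$\Mna(\ut,\psi)=\bigl(\normlt{e}^2+\normlt{\na e}^2\bigr)+\bigl(\normlt{q}^2+\normlt{\div q}^2\bigr)=\normho{u-\ut}^2+\normd{\psi-p}^2,$$
which is the claimed error identity. Taking the infimum over $\psi\in\d$ yields the minimum in \eqref{eq:Iest}, attained uniquely at $\psi=p$ since $\normd{\,\cdot\,}$ is a norm, and gives the value $\normho{u-\ut}^2$ as asserted.
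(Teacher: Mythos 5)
Your proof is correct, but it takes a different route from the one the paper uses for this theorem. The paper's proof works through the weak formulation: it subtracts $\scplt{\na\ut}{\na\varphi}+\scplt{\ut}{\varphi}$ from \eqref{eq:Igen}, inserts the adjointness identity $\scplt{\div\psi}{\varphi}+\scplt{\psi}{\na\varphi}=0$, applies the Cauchy--Schwarz inequality to obtain $\normho{u-\ut}^2\leq\Mna(\ut,\psi)$ with the test function $\varphi=u-\ut$, and then separately checks sharpness at $\psi=p$. You instead rewrite the two residuals via the strong equations $f=u-\div p$ and $p=\na u$, expand the squares, and cancel the cross terms by the same adjointness relation, arriving at the exact Pythagorean decomposition $\Mna(\ut,\psi)=\normho{u-\ut}^2+\normd{\psi-p}^2$. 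This is in fact the strategy the paper itself adopts later, in Remark \ref{rem:Iproof} and in the proof of the main Theorem \ref{thm:Gmain}, so you have anticipated the authors' preferred argument for the general case. Your route buys a strictly stronger statement --- it identifies the gap $\Mna(\ut,\psi)-\normho{u-\ut}^2$ exactly as $\normd{\psi-p}^2$, rather than merely showing it is nonnegative and vanishes at $\psi=p$ --- and it generalizes more readily (e.g.\ to inhomogeneous boundary conditions, since only $u-\ut\in\hoga$ is used). What the paper's testing-based proof buys in return is the connection to the classical functional a posteriori machinery: it exhibits the majorant as arising from the variational formulation, which directly motivates the subsequent discussion of minimizing $\Mna(\ut,\cdot)$ to recover the dual problem \eqref{eq:Igenp}. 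The only ingredient you should make sure is on record before invoking $q=\psi-p\in\d$ is that $p=\na u$ indeed belongs to $\d$, which the paper justifies from \eqref{eq:Igen} via $\div p=u-f$.
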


\begin{proof}
To derive the upper bound, we subtract $\scplt{\na\ut}{\na\varphi}+\scplt{\ut}{\varphi}$ 
from both sides of the generalized form \eqref{eq:Igen}, and obtain for all $\varphi\in\hoga$
\begin{align} 
\label{eq:Imaj_1}
\scplt{\na(u-\ut)}{\na \varphi}+\scplt{u-\ut}{\varphi}
=\scplt{f-\ut}{\varphi}-\scplt{\na\ut}{\na\varphi}.
\end{align}
For an arbitrary function $\psi\in\d$ and any $\varphi\in\hoga$  
we have $\scplt{\div\psi}{\varphi}+\scplt{\psi}{\na\varphi}=0$.
By adding this to the right hand side of \eqref{eq:Imaj_1} it becomes
\begin{align} 
\label{eq:Imaj_2}
\scplt{\na(u-\ut)}{\na \varphi}+\scplt{u-\ut}{\varphi}
&=\scplt{f-\ut+\div\psi}{\varphi}+\scplt{\psi-\na\ut}{\na\varphi}\nonumber\\
& \leq\normlt{f-\ut+\div\psi}\normlt{\varphi}
+\normlt{\psi-\na\ut}\normlt{\na\varphi}\\
&	\leq\Mna(\ut,\psi)^{\nicefrac{1}{2}}\normho{\varphi}.\nonumber
\end{align}
By choosing $\varphi:=u-\ut\in\hoga$  we obtain 
$\normho{u-\ut}^2\leq\Mna(\ut,\psi)$.
Since $p\in\d$, we see that $\Mna(\ut,p)=\normho{u-\ut}^2$.
\end{proof}

As the majorant $\Mna$ is sharp,
it immediately provides a technique to obtain approximations for the exact flux $p$. 
Minimizing $M_{\na}(\psi):=\Mna(\ut,\psi)$ with respect to $\psi$
yields by differentiation for all $\psi\in\d$
\begin{align*}
0\overset{!}{=}M_{\na}'(p)\psi&=2\scplt{f-\ut+\div p}{\div\psi}+2\scplt{p-\na\ut}{\psi}\\
&=2\scplt{f+\div p}{\div\psi}+2\scplt{p}{\psi}
\end{align*}
since $\scplt{\ut}{\div\psi}=-\scplt{\na\ut}{\psi}$ because $\ut\in\hoga$.
Hence the following problem occurs: Find $p\in\d$ such that
\begin{align} 
\label{eq:Igenp}
\forall\psi\in\d\qquad
\scplt{\div p}{\div\psi}+\scplt{p}{\psi}=-\scplt{f}{\div\psi}.
\end{align}
Note that $\ut$ is not present here and the natural energy norm for this problem is $\normd{\,\cdot\,}$.
Once again, by the Lax-Milgram lemma \eqref{eq:Igenp} has a unique solution $p\in\d$ satisfying 
$$\normd{p}\leq\normlt{f}.$$
Since $\na u\in\d$ solves \eqref{eq:Igenp}, i.e., with \eqref{eq:Ipde}
$$\scplt{\div\na u}{\div\psi}
=\scplt{u}{\div\psi}
-\scplt{f}{\div\psi}
=-\scplt{\na u}{\psi}
-\scplt{f}{\div\psi},$$
we get indeed $p=\na u$.

\begin{rem}
\mbox{}
\begin{itemize}
\item[\bf(i)]
The variational formulation \eqref{eq:Igenp} for $p$ 
can also be achieved by testing \eqref{eq:Ipde} with $\div\psi$ for all $\psi\in\d$ since
\begin{align*}
-\scplt{f}{\div\psi}
&=\scplt{\div\na u}{\div\psi}-\scplt{u}{\div\psi}\\
&=\scplt{\div\na u}{\div\psi}+\scplt{\na u}{\psi}
=\scplt{\div p}{\div\psi}+\scplt{p}{\psi}.
\end{align*}
\item[\bf(ii)]
By \eqref{eq:Igenp}
$$p\,\bot\,\dz:=\set{v\in\d}{\div v=0}$$
holds. Thus, by the Helmholtz decomposition, i.e., $\lt=\ol{\na\hoga}\oplus\dz$,
we get $p\in\ol{\na\hoga}$. 
Here, $\bot$ and $\oplus$ denote orthogonality and the orthogonal sum in $\lt$.
\item[\bf(iii)]
\eqref{eq:Igenp} is the dual problem to \eqref{eq:Igen} and its strong formulation 
in duality to \eqref{eq:Ipde} is
\begin{align}
\label{nadivpeq}
-\na\div p+p=\na f
\end{align}
with mixed formulation
$$\na v+p=\na f,\quad-\div p=v.$$
We note that in general $\div p$ does not belong to $\hoga$, not even to $\ho$.
On the other hand, by \eqref{eq:Igenp} we see $\div p+f\in\hoga$ with $\na(\div p+f)=p$
and the natural Neumann boundary condition $\div p+f=0$ at $\ga$ appears.
Hence $f\in\hoga$, if and only if $v:=-\div p\in\hoga$,
and $f\in\ho$, if and only if $v\in\ho$. In both cases \eqref{nadivpeq} holds
and moreover for all $\varphi\in\hoga$
$$\scplt{\na v}{\na\varphi}+\scplt{v}{\varphi}
=-\scplt{p}{\na\varphi}+\scplt{v}{\varphi}+\scplt{\na f}{\na\varphi}
=\scplt{\na f}{\na\varphi},$$
thus $v\in\ho$ solves in the strong sense $-\Delta v+v=-\Delta f$ and $v=f$ at $\ga$
if $\Delta f\in\lt$.
\end{itemize}
\end{rem}


\begin{theo} 
\label{thm:Iestp}
For any approximation $\pt\in\d$ of the exact flux $p$
\begin{align}
\label{eq:Iestp}
\normd{p-\pt}^2=\min_{\varphi\in\hoga}\Mdiv(\pt,\varphi)=\Mdiv(\pt,u),
\end{align}
holds, where
\begin{align} 
\label{eq:Imajp}
\Mdiv(\pt,\varphi):=\normlt{f-\varphi+\div\pt}^2+\normlt{\pt-\na\varphi}^2.
\end{align}
\end{theo}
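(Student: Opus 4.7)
The plan is to mirror the proof of Theorem \ref{thm:Iest}, but now working from the dual variational formulation \eqref{eq:Igenp} for $p$ and using $\varphi\in\hoga$ as the free parameter instead of $\psi\in\d$. The roles of the Dirichlet space and the divergence space simply exchange, and the adjoint relation $\scplt{\div\psi}{\varphi}+\scplt{\psi}{\na\varphi}=0$, valid for $\varphi\in\hoga$ and $\psi\in\d$, again serves as the bridge that couples the two residuals appearing in $\Mdiv$.

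Concretely, I would begin by subtracting $\scplt{\div\pt}{\div\psi}+\scplt{\pt}{\psi}$ from both sides of \eqref{eq:Igenp}, obtaining for all $\psi\in\d$ the error identity
\begin{align*}
\scplt{\div(p-\pt)}{\div\psi}+\scplt{p-\pt}{\psi}
=-\scplt{f+\div\pt}{\div\psi}-\scplt{\pt}{\psi}.
\end{align*}
Next, for an arbitrary $\varphi\in\hoga$ I would add the null term $\scplt{\varphi}{\div\psi}+\scplt{\na\varphi}{\psi}=0$ to the right hand side, which reshuffles it into
\begin{align*}
-\scplt{f-\varphi+\div\pt}{\div\psi}-\scplt{\pt-\na\varphi}{\psi}.
\end{align*}
Two applications of the Cauchy--Schwarz inequality followed by the elementary bound $a\normlt{\div\psi}+b\normlt{\psi}\leq(a^2+b^2)^{\nicefrac{1}{2}}\normd{\psi}$ yield
\begin{align*}
\scplt{\div(p-\pt)}{\div\psi}+\scplt{p-\pt}{\psi}
\leq\Mdiv(\pt,\varphi)^{\nicefrac{1}{2}}\normd{\psi}.
\end{align*}
Testing with $\psi:=p-\pt\in\d$ gives the upper bound $\normd{p-\pt}^2\leq\Mdiv(\pt,\varphi)$ for every $\varphi\in\hoga$.

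To see that the bound is attained at $\varphi=u$, I would use the strong form \eqref{eq:Ipde} together with $p=\na u$: the identity $-\div p+u=f$ turns $f-u+\div\pt$ into $\div(\pt-p)$, and $\pt-\na u$ equals $\pt-p$, so $\Mdiv(\pt,u)=\normlt{\div(\pt-p)}^2+\normlt{\pt-p}^2=\normd{p-\pt}^2$, proving equality and the minimum assertion simultaneously. There is no real obstacle here; the only point requiring care is that $\varphi\in\hoga$ (rather than merely $\ho$) is what makes the added null term vanish, exactly mirroring the use of $\psi\in\d$ in Theorem \ref{thm:Iest}.
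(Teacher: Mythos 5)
Your proposal is correct and follows essentially the same route as the paper's proof: the same error identity from \eqref{eq:Igenp}, the same null term $\scplt{\varphi}{\div\psi}+\scplt{\na\varphi}{\psi}=0$, the same choice $\psi=p-\pt$, and attainment at $\varphi=u$. The only cosmetic difference is that you verify $\Mdiv(\pt,u)=\normd{p-\pt}^2$ directly from the strong form \eqref{eq:Ipde}, whereas the paper reads it off from the identity \eqref{eq:Imajp_2}; both are immediate.
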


\begin{proof}
We add $-\scplt{\div\pt}{\div\psi}-\scplt{\pt}{\psi}$ to the both sides 
of the variational formulation \eqref{eq:Igenp} and obtain for all $\psi\in\d$
\begin{align}\label{eq:Imajp_1}
\scplt{\div(p-\pt)}{\div\psi}+\scplt{p-\pt}{\psi}=
-\scplt{f+\div\pt}{\div\psi}-\scplt{\pt}{\psi}.
\end{align}
For any $\varphi\in\hoga$ we have again $\scplt{\na\varphi}{\psi}+\scplt{\varphi}{\div\psi}=0$. 
By adding this to the right hand side of \eqref{eq:Imajp_1} it becomes
\begin{align} 
\label{eq:Imajp_2}
\scplt{\div(p-\pt)}{\div \psi}+\scplt{p-\pt}{\psi}
&=-\scplt{f-\varphi+\div\pt}{\div\psi}-\scplt{\pt-\na\varphi}{\psi}\nonumber\\
&\leq\normlt{f-\varphi+\div\pt}\normlt{\div\psi}
+\normlt{\pt-\na\varphi}\normlt{\psi}\\
&\leq\Mdiv(\pt,\varphi)^{\nicefrac{1}{2}}\normd{\psi}.\nonumber
\end{align}
Choosing $\psi=p-\pt\in\d$ yields $\normd{p-\pt}^2\leq\Mdiv(\pt,\varphi)$.
Finally $\Mdiv(\pt,u)=\normd{p-\pt}^2$ follows by $u\in\hoga$.
\end{proof}

As before, the sharpness of the majorant $\Mdiv$ gives us a technique
to obtain approximations of the potential $u$. 
In fact, now global minimization of $M_{\div}(\varphi):=\Mdiv(\pt,\varphi)$ 
with respect to $\varphi$ would lead to the variational formulation \eqref{eq:Igen}
for finding $u$, since for all $\varphi\in\hoga$
\begin{align*}
0\overset{!}{=}M_{\div}'(u)\varphi&=-2\scplt{f-u+\div\pt}{\varphi}-2\scplt{\pt-\na u}{\na\varphi}\\
&=2\scplt{u-f}{\varphi}+2\scplt{\na u}{\na\varphi}
\end{align*}
since $\scplt{\div\pt}{\varphi}=-\scplt{\pt}{\na\varphi}$ by $\pt\in\d$.
 
Finally, we note that the functional a posteriori error majorants $\Mna$ and $\Mdiv$ 
contain only the problem data, conforming numerical approximations
and the free functions $\psi$ and $\varphi$.


Now, we define the combined norm for the reaction-diffusion problem 
in a canonical way as the sum of the energy norms for the potential and the flux:
\begin{equation*}
\tnorm{(\varphi,\psi)}^2 :=
\normho{\varphi}^2+\normd{\psi}^2
=\normlt{\varphi}^2+\normlt{\na\varphi}^2 
+\normlt{\psi}^2+\normlt{\div\psi}^2
\end{equation*}

\begin{rem}
We know $\normho{u}\leq\normlt{f}$ and $\normd{p}\leq\normlt{f}$.
It is indeed notable that 
$$\tnorm{(u,p)}=\normlt{f}$$ 
holds, which follows immediately by $f=-\div p+u$ and $p=\na u$ since
$$\normlt{f}^2
=\normlt{\div p}^2
+\normlt{u}^2
-2\scplt{\div p}{u}\\
=\normlt{\div p}^2
+\normlt{u}^2
+2\normlt{p}^2
=\tnorm{(u,p)}^2.$$
Hence the solution operator
$$L:\lt\to\hoga\times\d;f\mapsto(u,p)$$
has norm $\norm{L}=1$, i.e., $L$ is an isometry.
\end{rem}

Our main result for this simple reaction-diffusion problem 
basically combines Theorems \ref{thm:Iest} and \ref{thm:Iestp}.
However, we outline that the resulting right hand side does not contain $u$ or $p$ anymore
and is even an equality.

\begin{theo} 
\label{thm:Imain}
For any approximation $(\ut,\pt)\in\hoga\times\d$ 
of the exact solution $(u,p)$
\begin{align}
\tnorm{(u,p)-(\ut,\pt)}^2
&=\Mmix(\ut,\pt)
\label{eq:Imain1}
\intertext{and the normalized counterpart}
\frac{\tnorm{(u,p)-(\ut,\pt)}^2}{\tnorm{(u,p)}^2}
&=\frac{\Mmix(\ut,\pt)}{\normlt{f}^2}
\label{eq:Imain2}
\end{align}
hold, where
\begin{align} 
\label{eq:Imain}
\begin{split}
\Mmix(\ut,\pt)
&:=\Mna(\ut,\pt)=\Mdiv(\pt,\ut)
=\normlt{f-\ut+\div\pt}^2+\normlt{\pt-\nabla \ut}^2.
\end{split}
\end{align}
\end{theo}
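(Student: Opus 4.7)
My plan is to prove \eqref{eq:Imain1} by a direct algebraic expansion of the right-hand side, using the exact equations $f=u-\div p$ and $p=\na u$ to rewrite the residuals in the two majorant terms as linear combinations of the four error components $u-\ut$, $\na(u-\ut)$, $p-\pt$, $\div(p-\pt)$. The normalized identity \eqref{eq:Imain2} then follows immediately from \eqref{eq:Imain1} together with the remark $\tnorm{(u,p)}^2=\normlt{f}^2$.

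More concretely, the first step is to substitute
$$f-\ut+\div\pt=(u-\ut)-\div(p-\pt),\qquad \pt-\na\ut=\na(u-\ut)-(p-\pt),$$
which uses only that $(u,p)$ solves the mixed problem. Squaring and adding gives
$$\Mmix(\ut,\pt)=\normlt{u-\ut}^2+\normlt{\div(p-\pt)}^2+\normlt{\na(u-\ut)}^2+\normlt{p-\pt}^2-2C,$$
where
$$C=\scplt{u-\ut}{\div(p-\pt)}+\scplt{\na(u-\ut)}{p-\pt}.$$

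The crucial step is to show $C=0$. This is exactly the integration-by-parts identity (or definition of the weak divergence through duality with $\hoga$) applied to the pair $(u-\ut,\,p-\pt)\in\hoga\times\d$; both conformity conditions are used here, namely $u-\ut\in\hoga$ so that its trace on $\ga$ vanishes in the distributional sense, and $p-\pt\in\d$ so that $\div(p-\pt)\in\lt$ is a legitimate test object against $\hoga$. Given $C=0$, the four remaining squared norms are precisely $\tnorm{(u,p)-(\ut,\pt)}^2$, which yields \eqref{eq:Imain1}. Observing that the majorant can equivalently be read as $\Mna(\ut,\pt)$ (by keeping $\pt\in\d$ frozen in \eqref{eq:Imaj}) or as $\Mdiv(\pt,\ut)$ (by keeping $\ut\in\hoga$ frozen in \eqref{eq:Imajp}) establishes the identities in \eqref{eq:Imain}.

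I do not expect any real obstacle: the whole argument rests on the one-line cancellation $C=0$. The reason this gives equality rather than a one-sided bound, in contrast to Theorems \ref{thm:Iest} and \ref{thm:Iestp}, is that here we do not have to insert an arbitrary free $\psi\in\d$ or $\varphi\in\hoga$ into Cauchy--Schwarz steps; instead the exact $p$ and $u$ are present implicitly through the exact equations, and the only inequality that appeared in the earlier proofs is replaced by the exact duality pairing between $\hoga$ and $\d$. The normalized form \eqref{eq:Imain2} is then simply the division of \eqref{eq:Imain1} by $\normlt{f}^2=\tnorm{(u,p)}^2$.
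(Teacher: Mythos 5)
Your proof is correct. It is, however, not the route the paper takes in the displayed proof of Theorem \ref{thm:Imain}: there the authors set $\psi=\pt$ in \eqref{eq:Imaj_2} and $\varphi=\ut$ in \eqref{eq:Imajp_2}, add the two resulting identities, and only then specialize the free test functions to $\varphi=u-\ut$ and $\psi=p-\pt$, so that the right-hand side collapses via the exact equations $u-\div p=f$ and $\na u=p$. That presentation is chosen to make the equality appear as the exact combination of the two classical majorant derivations (Theorems \ref{thm:Iest} and \ref{thm:Iestp}), with the single Cauchy--Schwarz step of each replaced by an identity. Your argument --- rewrite the residuals as $f-\ut+\div\pt=(u-\ut)-\div(p-\pt)$ and $\pt-\na\ut=\na(u-\ut)-(p-\pt)$, expand the squares, and kill the cross term $C$ by the duality pairing between $\hoga$ and $\d$ applied to $(u-\ut,p-\pt)$ --- is exactly the ``simple proof'' the paper itself records in Remark \ref{rem:Iproof}, and it is also the proof used later for the abstract Theorem \ref{thm:Gmain}. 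What the paper's official proof buys is the explicit link to the classical functional a posteriori estimates; what your (and the remark's) direct expansion buys is brevity and robustness --- it needs only $u-\ut\in\hoga$ and $p-\pt\in\d$, which is why it extends immediately to inhomogeneous boundary conditions and to the general Hilbert-space setting. Your concluding identification of $\Mmix$ with $\Mna(\ut,\pt)$ and $\Mdiv(\pt,\ut)$ and the derivation of \eqref{eq:Imain2} from the isometry $\tnorm{(u,p)}=\normlt{f}$ are both fine.
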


The error in the combined norm can thus be exactly computed by quantities we already know: 
the given problem data $f$ and the conforming approximation $(\ut,\pt)$.

\begin{proof}
Set $\psi=\pt$ in \eqref{eq:Imaj_2} and $\varphi=\ut$ in \eqref{eq:Imajp_2}.
Then, for any $\varphi\in\hoga$ and any $\psi\in\d$ we have
\begin{align} 
\label{eq:Imain_1}
\scplt{\na(u-\ut)}{\na \varphi}+\scplt{u-\ut}{\varphi}
&=\scplt{f-\ut+\div\pt}{\varphi}+\scplt{\pt-\na\ut}{\na\varphi},\\
\label{eq:Imain_2}
\scplt{\div (p-\pt)}{\div\psi}+\scplt{p-\pt}{\psi}
&=-\scplt{f-\ut+\div\pt}{\div\psi}-\scplt{\pt-\na\ut}{\psi}.
\end{align}
Adding \eqref{eq:Imain_1} and \eqref{eq:Imain_2} we obtain
\begin{align}
\label{eq:main_3}
\begin{split}
&\qquad\scplt{\na(u-\ut)}{\na\varphi}+\scplt{u-\ut}{\varphi}
+\scplt{\div(p-\pt)}{\div\psi}+\scplt{p-\pt}{\psi}\\
&=\scplt{f-\ut+\div\pt}{\varphi-\div\psi}+\scplt{\pt-\na\ut}{\na\varphi-\psi}.
\end{split}
\end{align}
By choosing $\varphi:=u-\ut\in\hoga$ and $\psi:=p-\pt\in\d$, 
the left hand side of \eqref{eq:main_3} turns to the combined norm 
of the error of the approximation. Since we have
\begin{align*}
\varphi-\div\psi
&=u-\ut-\div p+\div\pt
=f-\ut+\div\pt,\\
\na\varphi-\psi
&=\na u-\na\ut-p+\pt
=\pt-\na\ut,
\end{align*}
\eqref{eq:main_3} becomes \eqref{eq:Imain1}.
Putting $\ut=0$, $\pt=0$ in \eqref{eq:Imain1} shows $\tnorm{(u,p)}=\normlt{f}$
and thus \eqref{eq:Imain2}.
\end{proof}

\begin{rem}
\label{thm:Imainrem}
\mbox{}
\begin{itemize}
\item[\bf(i)]
We note the similarity of the error majorants in 
Theorems \ref{thm:Iest}, \ref{thm:Iestp} and \ref{thm:Imain}.
\item[\bf(ii)] 
It is clear that Theorem \ref{thm:Imain} generalizes Theorems \ref{thm:Iest} and \ref{thm:Iestp}
since these two can be recovered from Theorem \ref{thm:Imain}.
We just estimate
\begin{align*}
\Mna(\ut,p)
=\normho{u-\ut}^2
&\leq\tnorm{(u,p)-(\ut,\pt)}^2
=\Mmix(\ut,\pt)=\M_\na(\ut,\pt)
\intertext{and note that the left hand side does not depend on $\psi:=\pt\in\d$.
Analogously we estimate}
\Mdiv(\pt,u)
=\normd{p-\pt}^2
&\leq\tnorm{(u,p)-(\ut,\pt)}^2
=\Mmix(\ut,\pt)=\Mdiv(\pt,\ut)
\end{align*}
and note that the left hand side does not depend on $\varphi:=\ut\in\hoga$.
\end{itemize}
\end{rem}

\begin{rem} 
\label{rem:Iproof}
There is a simple proof of Theorem \ref{thm:Imain}
using just \eqref{eq:Ipde} and $p=\na u$:
\begin{align*}
\Mmix(\ut,\pt)
&=\normlt{f-\ut+\div\pt}^2+\normlt{\pt-\na\ut}^2\\
&=\normlt{u-\ut+\div\pt-\div p}^2+\normlt{\pt-p+\na u-\na\ut}^2\\
&=\normlt{u-\ut}^2+\normlt{\div(\pt-p)}^2+2\scplt{u-\ut}{\div(\pt-p)}\\
&\qquad+\normlt{\pt-p}^2+\normlt{\na(u-\ut)}^2+2\scplt{\pt-p}{\na(u-\ut)}\\
&=\tnorm{(u,p)-(\ut,\pt)}^2
\end{align*}
In the last line we have used as before 
$\scplt{u - \ut}{\div(\pt - p)}=-\scplt{\na(u-\ut)}{\pt - p}$
since $u-\ut\in\hoga$.
This shows immediately, that Theorem \ref{thm:Imain} 
extends to more general situations as well.
E.g. inhomogeneous boundary conditions can be treated since
only $u-\ut\in\hoga$ is needed. 
\end{rem}


\section{Results for the General Case} \label{sec:G}

In this section we derive our main result in an abstract setting which
allows for mixed boundary conditions as well as coefficients for the PDEs. 
We will prove the main result by using the simple approach presented in Remark \ref{rem:Iproof}.

Let ${\hio}$ and ${\hit}$ be two Hilbert spaces with inner products 
$\scphio{\,\cdot\,}{\,\cdot\,}$ and $\scphit{\,\cdot\,}{\,\cdot\,}$, respectively. 
Moreover, let $\A:D(\A)\subset\hio\to\hit$ be a densely defined and closed linear operator
and $\As:D(\As)\subset\hit\to\hio$ its adjoint. We note $\A^{**}=\bar{\A}=\A$ and 
\begin{align}
\label{partint}
\forall\,\varphi\in D(\A)\quad\forall\,\psi\in D(\As)\qquad
\scphit{\A\varphi}{\psi}=\scphio{\varphi}{\As\psi}.
\end{align}
Equipped with the natural graph norms $D(\A)$ and $D(\As)$ are Hilbert spaces. 
Furthermore, we introduce two linear, self adjoint and positive topological isomorphisms 
$\alphao:{\hio}\to{\hio}$ and $\alphat:{\hit}\to{\hit}$. Especially we have
$$\exists\,c>0\quad\forall\,\varphi\in\hio\qquad 
c^{-1}\normhio{\varphi}^2\leq\scphio{\alphao\varphi}{\varphi}\leq c\normhio{\varphi}^2$$
and the corresponding holds for $\alphat$.
For any inner product and corresponding norm we introduce weighted counterparts with sub-index notation. 
For example, for elements from ${\hio}$ we define a new inner product 
$\scphioao{\,\cdot\,}{\,\cdot\,}:=\scphio{\alphao\,\cdot\,}{\,\cdot\,}$ 
and a new induced norm $\normhioao{\,\cdot\,}$.
Using this notation we can define for $\varphi\in D(\A)$ and $\psi\in D(\As)$ new weighted norms
on $D(\A)$, $D(\As)$ as well as on the product space $D(\A)\times D(\As)$ by
\begin{align*}
\normdaidaoat{\varphi}^2
&:=\normhioao{\varphi}^2
+\normhitat{\A\varphi}^2,\\
\normdasaomoatmo{\psi}^2
&:=\normhitatmo{\psi}^2
+\normhioaomo{\As\psi}^2,\\
\tnorm{(\varphi,\psi)}^2
&:=\normdaidaoat{\varphi}^2
+\normdasaomoatmo{\psi}^2.
\end{align*}

Let $f \in {\hio}$. By the Lax-Milgram lemma
(or by Riesz' representation theorem) we get immediately:

\begin{lem}
\label{laxmilgramA}
The (primal) variational problem
\begin{align}
\label{varA}
\forall\varphi\in D(\A)\qquad
\scphitat{\A x}{\A\varphi}+\scphioao{x}{\varphi}=\scphio{f}{\varphi}
\end{align}
admits a unique solution $x\in D(\A)$ satisfying $\normdaidaoat{x}\leq\normhioaomo{f}$. 
Moreover, $y_{x}:=\alphat\A x$ belongs to $D(\As)$ and $\As y_{x}=f-\alphao x$.
Hence, the strong and mixed formulations
\begin{align} 
\label{eq:Gstrong}
\As\alphat\A x+\alphao x&=f,\\
\label{mixedformulation}
\As y_{x}+\alphao x&=f,\quad\alphat\A x=y_{x}
\end{align}
hold with $(x,y_{x})\in D(\A)\times \big(D(\As)\times\alphat R(\A)\big)$.
\end{lem}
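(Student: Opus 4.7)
The plan is to recognize the bilinear form on the left-hand side of \eqref{varA} as the scalar product associated with the graph norm $\normdaidaoat{\,\cdot\,}$ on $D(\A)$, and then invoke the Riesz representation theorem to obtain existence and uniqueness. First I would observe that since $\alphao$ and $\alphat$ are positive self-adjoint topological isomorphisms, the weighted inner products $\scphioao{\,\cdot\,}{\,\cdot\,}$ and $\scphitat{\,\cdot\,}{\,\cdot\,}$ induce norms equivalent to $\normhio{\,\cdot\,}$ and $\normhit{\,\cdot\,}$ respectively, so $D(\A)$ equipped with $\normdaidaoat{\,\cdot\,}$ remains a Hilbert space. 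The bilinear form
$$a(\xi,\varphi):=\scphitat{\A\xi}{\A\varphi}+\scphioao{\xi}{\varphi}$$
is precisely the associated scalar product, so $a(\varphi,\varphi)=\normdaidaoat{\varphi}^{2}$ and boundedness follows from Cauchy--Schwarz in the weighted products. Since $\varphi\mapsto\scphio{f}{\varphi}$ is continuous on $\hio$ and hence on $D(\A)$, Riesz (or equivalently Lax--Milgram) yields a unique solution $x\in D(\A)$ of \eqref{varA}.

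Next, I would derive the a priori bound by testing \eqref{varA} with $\varphi:=x$ and rewriting the pairing with $f$ through the $\alphao^{-1}$-weighted product. Using self-adjointness of $\alphao$,
$$\scphio{f}{x}=\scphio{\alphao^{-1}f}{\alphao x}=\scphioaomo{f}{\alphao x}\le\normhioaomo{f}\,\normhioaomo{\alphao x}=\normhioaomo{f}\,\normhioao{x}\le\normhioaomo{f}\,\normdaidaoat{x},$$
where the key identity is $\normhioaomo{\alphao x}^{2}=\scphio{\alphao x}{x}=\normhioao{x}^{2}$. Combined with $a(x,x)=\normdaidaoat{x}^{2}$, this yields the claimed stability estimate $\normdaidaoat{x}\le\normhioaomo{f}$.

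Finally, I would read off the mixed and strong forms from the variational equation by applying the definition of the adjoint. For every test function $\varphi\in D(\A)$, rewriting \eqref{varA} gives
$$\scphit{\A\varphi}{\alphat\A x}=\scphitat{\A x}{\A\varphi}=\scphio{f}{\varphi}-\scphioao{x}{\varphi}=\scphio{f-\alphao x}{\varphi}.$$
The right-hand side is continuous in $\varphi$ with respect to $\normhio{\,\cdot\,}$, so by definition of $\As$ the element $y_{x}:=\alphat\A x$ lies in $D(\As)$ with $\As y_{x}=f-\alphao x$. This is exactly \eqref{mixedformulation}, and substituting $\alphat\A x$ for $y_{x}$ produces \eqref{eq:Gstrong}. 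The only point needing mild care in my view is the bookkeeping of the weights $\alphao$, $\alphat$ and their inverses in the Cauchy--Schwarz step; everything else is a direct rewriting of the variational identity.
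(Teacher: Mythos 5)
Your proposal is correct and follows exactly the route the paper intends: the paper states this lemma as an immediate consequence of the Lax--Milgram lemma (equivalently Riesz' representation theorem, since the bilinear form is the weighted graph inner product on $D(\A)$), and your elaboration of the stability bound via $\scphio{f}{x}=\scphioaomo{f}{\alphao x}$ and of the adjoint identification $y_{x}=\alphat\A x\in D(\As)$ with $\As y_{x}=f-\alphao x$ is exactly the bookkeeping the paper leaves implicit. The weight manipulations in your Cauchy--Schwarz step are all accurate, so nothing is missing.
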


To get the dual problem, we multiply the first equation of \eqref{mixedformulation}
by $\As\psi$ with $\psi\in D(\As)$ 
taking the right weighted scalar product and use $y_{x}=\alphat\A x\in D(\As)$.
We obtain
$$\scphioaomo{\As y_{x}}{\As\psi}
+\scphioaomo{\alphao x}{\As\psi}
=\scphioaomo{f}{\As\psi}.$$
Since $x\in D(\A)$
$$\scphioaomo{\alphao x}{\As\psi}
=\scphio{x}{\As\psi}
=\scphit{\A x}{\psi}
=\scphitatmo{y_{x}}{\psi}$$
holds, we get again by the Lax-Milgram's lemma

\begin{lem}
\label{laxmilgramAs}
The (dual) variational problem
\begin{align}
\label{varAs}
\forall\psi\in D(\As)\qquad
\scphioaomo{\As y}{\As\psi}
+\scphitatmo{y}{\psi}
=\scphioaomo{f}{\As\psi}
\end{align}
admits a unique solution $y\in D(\As)$ satisfying $\normdasaomoatmo{y}\leq\normhioaomo{f}$.
Moreover, $y=y_{x}$ holds and thus $y$ even belongs to 
$D(\As)\cap\alphat R(\A)$ with $x$ and $y_{x}$ from Lemma \ref{laxmilgramA}.
Furthermore, $\alphao^{-1}(\As y-f)\in D(\A)$ with
$A\alphao^{-1}(\As y-f)=-\alphat^{-1}y$.
\end{lem}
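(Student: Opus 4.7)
The plan is to apply the Lax-Milgram lemma on $D(\As)$ equipped with the weighted inner product
\[
(y,\psi)\mapsto \scphioaomo{\As y}{\As\psi}+\scphitatmo{y}{\psi},
\]
whose induced norm is exactly $\normdasaomoatmo{\,\cdot\,}$. Since $\alphao$ and $\alphat$ are self adjoint positive topological isomorphisms, this norm is equivalent to the ordinary graph norm of $\As$, so $(D(\As),\normdasaomoatmo{\,\cdot\,})$ is a Hilbert space. The bilinear form on the left of \eqref{varAs} coincides with this inner product, so it is continuous and coercive with constant $1$. The right hand side $\psi\mapsto\scphioaomo{f}{\As\psi}$ is a continuous linear functional on $D(\As)$ of norm at most $\normhioaomo{f}$, by Cauchy-Schwarz. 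Hence Lax-Milgram (or Riesz representation) yields a unique $y\in D(\As)$ solving \eqref{varAs}, and testing with $\psi:=y$ gives $\normdasaomoatmo{y}^{2}=\scphioaomo{f}{\As y}\le\normhioaomo{f}\,\normdasaomoatmo{y}$, i.e.\ the asserted bound.

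Next I would identify $y$ with the element $y_{x}=\alphat\A x\in D(\As)$ produced by Lemma \ref{laxmilgramA}, for which $\As y_{x}=f-\alphao x$. Plugging $y_{x}$ into the left hand side of \eqref{varAs}, using self adjointness of $\alphao$ and \eqref{partint} for $x\in D(\A)$ and $\psi\in D(\As)$,
\[
\scphioaomo{\As y_{x}}{\As\psi}
=\scphioaomo{f}{\As\psi}-\scphio{x}{\As\psi}
=\scphioaomo{f}{\As\psi}-\scphit{\A x}{\psi}
=\scphioaomo{f}{\As\psi}-\scphitatmo{y_{x}}{\psi},
\]
so $y_{x}$ satisfies \eqref{varAs}. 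By the uniqueness established above, $y=y_{x}$, which immediately gives $y\in D(\As)\cap\alphat R(\A)$.

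Finally, the remaining assertion is a direct consequence: $\As y-f=\As y_{x}-f=-\alphao x$, so $\alphao^{-1}(\As y-f)=-x\in D(\A)$, and
\[
\A\bigl(\alphao^{-1}(\As y-f)\bigr)=-\A x=-\alphat^{-1}y_{x}=-\alphat^{-1}y.
\]
There is really no serious obstacle here: the derivation of \eqref{varAs} given in the paragraph preceding the lemma already does the substantive work, and the only points requiring a moment of care are verifying that the weighted inner product induces a norm equivalent to the $D(\As)$ graph norm (so Lax-Milgram applies in the right space) and using \eqref{partint} correctly to match scalar products in $\hio$ and $\hit$ against the coefficients $\alphao^{\pm1},\alphat^{\pm1}$.
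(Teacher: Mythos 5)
Your proposal is correct and follows essentially the same route as the paper: Lax--Milgram in the weighted graph norm for existence, uniqueness and the bound, then verifying that $y_{x}=\alphat\A x$ satisfies \eqref{varAs} via $\As y_{x}=f-\alphao x$ and \eqref{partint}, and reading off the final assertion from $\alphao^{-1}(\As y-f)=-x$. You simply spell out the Lax--Milgram step and the last claim, which the paper leaves to the discussion preceding the lemma.
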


\begin{proof}
We just have to show that $y_{x}\in D(\As)$ solves \eqref{varAs}.
But this follows directly since for all $\psi\in D(\As)$
\begin{align*}
\scphioaomo{\As y_{x}}{\As\psi}
&=-\scphio{x}{\As\psi}
+\scphioaomo{f}{\As\psi}\\
&=-\scphit{\A x}{\psi}
+\scphioaomo{f}{\As\psi}
=-\scphitatmo{y_{x}}{\psi}
+\scphioaomo{f}{\As\psi}.
\end{align*}
Hence $y_{x}=y$ and $\A^{**}=\A$ completes the proof.
\end{proof}

\begin{rem}
\label{rem:isometrygen}
We know $\normdaidaoat{x}\leq\normhioaomo{f}$ and $\normdasaomoatmo{y}\leq\normhioaomo{f}$.
It is indeed notable that 
$$\tnorm{(x,y)}=\normhioaomo{f}$$ 
holds, which follows immediately by $y=\alphat\A x$ and
\begin{align*}
\normhioaomo{f}^2
=\normhioaomo{\As\alphat\A x+\alphao x}^2
&=\normhioaomo{\As y}^2
+\normhioaomo{\alphao x}^2
+2\ubr{\scphioaomo{\As\alphat\A x}{\alphao x}}_{\ds=\scphio{\As\alphat\A x}{x}}\\
&=\normhioaomo{\As y}^2
+\normhioao{x}^2
+2\ubr{\scphit{\alphat\A x}{\A x}}_{\ds=\normhitat{\A x}^2}
=\tnorm{(x,y)}^2.
\end{align*}
Thus the solution operator
$$L:\hio\to D(\A)\times D(\As);f\mapsto(x,y)$$
(equipped with the proper weighted norms)
has norm $\norm{L}=1$, i.e., $L$ is an isometry.
\end{rem}

By the latter remark the mixed norm on $D(\A)\times D(\As)$ yields an isomtery.
This motivates to use the mixed norm also for error estimates.
As it turns out, we even obtain an error equality.
We present our main result of the paper.

\begin{theo} 
\label{thm:Gmain}
Let $(x,y),(\xt,\yt)\in D(\A)\times D(\As)$ be the exact solution of \eqref{mixedformulation}
and any conforming approximation, respectively. Then
\begin{align} 
\label{eq:Gmain1}
\tnorm{(x,y)-(\xt,\yt)}^2=\M(\xt,\yt)
\end{align}
and the normalized counterpart
\begin{align} 
\label{eq:Gmain2}
\frac{\tnorm{(x,y)-(\xt,\yt)}^2}{\tnorm{(x,y)}^2}
=\frac{\M(\xt,\yt)}{\normhioaomo{f}^2} 
\end{align}
hold, where
\begin{align} 
\label{eq:Gmain}
\M(\xt,\yt):=\normhioaomo{f-\alphao\xt-\As\yt}^2
+\normhitatmo{\yt-\alphat\A\xt}^2.
\end{align}
\end{theo}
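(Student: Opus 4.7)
The plan is to follow the pattern of Remark \ref{rem:Iproof} and prove the identity as a direct algebraic computation in the abstract Hilbert-space setting, avoiding any Cauchy--Schwarz or variational detour. Writing $e_x := x - \xt \in D(\A)$ and $e_y := y - \yt \in D(\As)$, the conformity of the approximation is exactly what guarantees that the adjoint relation \eqref{partint} may be applied to the pair $(e_x, e_y)$. I would first use the exact mixed system \eqref{mixedformulation} to rewrite the two residuals appearing inside $\M(\xt,\yt)$ purely in terms of the errors:
\begin{align*}
f - \alphao \xt - \As \yt &= \alphao e_x + \As e_y, \\
\yt - \alphat \A \xt &= \alphat \A e_x - e_y.
\end{align*}

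Next I would expand the two weighted squared norms in \eqref{eq:Gmain}. Using that $\alphao$ and $\alphat$ are self-adjoint and that by definition of the weighted scalar products one has $\scphioaomo{\alphao u}{v} = \scphio{u}{v}$ and $\scphitatmo{\alphat u}{v} = \scphit{u}{v}$, together with the adjoint identity \eqref{partint} applied to $e_x \in D(\A)$ and $e_y \in D(\As)$, a short computation yields
\begin{align*}
\normhioaomo{\alphao e_x + \As e_y}^2
&= \normhioao{e_x}^2 + 2\scphit{\A e_x}{e_y} + \normhioaomo{\As e_y}^2, \\
\normhitatmo{\alphat \A e_x - e_y}^2
&= \normhitat{\A e_x}^2 - 2\scphit{\A e_x}{e_y} + \normhitatmo{e_y}^2.
\end{align*}
Adding these two expressions, the cross terms $\pm 2\scphit{\A e_x}{e_y}$ cancel exactly, leaving
\begin{equation*}
\M(\xt,\yt) = \normhioao{e_x}^2 + \normhitat{\A e_x}^2 + \normhitatmo{e_y}^2 + \normhioaomo{\As e_y}^2 = \tnorm{(x,y) - (\xt,\yt)}^2,
\end{equation*}
which is exactly \eqref{eq:Gmain1}.

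For the normalized identity \eqref{eq:Gmain2} I would simply specialize \eqref{eq:Gmain1} to the admissible choice $(\xt,\yt) = (0,0) \in D(\A) \times D(\As)$; since $\M(0,0) = \normhioaomo{f}^2$, this immediately yields $\tnorm{(x,y)}^2 = \normhioaomo{f}^2$, consistent with Remark \ref{rem:isometrygen}, and dividing \eqref{eq:Gmain1} through by $\normhioaomo{f}^2$ delivers \eqref{eq:Gmain2}.

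There is no real obstacle here; the one subtlety is careful bookkeeping with the weighted scalar products. The identity hinges entirely on the algebraic symmetry between the two residuals produced by \eqref{mixedformulation} combined with the adjoint relation \eqref{partint}, which converts $\scphio{e_x}{\As e_y}$ into $\scphit{\A e_x}{e_y}$ so that the two cross terms coincide up to sign and cancel. Since only $e_x \in D(\A)$ and $e_y \in D(\As)$ enter the argument, no density, no minimization, and no estimate is needed, and the proof will extend verbatim to inhomogeneous boundary data along the lines indicated at the end of Remark \ref{rem:Iproof}.
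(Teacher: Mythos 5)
Your proposal is correct and follows essentially the same route as the paper: both rewrite the two residuals via the exact mixed system, expand the weighted squares, and cancel the cross terms using the adjoint relation \eqref{partint}, with the normalized identity obtained from the isometry $\tnorm{(x,y)}=\normhioaomo{f}$ (which the paper likewise notes can be read off by inserting $(\xt,\yt)=(0,0)$). The only cosmetic difference is that you convert $\scphio{e_x}{\As e_y}$ to $\scphit{\A e_x}{e_y}$ inside the first expansion, while the paper keeps both cross terms and cancels them at the very end.
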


\begin{proof}
Using \eqref{eq:Gstrong} and inserting $0=\alphat\A x-y$ we get by \eqref{partint}
\begin{align*}
\M(\xt,\yt)
&=\normhioaomo{\alphao x-\alphao\xt+\As y-\As\yt}^2
+\normhitatmo{\yt-y+\alphat\A x-\alphat\A\xt}^2\\
&=\normhioao{x-\xt}^2
+\normhioaomo{\As(y-\yt)}^2
+2\scphioaomo{\alphao(x-\xt)}{\As(y-\yt)}\\
&\qquad+\normhitatmo{\yt-y}^2
+\normhitat{\A(x-\xt)}^2
+2\scphitatmo{\yt-y}{\alphat\A(x-\xt)}\\
&=\normdaidaoat{x-\xt}^2+\normdasaomoatmo{y-\yt}^2\\
&\qquad+2\scphio{x-\xt}{\As(y-\yt)}
-2\scphit{\A(x-\xt)}{y-\yt}\\
&=\tnorm{(x,y)-(\xt,\yt)}^2.
\end{align*}
\eqref{eq:Gmain2} follows by the isometry property in Remark \ref{rem:isometrygen}, completing the proof.
\end{proof}

We note that the isometry property, i.e., $\tnorm{(x,y)}=\normhioaomo{f}$,
can be seen by inserting $(\xt,\yt)=(0,0)$ into \eqref{eq:Gmain1} as well.

\begin{rem}
Theorem \ref{thm:Gmain} can also be deduced as a special case of the equation \cite[(7.2.14)]{NeittaanmakiRepin2004} in the book of Neittaam\"aki and Repin.
\end{rem}

\begin{rem} 
\label{rem:Gbehavior}
Of course, the majorant $\M$ is continuous. Especially we have
\begin{align*}
\M(\xt,\yt)\xrightarrow{\xt\rightarrow x\text{ in }D(\A)}
&\normdasaomoatmo{y-\yt}^2=\M(x,\yt),\\
\M(\xt,\yt)\xrightarrow{\yt\rightarrow y\text{ in }D(\As)}
&\normdaidaoat{x-\xt}^2=\M(\xt,y)
\end{align*}
and $\M(\xt,\yt)\to\M(x,y)=0$ if $(\xt,\yt)\to(x,y)$ in $D(\A)\times D(\As)$.
This suggests that the majorant $\M$ can also be used as an error indicator for adaptive computations, 
even though the equality \eqref{eq:Gmain1} is global.
\end{rem}

\begin{cor}
\label{cor:maintheocor}
Theorem \ref{thm:Gmain} provides the well known a posteriori error estimates 
for the primal and dual problems. 
\begin{itemize}
\item[\bf(i)]
For any $\xt\in D(\A)$ it holds
$\ds\normdaidaoat{x-\xt}^2
=\min_{\psi\in D(\As)}\M(\xt,\psi)
=\M(\xt,y)$.
\item[\bf(ii)]
For any $\yt\in D(\As)$ it holds
$\ds\normdasaomoatmo{y-\yt}^2
=\min_{\varphi\in D(\A)}\M(\varphi,\yt)
=\M(x,\yt)$.
\end{itemize}
\end{cor}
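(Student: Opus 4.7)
The plan is to derive the corollary as an immediate consequence of Theorem \ref{thm:Gmain}, by viewing the identity \eqref{eq:Gmain1} as a two-variable equality and then fixing one argument while varying the other over its admissible Hilbert space.

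First, I would recall that Theorem \ref{thm:Gmain} holds for \emph{any} conforming pair $(\xt,\yt)\in D(\A)\times D(\As)$, so it is legitimate to replace $\yt$ by an arbitrary $\psi\in D(\As)$ or $\xt$ by an arbitrary $\varphi\in D(\A)$. Expanding the mixed norm on the left-hand side of \eqref{eq:Gmain1} gives
\begin{align*}
\normdaidaoat{x-\xt}^2+\normdasaomoatmo{y-\psi}^2=\M(\xt,\psi)
\end{align*}
for all $\psi\in D(\As)$, and symmetrically
\begin{align*}
\normdaidaoat{x-\varphi}^2+\normdasaomoatmo{y-\yt}^2=\M(\varphi,\yt)
\end{align*}
for all $\varphi\in D(\A)$. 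These two identities are the whole content needed.

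For part (i), I would fix $\xt\in D(\A)$, so that $\normdaidaoat{x-\xt}^2$ is a constant with respect to $\psi$. The remaining term $\normdasaomoatmo{y-\psi}^2$ is non-negative and vanishes precisely when $\psi=y$, which lies in $D(\As)$ by Lemma \ref{laxmilgramAs}. Hence $\M(\xt,\psi)$ attains its minimum over $D(\As)$ at $\psi=y$, with minimum value $\normdaidaoat{x-\xt}^2$, which simultaneously establishes both equalities in (i). Part (ii) follows by the same argument with the roles of the variables exchanged, using $x\in D(\A)$ from Lemma \ref{laxmilgramA}.

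There is no real obstacle here: the only thing to verify is that the minimizing elements $y$ and $x$ actually belong to the admissible spaces $D(\As)$ and $D(\A)$, which is already guaranteed by Lemmas \ref{laxmilgramA} and \ref{laxmilgramAs}. Thus the corollary is a direct specialization of the error equality and no further computation is needed.
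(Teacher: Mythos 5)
Your proposal is correct and follows essentially the same route as the paper: both use the error equality of Theorem \ref{thm:Gmain}, split the combined norm into its two summands, observe that the term $\normdasaomoatmo{y-\psi}^2$ is nonnegative and vanishes exactly at the admissible choice $\psi=y\in D(\As)$ (and symmetrically for $\varphi=x\in D(\A)$), and conclude that the minimum is attained there with the stated value. No gaps.
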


\begin{proof}
We just have to estimate
$$\normdaidaoat{x-\xt}^2
\leq\tnorm{(x,y)-(\xt,\yt)}^2
=\M(\xt,\yt)$$
and note that the left hand side does not depend on $\yt\in D(\As)$.
Setting $\psi:=\yt\in D(\As)$ we get
$$\normdaidaoat{x-\xt}^2
\leq\inf_{\psi\in D(\As)}\M(\xt,\psi).$$
But for $\psi=y\in D(\As)$ we see $\M(\xt,y)=\normdaidaoat{x-\xt}^2$,
which proves (i). Analogously, we estimate
$$\normdasaomoatmo{y-\yt}^2
\leq\tnorm{(x,y)-(\xt,\yt)}^2
=\M(\xt,\yt)$$
and note that the left hand side does not depend on $\xt\in D(\A)$.
Setting $\varphi:=\xt\in D(\A)$ we get
$$\normdasaomoatmo{y-\yt}^2
\leq\inf_{\varphi\in D(\A)}\M(\varphi,\yt).$$
But for $\varphi=x\in D(\A)$ we see $\M(x,\yt)=\normdasaomoatmo{y-\yt}^2$,
which shows (ii).
\end{proof}

\begin{rem}
\label{laxmilgramAsrem}
\mbox{}
\begin{itemize}
\item[\bf(i)]
Since $y\,\bot_{\alphat^{-1}}\,N(\As)$ by \eqref{varAs} we get immediately $y\in\alphat\ol{R(\A)}$
by the Helmholtz decomposition $\hit=N(\As)\oplus_{\alphat^{-1}}\alphat\ol{R(\A)}$.
\item[\bf(ii)] 
If $\alphao^{-1}f\in D(\A)$ we have $z:=\alphao^{-1}\As y\in D(\A)$ 
and the strong and mixed formulations of \eqref{varAs} read
\begin{align*}
\A\alphao^{-1}\As y+\alphat^{-1}y&=\A\alphao^{-1}f,\\
\A z+\alphat^{-1}y&=\A\alphao^{-1}f,\quad \alphao^{-1}\As y=z.
\end{align*}
Then for all $\varphi\in D(\A)$ we have
\begin{align*}
\scphitat{\A z}{\A\varphi}+\scphioao{z}{\varphi}
&=-\scphit{y}{\A\varphi}+\scphioao{z}{\varphi}+\scphitat{\A\alphao^{-1}f}{A\varphi}\\
&=\scphitat{\A\alphao^{-1}f}{A\varphi}
\end{align*}
and hence $z\in\big(D(\A)\cap\alphao^{-1} R(\As)\big)\subset D(\A)$ 
is the unique solution of this variational problem.
Furthermore, $\alphat(\A z-\A\alphao^{-1}f)\in D(\As)$ 
and $\As\alphat(\A z-\A\alphao^{-1}f)=-\alphao z$.
If $\alphat\A\alphao^{-1}f$ belongs to $D(\As)$ this yields 
$\alphat\A z\in D(\As)$ and the strong equation
$$\As\alphat\A z+\alphao z=\As\alphat\A\alphao^{-1}f.$$
\end{itemize}
\end{rem}

Our error equalities may also be used to compute 
the radius of the indeterminacy set of solutions 
in terms of the radius of the indeterminacy set of right hand sides.
Often the right hand $f$ of a problem is not known exactly  
but known to belong to an indeterminacy ball around some known mean data $\hat{f}$.
Let us write $f=\hat{f}+f_{\mathtt{osc}}$.
Since the solution operator $L$ from Remark \ref{rem:isometrygen} is an isometry, 
we have for the solutions $(x,y)=(\hat{x},\hat{y})+(x_{\mathtt{osc}},y_{\mathtt{osc}})$
$$\tnorm{(x_{\mathtt{osc}},y_{\mathtt{osc}})}
=\tnorm{Lf_{\mathtt{osc}}}
=\normhioaomo{f_{\mathtt{osc}}}.$$
Hence, the solutions belong to a ball of the same radius as the data.
In other words, any modeling error is mapped to an error of same size.
If the magnitude of the oscillating part $f_{\mathtt{osc}}$ is known, we also
know the magnitude of variations of the solution set.

\subsection{Application to Time Discretization}

One main application of our error equalities might be that equations of the type
\begin{align}
\label{AsAeqapp}
\As\alphat\A x+\alphao x=f
\end{align}
naturally occur in many types of time discretizations for plenty of linear wave propagation models.
A large class of wave propagation models, like electro-magnetics, acoustics or elasticity, have the structure 
$$(\p_{t}\Lambda^{-1}+\Max)\begin{bmatrix}x\\y\end{bmatrix}
=\begin{bmatrix}g\\h\end{bmatrix},\quad
\Max=\begin{bmatrix}0&-\As\\\A&0\end{bmatrix},\quad
\Lambda=\begin{bmatrix}\lambda_{1}&0\\0&\lambda_{2}\end{bmatrix}$$
or
\begin{align}
\label{dteq}
\p_{t}\lambda_{1}^{-1}x-\As y=g,\quad
\p_{t}\lambda_{2}^{-1}y+\A x=h
\end{align}
with initial condition $(x,y)(0)=(x_{0},y_{0})$.
Often the material is assumed to be time-independent, i.e., $\Lambda$ does not depend on time.
In this case $i\Lambda\Max$ is selfadjoint in the proper Hilbert spaces 
and the solution theory follows immediately by the spectral theorem.
We note that formally the second order wave equation 
$$\big(\p_{t}^2-(\Lambda\Max)^2\big)\begin{bmatrix}x\\y\end{bmatrix}
=(\p_{t}-\Lambda\Max)\Lambda\begin{bmatrix}g\\h\end{bmatrix},\quad
(\Lambda\Max)^2
=\begin{bmatrix}-\lambda_{1}\As\lambda_{2}\A&0\\0&-\lambda_{2}\A\lambda_{1}\As\end{bmatrix}$$
holds. A standard implizit time discretization for \eqref{dteq} is e.g. the backward Euler scheme, i.e.,
$$\delta_{n}^{-1}\lambda_{1}^{-1}(x_{n}-x_{n-1})-\As y_{n}=g_{n},\quad
\delta_{n}^{-1}(y_{n}-y_{n-1})+\lambda_{2}\A x_{n}=\lambda_{2}h_{n},\quad
\delta_{n}:=t_{n}-t_{n-1}.$$
Hence, we obtain e.g. for $x_{n}$
$$\As\lambda_{2}\A x_{n}+\delta_{n}^{-2}\lambda_{1}^{-1}x_{n}
=f_{n}:=\As(\lambda_{2}h_{n}+\delta_{n}^{-1}y_{n-1})+\delta_{n}^{-2}\lambda_{1}^{-1}x_{n-1} + \delta_n^{-1} g_n$$
provided that $\lambda_{2}h_{n}\in D(\As)$.
Therefore \eqref{AsAeqapp} holds for $x_{n}$ with e.g. 
$\alphao=\delta_{n}^{-2}\lambda_{1}^{-1}$ and $\alphat=\lambda_{2}$.
Of course, a similar equation holds for $y_{n}$ as well.
We note that our arguments extend to `all' practically used time discretizations.

Functional a posteriori error estimates for wave equations can be found 
in \cite{repinapostwave,paulyrepinrossihypmax}.


\section{Applications} \label{sec:A}

We will discuss some standard applications.
Let $\om\subset\reals^d$, $d\geq1$.
Since we want to handle mixed boundary conditions, 
let us assume for simplicity, that $\om$ is a bounded or an exterior domain 
with (compact) Lipschitz continuous boundary $\ga$.
Moreover, let $\gad$ be an open subset of $\ga$ and $\gan:=\ga\setminus\ol{\gad}$
its complement. We will denote by $n$ the outward unit normal of the boundary. 
The results presented in this section are direct consequences of Theorem \ref{thm:Gmain}
and, of course, Lemmas \ref{laxmilgramA}, \ref{laxmilgramAs}
and Remarks \ref{rem:Gbehavior}, \ref{rem:isometrygen}
as well as Corollary \ref{cor:maintheocor} hold for all special applications.


\subsection{Reaction-Diffusion} \label{subsec:RD}

Find the scalar potential $u\in\ho$, such that
\begin{align}
-\div\Amat\na u+\rho\,u&=f&
\textrm{in }&\om,\nonumber\\
u&=0&
\textrm{on }&\gad,\label{eq:RD}\\
n\cdot\Amat\na u&=0&
\textrm{on }&\gan\nonumber.
\end{align}
The quadratic diffusion matrix $\Amat\in\li$ is symmetric, real valued and uniformly positive definite. 
The reaction coefficient $\rho\ge\rho_0>0$ belongs to $\li$
and the source $f$ to $\lt$. The dual variable for this problem is the flux $p=\Amat\na u\in\d$.
We need more Sobolev spaces
$$\hogad:=\ol{\cigad}^{\ho},\quad
\dgan:=\ol{\cigan}^{\d},\quad
\dzgan:=\set{\psi\in\dgan}{\div\psi=0},$$
where $\cigad$ resp. $\cigan$ are smooth test functions resp. vector fields 
having supports bounded away from $\gad$ resp. $\gan$.
The following table shows the relation to the notation of Section \ref{sec:G}.
\begin{center}\begin{tabular}{c|c||c|c||c|c||c|c}
$\alphao$ & $\alphat$ & $\A$ & $\As$ & $\hio$ & $\hit$ & $D(\A)$ & $D(\As)$ \\
\hline
$\rho$ & $\Amat$ & $\na$ & $-\div$ & $\lt$ & $\lt$ & $\hogad$ & $\dgan$
\end{tabular}\end{center}
We note that indeed $D(\As)=\dgan$ holds for Lipschitz domains, see e.g. \cite{jochmanncompembmaxmixbc},
which is not trivial at all. The relation \eqref{partint} reads now
$$\forall\,\varphi\in\hogad\quad\forall\,\psi\in\dgan\qquad
\scplt{\na\varphi}{\psi}=-\scplt{\varphi}{\div\psi}.$$
Considering the norms we have
\begin{align*}
\norm{u}_{\ho,\rho,\Amat}^2
&=\norm{u}_{\lt,\rho}^2
+\norm{\na u}_{\lt,\Amat}^2,\\
\norm{p}_{\d,\rho^{-1},\Amat^{-1}}^2
&=\norm{p}_{\lt,\Amat^{-1}}^2
+\norm{\div p}_{\lt,\rho^{-1}}^2,\\
\tnorm{(u,p)}^2
&=\norm{u}_{\ho,\rho,\Amat}^2
+\norm{p}_{\d,\rho^{-1},\Amat^{-1}}^2.
\end{align*}
Now \eqref{eq:RD} reads: Find $u\in\hogad$ with $\Amat\na u\in\dgan$ such that
\begin{align}
\label{formapprd}
-\div\Amat\na u+\rho\,u=f.
\end{align}
Equivalently, in mixed formulation we have: Find $(u,p)\in\hogad\times\dgan$ such that
\begin{align}
\label{mixedformapprd}
-\div p+\rho\,u=f,\quad\Amat\na u=p.
\end{align}
The primal and dual variational problems are: Find $(u,p)\in\hogad\times\dgan$ such that
\begin{align*}
\forall\,\varphi&\in\hogad&
\scp{\na u}{\na\varphi}_{\lt,\Amat}+\scp{u}{\varphi}_{\lt,\rho}
&=\scplt{f}{\varphi},\\
\forall\,\psi&\in\dgan&
\scp{\div p}{\div\psi}_{\lt,\rho^{-1}}+\scp{p}{\psi}_{\lt,\Amat^{-1}}
&=-\scp{f}{\div\psi}_{\lt,\rho^{-1}}.
\end{align*}

\begin{theo} 
\label{thm:RD}
Let $(u,p),(\ut,\pt)\in\hogad\times\dgan$ 
be the exact solution of \eqref{mixedformapprd} 
and any approximation, respectively. Then
$$\tnorm{(u,p)-(\ut,\pt)}^2
=\Mrd(\ut,\pt),\quad
\frac{\tnorm{(u,p)-(\ut,\pt)}^2}{\tnorm{(u,p)}^2}
=\frac{\Mrd(\ut,\pt)}{\norm{f}_{\lt,\rho^{-1}}^2}$$
hold, where $\Mrd(\ut,\pt)=\norm{f-\rho\ut+\div\pt}_{\lt,\rho^{-1}}^2
+\norm{\pt-\Amat\na\ut}_{\lt,\Amat^{-1}}^2$.
\end{theo}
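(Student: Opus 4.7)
The plan is to recognize Theorem \ref{thm:RD} as a direct specialization of the abstract Theorem \ref{thm:Gmain} under the dictionary supplied in the table preceding the statement. That is, I would identify $\hio=\hit=\lt$, the operators $\A=\na$ on $D(\A)=\hogad$ and $\As=-\div$ on $D(\As)=\dgan$, and the weights $\alphao=\rho$, $\alphat=\Amat$. Once these identifications are made, the claim of the theorem is exactly \eqref{eq:Gmain1} and \eqref{eq:Gmain2}, because
\begin{align*}
\normhioaomo{f-\alphao\xt-\As\yt}^2+\normhitatmo{\yt-\alphat\A\xt}^2
=\norm{f-\rho\ut+\div\pt}_{\lt,\rho^{-1}}^2+\norm{\pt-\Amat\na\ut}_{\lt,\Amat^{-1}}^2
=\Mrd(\ut,\pt),
\end{align*}
and the weighted mixed norm $\tnorm{(\,\cdot\,,\,\cdot\,)}$ reduces to the one written explicitly just before the theorem.

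To justify invoking Theorem \ref{thm:Gmain}, I would verify the abstract hypotheses. The operator $\na:\hogad\subset\lt\to\lt$ is densely defined, linear, and closed (as the graph-norm closure of the classical gradient on $\cigad$). Its Hilbert-space adjoint is the distributional $-\div$ on a certain domain; for a bounded or exterior Lipschitz domain with the mixed decomposition $\ga=\ol{\gad}\cup\ol{\gan}$ one has $D((\na)^{*})=\dgan$, the space of $\lt$-vector fields with $\lt$-divergence satisfying the homogeneous normal trace condition on $\gan$ in the weak sense. This is precisely the (non-trivial) identification that the paper has already flagged and cited from \cite{jochmanncompembmaxmixbc}. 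The integration-by-parts identity \eqref{partint} becomes the displayed Green formula between $\hogad$ and $\dgan$. The coefficients $\rho\in\li$ with $\rho\ge\rho_{0}>0$ and the symmetric, uniformly positive definite $\Amat\in\li$ act as multiplication operators; both are bounded, self-adjoint, and have bounded inverses on $\lt$, so they are topological isomorphisms of the required type.

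With these identifications in place, Lemma \ref{laxmilgramA} yields the unique primal solution $u\in\hogad$ of \eqref{formapprd} and Lemma \ref{laxmilgramAs} yields the corresponding dual variable $p=\Amat\na u\in\dgan$, so that $(u,p)\in\hogad\times\dgan$ satisfies the mixed system \eqref{mixedformapprd}. Applying \eqref{eq:Gmain1} to any conforming pair $(\ut,\pt)\in\hogad\times\dgan$ immediately gives $\tnorm{(u,p)-(\ut,\pt)}^2=\Mrd(\ut,\pt)$, while the normalized version follows from \eqref{eq:Gmain2} together with the isometry identity $\tnorm{(u,p)}=\normhioaomo{f}=\norm{f}_{\lt,\rho^{-1}}$ of Remark \ref{rem:isometrygen}.

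The only step with any substantive content is the identification $D(\As)=\dgan$ for Lipschitz mixed-boundary settings; everything else is bookkeeping of the dictionary and a direct appeal to Theorem \ref{thm:Gmain}. Since the paper already points to \cite{jochmanncompembmaxmixbc} for that fact, I would simply cite it and conclude.
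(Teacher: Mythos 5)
Your proposal is correct and follows exactly the route the paper intends: Section \ref{sec:A} explicitly states that the application results are direct consequences of Theorem \ref{thm:Gmain} via the identification table, with the non-trivial fact $D(\As)=\dgan$ delegated to \cite{jochmanncompembmaxmixbc}, precisely as you do. Your write-up even spells out the verification of the abstract hypotheses that the paper leaves implicit.
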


\begin{rem}
We note $\norm{u}_{\ho,\rho,\Amat}\leq\norm{f}_{\lt,\rho^{-1}}$ 
and $\norm{p}_{\d,\rho^{-1},\Amat^{-1}}\leq\norm{f}_{\lt,\rho^{-1}}$ and indeed
$$\tnorm{(u,p)}=\norm{f}_{\lt,\rho^{-1}}.$$
The solution operator $L:\lt\to\hogad\times\dgan;f\mapsto(u,p)$ is an isometry, i.e. $\norm{L}=1$.
\end{rem}

\begin{cor}
Theorem \ref{thm:RD} provides the well known a posteriori error estimates 
for the primal and dual problems. 
\begin{itemize}
\item[\bf(i)]
For any $\ut\in\hogad$ it holds
$\ds\norm{u-\ut}_{\ho,\rho,\Amat}^2
=\min_{\psi\in\dgan}\Mrd(\ut,\psi)
=\Mrd(\ut,p)$.
\item[\bf(ii)]
For any $\pt\in\dgan$ it holds
$\ds\norm{p-\pt}_{\d,\rho^{-1},\Amat^{-1}}^2
=\min_{\varphi\in\hogad}\Mrd(\varphi,\pt)
=\Mrd(u,\pt)$.
\end{itemize}
\end{cor}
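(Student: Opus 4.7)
The plan is to mimic directly the abstract argument used for Corollary \ref{cor:maintheocor}, since the reaction-diffusion setting is just the instantiation identified in the table preceding Theorem \ref{thm:RD}. The combined error equality from Theorem \ref{thm:RD} already gives the full identity; the corollary just extracts the two ``single-variable'' pieces from it.

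For part \textbf{(i)}, I would start from Theorem \ref{thm:RD}, which tells us that for every approximation $(\ut,\pt)\in\hogad\times\dgan$,
\begin{equation*}
\norm{u-\ut}_{\ho,\rho,\Amat}^2+\norm{p-\pt}_{\d,\rho^{-1},\Amat^{-1}}^2
=\tnorm{(u,p)-(\ut,\pt)}^2=\Mrd(\ut,\pt).
\end{equation*}
Dropping the non-negative dual-variable summand on the left gives $\norm{u-\ut}_{\ho,\rho,\Amat}^2\le\Mrd(\ut,\pt)$. Since the left side is independent of $\pt$, I can replace $\pt$ by an arbitrary $\psi\in\dgan$ and take the infimum on the right to obtain $\norm{u-\ut}_{\ho,\rho,\Amat}^2\le\inf_{\psi\in\dgan}\Mrd(\ut,\psi)$. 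To see this infimum is attained at $\psi=p$, I would substitute $(\ut,\pt)=(\ut,p)$ into the displayed equality above; the term $\norm{p-p}_{\d,\rho^{-1},\Amat^{-1}}^2$ vanishes, leaving $\Mrd(\ut,p)=\norm{u-\ut}_{\ho,\rho,\Amat}^2$. The chain of inequalities then collapses to the desired triple equality.

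Part \textbf{(ii)} is fully symmetric: I drop the primal error contribution in the same identity to obtain $\norm{p-\pt}_{\d,\rho^{-1},\Amat^{-1}}^2\le\Mrd(\varphi,\pt)$ for every $\varphi\in\hogad$, so $\norm{p-\pt}_{\d,\rho^{-1},\Amat^{-1}}^2\le\inf_{\varphi\in\hogad}\Mrd(\varphi,\pt)$, and setting $\varphi=u$ in Theorem \ref{thm:RD} makes $\norm{u-u}_{\ho,\rho,\Amat}^2=0$, so $\Mrd(u,\pt)=\norm{p-\pt}_{\d,\rho^{-1},\Amat^{-1}}^2$ realizes the minimum.

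There is essentially no obstacle: all the analytic work (variational setup, $\A^{\ast}=-\div$ on $\dgan$, the key partial-integration identity, and the error equality itself) has already been done in Sections \ref{sec:G} and in Theorem \ref{thm:RD}. The only care point is to note that $(u,p)\in\hogad\times\dgan$ is a \emph{legitimate} choice when plugging in $\psi=p$ or $\varphi=u$, which is precisely the regularity provided by the mixed formulation \eqref{mixedformapprd}. Structurally this proof is a verbatim transcription of the proof of Corollary \ref{cor:maintheocor} under the dictionary $\A\leftrightarrow\na$, $\As\leftrightarrow-\div$, $\alphao\leftrightarrow\rho$, $\alphat\leftrightarrow\Amat$, $D(\A)\leftrightarrow\hogad$, $D(\As)\leftrightarrow\dgan$, so no new idea is needed.
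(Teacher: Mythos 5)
Your proposal is correct and follows essentially the same route as the paper: the corollary is stated there as a direct instantiation of Corollary \ref{cor:maintheocor}, whose proof drops the complementary error term in the equality, notes the independence of the remaining side from the free variable, passes to the infimum, and verifies attainment at the exact dual (resp.\ primal) solution. Your only addition --- explicitly checking that $p\in\dgan$ and $u\in\hogad$ are admissible substitutions via the mixed formulation \eqref{mixedformapprd} --- is a correct and sensible care point that the paper leaves implicit.
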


\begin{rem}
We have $p=\Amat\na u\in\dgan\cap\,\Amat\na\hogad$ and $u$ and $(u,p)$ 
solve \eqref{formapprd} and \eqref{mixedformapprd}, respectively.
Moreover, $\div p+f\in\rho\hogad$
with $\na\rho^{-1}(\div p+f)=\Amat^{-1}p\in\na\hogad=\rzgad\cap\,\harmdigadgan^{\bot}$.
Hence, for $f\in\rho\ho$ we have $\div p\in\rho\ho$ and therefore
the strong and mixed formulations of the dual problem
\begin{align*}
-\na\rho^{-1}\div p+\Amat^{-1}p&=\na\rho^{-1}f&
&&
\textrm{in }&\om,\\
\na v+\Amat^{-1}p&=\na\rho^{-1}f,&
-\rho^{-1}\div p&=v&
\textrm{in }&\om
\intertext{hold, which are completed by the equations}
\div p+f&=0&
&&
\textrm{on }&\gad,\\
n\cdot p&=0&
&&
\textrm{on }&\gan,\\
\rot\Amat^{-1}p&=0&
&&
\textrm{in }&\om,\\
n\times\Amat^{-1}p&=0&
&&
\textrm{on }&\gad,\\
\Amat^{-1}p&\,\,\bot\,\,\harmdigadgan.
\end{align*}
Here the Dirichlet-Neumann fields $\harmdigadgan$ and
the space $\rzgad$ will be defined in Section \ref{subsec:EC}.
Of course, $\rho v=f$ on $\gad$
and by $\rho v\in\div\dgan$ we also have $\rho v\bot\,\reals$ if $\ga=\gan$.
\end{rem}

Related results and numerical tests for exterior domains can be found in e.g.
\cite{paulyrepinell,malimuzalevskiypaulyextell}.


\subsection{Eddy-Current (3D)} \label{subsec:EC}

Let $d=3$. The problem reads: Find the electric field $E\in\r$ such that
\begin{align}
\rot\mu^{-1}\rot E+\eps E&=J&
\textrm{in }&\om,\nonumber\\
n\times E&=0&
\textrm{on }&\gad,\label{eq:EC}\\
n\times\mu^{-1}\rot E&=0&
\textrm{on }&\gan,\nonumber
\end{align}
where
$$\r:=\set{\Phi\in\lt}{\rot\Phi\in\lt},\quad
\rz:=\set{\Phi\in\r}{\rot\Phi=0}.$$
We assume that the magnetic permeability $\mu$ and the electric permittivity $\eps$ 
are symmetric, real valued and uniformly positive definite matrices 
from $\li$. Of course, the extension to complex valued matrices is straight forward.
The electric current $J$ belongs to $\lt$. 
The dual variable for this problem is the magnetic field $H=\mu^{-1}\rot E\in\r$.
We define the Sobolev spaces
$$\rgad:=\ol{\cigad}^{\r},\quad
\rzgad:=\set{\Phi\in\rgad}{\rot\Phi=0}$$
and analogously $\rgan$ and $\rzgan$.
Moreover, we introduce the co-called Dirichlet-Neumann and Neumann-Dirichlet fields by
\begin{align*}
\harmdigadgan&:=\rzgad\cap\dzgan=\set{\Psi\in\rgad\cap\dgan}{\rot\Psi=0\,\wedge\,\div\Psi=0},\\
\harmdigangad&:=\rzgan\cap\dzgad=\set{\Psi\in\rgan\cap\dgad}{\rot\Psi=0\,\wedge\,\div\Psi=0},
\end{align*}
respectively. The following table shows the relation to the notation of Section \ref{sec:G}.
\begin{center}\begin{tabular}{c|c||c|c||c|c||c|c}
$\alphao$ & $\alphat$ & $\A$ & $\As$ & $\hio$ & $\hit$ & $D(\A)$ & $D(\As)$\\
\hline
$\eps$ & $\mu^{-1}$ & $\rot$ & $\rot$ & $\lt$ & $\lt$ & $\rgad$ & $\rgan$
\end{tabular}\end{center}
We note that indeed $D(\As)=\rgan$ holds for Lipschitz domains, see e.g. \cite{jochmanncompembmaxmixbc},
which is not trivial at all. The relation \eqref{partint} reads now
$$\forall\,\Phi\in\rgad\quad\forall\,\Psi\in\rgan\qquad
\scplt{\rot\Phi}{\Psi}=\scplt{\Phi}{\rot\Psi}.$$
Considering the norms we have
\begin{align*}
\norm{E}_{\r,\eps,\mu^{-1}}^2
&=\norm{E}_{\lt,\eps}^2
+\norm{\rot E}_{\lt,\mu^{-1}}^2,\\
\norm{H}_{\r,\eps^{-1},\mu}^2
&=\norm{H}_{\lt,\mu}^2
+\norm{\rot H}_{\lt,\eps^{-1}}^2,\\
\tnorm{(E,H)}^2
&=\norm{E}_{\r,\eps,\mu^{-1}}^2
+\norm{H}_{\r,\eps^{-1},\mu}^2.
\end{align*}
Now \eqref{eq:EC} reads: Find $E\in\rgad$ with $\mu^{-1}\rot E\in\rgan$ such that
$$\rot\mu^{-1}\rot E+\eps E=J.$$
In mixed formulation we have: Find $(E,H)\in\rgad\times\rgan$ such that
$$\rot H+\eps E=J,\quad\mu^{-1}\rot E=H.$$
The primal and dual variational problems are: Find $(E,H)\in\rgad\times\rgan$ such that
\begin{align*}
\forall\,\Phi&\in\rgad&
\scp{\rot E}{\rot\Phi}_{\lt,\mu^{-1}}+\scp{E}{\Phi}_{\lt,\eps}
&=\scplt{J}{\Phi},\\
\forall\,\Psi&\in\rgan&
\scp{\rot H}{\rot\Psi}_{\lt,\eps^{-1}}+\scp{H}{\Psi}_{\lt,\mu}
&=\scp{J}{\rot\Psi}_{\lt,\eps^{-1}}.
\end{align*}

\begin{theo} 
\label{thm:EC}
For any approximation $(\Et,\Ht)\in\rgad\times\rgan$ 
$$\tnorm{(E,H)-(\Et,\Ht)}^2
=\Mec(\Et,\Ht),\quad
\frac{\tnorm{(E,H)-(\Et,\Ht)}^2}{\tnorm{(E,H)}^2}
=\frac{\Mec(\Et,\Ht)}{\norm{J}_{\lt,\eps^{-1}}^2}$$
hold, where $\Mec(\Et,\Ht)=\norm{J-\eps\Et-\rot\Ht}_{\lt,\eps^{-1}}^2
+\norm{\Ht-\mu^{-1}\rot\Et}_{\lt,\mu}^2$.
\end{theo}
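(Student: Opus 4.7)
The plan is to apply the general Theorem \ref{thm:Gmain} under the identifications summarized in the table preceding the statement: $\alphao = \eps$, $\alphat = \mu^{-1}$, $\A = \As = \rot$, $\hio = \hit = \lt$, $D(\A) = \rgad$, $D(\As) = \rgan$, with source $f = J$, primal variable $x = E$ and dual variable $y = H$. Under these identifications the abstract strong equation $\As\alphat\A x + \alphao x = f$ becomes $\rot\mu^{-1}\rot E + \eps E = J$, and the abstract mixed system \eqref{mixedformulation} reduces to $\rot H + \eps E = J$, $\mu^{-1}\rot E = H$, which is exactly the mixed formulation of \eqref{eq:EC} that is already derived in the text above the statement.

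First I would verify the abstract hypotheses from Section \ref{sec:G}. By definition $\rgad = \ol{\cigad}^{\r}$, so $\rot : \rgad \subset \lt \to \lt$ is densely defined and closed as the closure of a restriction of a closable operator. The coefficients $\eps$ and $\mu^{-1}$ are bounded, symmetric, real valued and uniformly positive definite matrices in $\li$, hence they induce self-adjoint positive topological isomorphisms on $\lt$, so the weighted inner products $\scp{\cdot}{\cdot}_{\lt,\eps}$ and $\scp{\cdot}{\cdot}_{\lt,\mu^{-1}}$ (and their inverses) are equivalent to the standard $\lt$-scalar product. The main obstacle is the identification of the Hilbert-space adjoint of $\rot|_{\rgad}$ with the operator $\rot$ acting on $\rgan$, i.e., the non-trivial equality $D(\As) = \rgan$; for domains with (compact) Lipschitz boundary this is exactly the content of the cited reference \cite{jochmanncompembmaxmixbc}, and is the direct analogue of $D(\As) = \dgan$ used in the reaction-diffusion application.

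Once these ingredients are in place, the theorem follows by direct substitution. The abstract combined norm $\tnorm{(\varphi,\psi)}^2 = \normdaidaoat{\varphi}^2 + \normdasaomoatmo{\psi}^2$ specializes to $\tnorm{(E,H)}^2 = \norm{E}_{\r,\eps,\mu^{-1}}^2 + \norm{H}_{\r,\eps^{-1},\mu}^2$ as already recorded in the text, and the abstract majorant $\M(\xt,\yt) = \normhioaomo{f - \alphao\xt - \As\yt}^2 + \normhitatmo{\yt - \alphat\A\xt}^2$ turns into $\Mec(\Et,\Ht) = \norm{J - \eps\Et - \rot\Ht}_{\lt,\eps^{-1}}^2 + \norm{\Ht - \mu^{-1}\rot\Et}_{\lt,\mu}^2$. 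Applying \eqref{eq:Gmain1} yields the error equality, while \eqref{eq:Gmain2} together with the isometry property $\tnorm{(E,H)} = \norm{J}_{\lt,\eps^{-1}}$ from Remark \ref{rem:isometrygen} produces the normalized version.
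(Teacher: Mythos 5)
Your proposal is correct and follows exactly the route the paper intends: Section \ref{sec:A} states that all application results are direct consequences of Theorem \ref{thm:Gmain} under the identifications in the table ($\alphao=\eps$, $\alphat=\mu^{-1}$, $\A=\As=\rot$, $D(\A)=\rgad$, $D(\As)=\rgan$), with the only non-trivial point being $D(\As)=\rgan$ for Lipschitz domains, which you correctly attribute to the cited reference. Nothing is missing.
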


\begin{rem}
\label{rem:EC}
We note $\norm{E}_{\r,\eps,\mu^{-1}}\leq\norm{J}_{\lt,\eps^{-1}}$ 
and $\norm{H}_{\r,\eps^{-1},\mu}\leq\norm{J}_{\lt,\eps^{-1}}$ and indeed
$$\tnorm{(E,H)}=\norm{J}_{\lt,\eps^{-1}}.$$
The solution operator $L:\lt\to\rgad\times\rgan;f\mapsto(E,H)$ is an isometry, i.e. $\norm{L}=1$.
\end{rem}

\begin{cor}
\label{cor:EC}
Theorem \ref{thm:EC} provides the well known a posteriori error estimates 
for the primal and dual problems. 
\begin{itemize}
\item[\bf(i)]
For any $\Et\in\rgad$ it holds
$\ds\norm{E-\Et}_{\r,\eps,\mu^{-1}}^2
=\min_{\Psi\in\rgan}\Mec(\Et,\Psi)
=\Mec(\Et,H)$.
\item[\bf(ii)]
For any $\Ht\in\rgan$ it holds
$\ds\norm{H-\Ht}_{\r,\eps^{-1},\mu}^2
=\min_{\Phi\in\rgad}\Mec(\Phi,\Ht)
=\Mec(E,\Ht)$.
\end{itemize}
\end{cor}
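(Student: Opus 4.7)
The plan is to mimic exactly the proof of the abstract Corollary \ref{cor:maintheocor}, specialised via the dictionary established in the table before Theorem \ref{thm:EC}. Since Theorem \ref{thm:EC} already identifies the combined error $\tnorm{(E,H)-(\Et,\Ht)}^2$ with the majorant $\Mec(\Et,\Ht)$, and the combined norm splits as
\[
\tnorm{(E,H)-(\Et,\Ht)}^2
=\norm{E-\Et}_{\r,\eps,\mu^{-1}}^2+\norm{H-\Ht}_{\r,\eps^{-1},\mu}^2,
\]
each summand is bounded above by the total. This is the entire conceptual content; no new estimates are needed.

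For part (i), I would first drop the nonnegative dual term and apply Theorem \ref{thm:EC} to get
\[
\norm{E-\Et}_{\r,\eps,\mu^{-1}}^2\le\tnorm{(E,H)-(\Et,\Ht)}^2=\Mec(\Et,\Ht)
\]
for every $\Ht\in\rgan$. Since the left-hand side no longer involves $\Ht$, taking the infimum over $\Psi:=\Ht\in\rgan$ on the right yields
\[
\norm{E-\Et}_{\r,\eps,\mu^{-1}}^2\le\inf_{\Psi\in\rgan}\Mec(\Et,\Psi).
\]
To see the infimum is attained and that equality holds, I would evaluate the majorant at the exact dual solution $\Psi=H\in\rgan$: the splitting of the combined norm shows that $\tnorm{(E,H)-(\Et,H)}^2=\norm{E-\Et}_{\r,\eps,\mu^{-1}}^2$, so by Theorem \ref{thm:EC} applied with $(\xt,\yt)=(\Et,H)$, we obtain $\Mec(\Et,H)=\norm{E-\Et}_{\r,\eps,\mu^{-1}}^2$. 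Hence the infimum is in fact a minimum attained at $\Psi=H$, establishing (i).

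Part (ii) proceeds by the mirror-image argument: bound $\norm{H-\Ht}_{\r,\eps^{-1},\mu}^2\le\Mec(\Et,\Ht)$ for every $\Et\in\rgad$, take the infimum over $\Phi:=\Et\in\rgad$, and then plug in $\Phi=E$ to verify the infimum is attained with value $\Mec(E,\Ht)=\norm{H-\Ht}_{\r,\eps^{-1},\mu}^2$.

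There is really no obstacle here; the hard work has already been done in Theorem \ref{thm:EC}, which itself is an instance of Theorem \ref{thm:Gmain}. The only points worth a sentence are that $H\in\rgan$ and $E\in\rgad$ (so the test functions $\Psi$ and $\Phi$ can indeed be chosen as the exact solution components), and that the functional-analytic identifications $\A=\rot$, $\As=\rot$, $D(\A)=\rgad$, $D(\As)=\rgan$, $\alphao=\eps$, $\alphat=\mu^{-1}$ are as tabulated, so that the proof of Corollary \ref{cor:maintheocor} transfers verbatim.
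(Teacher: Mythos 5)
Your argument is exactly the paper's: Corollary \ref{cor:EC} is the specialisation of Corollary \ref{cor:maintheocor} via the tabulated identifications, and the proof there proceeds precisely by dropping the nonnegative complementary term in the combined norm, taking the infimum over the free argument, and evaluating the majorant at the exact solution component to see the minimum is attained. Your proposal is correct and matches the paper's route, including the remark that $H\in\rgan$ and $E\in\rgad$ are admissible test functions.
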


\begin{rem}
We have $H=\mu^{-1}\rot E\in\rgan\cap\,\mu^{-1}\rot\rgad$ and $E$ and $(E,H)$ 
solve the strong and mixed formulation, respectively.
Moreover, $\rot H-J\in\eps\rgad$
with $\rot\eps^{-1}(\rot H-J)=-\mu H$ 
belonging to $\rot\rgad=\dzgad\cap\,\harmdigangad^{\bot}$.
Hence, for $J\in\eps\r$ we have $\rot H\in\eps\r$ and therefore
the strong and mixed formulations of the dual problem
\begin{align*}
\rot\eps^{-1}\rot H+\mu H&=\rot\eps^{-1}J&
&&
\textrm{in }&\om,\\
\rot D+\mu H&=\rot\eps^{-1}J,&
\eps^{-1}\rot H&=D&
\textrm{in }&\om
\intertext{hold, which are completed by the equations}
n\times\eps^{-1}(\rot H-J)&=0&
&&
\textrm{on }&\gad,\\
n\times H&=0&
&&
\textrm{on }&\gan,\\
\div\mu H&=0&
&&
\textrm{in }&\om,\\
n\cdot\mu H&=0&
&&
\textrm{on }&\gad,\\
\mu H&\,\,\bot\,\,\harmdigangad.
\end{align*}
Of course, $n\times D=n\times\eps^{-1}J$ on $\gad$
and by $\eps D\in\rot\rgan$ we also have
$\div\eps D=0$ in $\om$ and $n\cdot\eps D=0$ on $\gan$
as well as $\eps D\bot\harmdigadgan$.
\end{rem}

Earlier results for eddy current and static Maxwell problems can be found in 
\cite{anjammalimuzalevskiyneittaanmakirepinmaxtype,paulyrepinmaxst}.


\subsection{Eddy-Current (2D)} \label{subsec:EC2D}

Let $d=2$. We just indicate the changes compared to the latter section.
First, we have to understand the double $\rot$ as $\na^\bot\rot$, where
$$\rot E:=\div\R E=\p_1E_2-\p_2E_1,\quad
\na^\bot H:=\R\na H=\begin{bmatrix}\p_2H\\-\p_1H\end{bmatrix},\quad
\R:=\begin{bmatrix}0&1\\-1&0\end{bmatrix}$$
and $E\in\r$ is a vector field and $H\in\ho$ a scalar function. 
In the literature, the operator $\na^\bot$ is often called 
co-gradient or vector rotation $\vec{\rot}$ as well. 
Also $\mu$ is scalar. \eqref{eq:EC} reads: Find the electric field $E\in\r$ such that
\begin{align*}
\na^{\bot}\mu^{-1}\rot E+\eps E&=J&
\textrm{in }&\om,\\
n\times E&=0&
\textrm{on }&\gad,\\
\mu^{-1}\rot E&=0&
\textrm{on }&\gan.
\end{align*}
We have
\begin{center}\begin{tabular}{c|c||c|c||c|c||c|c}
$\alphao$ & $\alphat$ & $\A$ & $\As$ & $\hio$ & $\hit$ & $D(\A)$ & $D(\As)$\\
\hline
$\eps$ & $\mu^{-1}$ & $\rot$ & $\na^{\bot}$ & $\lt$ & $\lt$ & $\rgad$ & $\hogan$
\end{tabular}\end{center}
and \eqref{partint} turns to
$$\forall\,\Phi\in\rgad\quad\forall\,\psi\in\hogan\qquad
\scplt{\rot\Phi}{\psi}=\scplt{\Phi}{\na^{\bot}\psi}.$$
The norm for $H$ is
$$\norm{H}_{\ho,\eps^{-1},\mu}^2
=\norm{H}_{\lt,\mu}^2
+\norm{\na^{\bot}H}_{\lt,\eps^{-1}}^2.$$
The strong formulation of the problem is: 
Find $E\in\rgad$ with $\mu^{-1}\rot E\in\hogan$ such that
$$\na^{\bot}\mu^{-1}\rot E+\eps E=J.$$
The mixed formulation is: Find $(E,H)\in\rgad\times\hogan$ such that
$$\na^{\bot}H+\eps E=J,\quad\mu^{-1}\rot E=H.$$
The primal and dual variational problems are: Find $(E,H)\in\rgad\times\hogan$ such that
\begin{align*}
\forall\,\Phi&\in\rgad&
\scp{\rot E}{\rot\Phi}_{\lt,\mu^{-1}}+\scp{E}{\Phi}_{\lt,\eps}
&=\scplt{J}{\Phi},\\
\forall\,\psi&\in\hogan&
\scp{\na^{\bot}H}{\na^{\bot}\psi}_{\lt,\eps^{-1}}+\scp{H}{\psi}_{\lt,\mu}
&=\scp{J}{\na^{\bot}\psi}_{\lt,\eps^{-1}}.
\end{align*}

Theorem \ref{thm:EC} reads:

\begin{theo} 
\label{thm:EC2D}
For any approximation $(\Et,\Ht)\in\rgad\times\hogan$ 
$$\tnorm{(E,H)-(\Et,\Ht)}^2
=\Mec(\Et,\Ht),\quad
\frac{\tnorm{(E,H)-(\Et,\Ht)}^2}{\tnorm{(E,H)}^2}
=\frac{\Mec(\Et,\Ht)}{\norm{J}_{\lt,\eps^{-1}}^2}$$
hold, where $\Mec(\Et,\Ht)=\norm{J-\eps\Et-\na^{\bot}\Ht}_{\lt,\eps^{-1}}^2
+\norm{\Ht-\mu^{-1}\rot\Et}_{\lt,\mu}^2$.
\end{theo}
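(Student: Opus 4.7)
The plan is to deduce Theorem \ref{thm:EC2D} as a direct specialization of the abstract Theorem \ref{thm:Gmain}, using the identifications given in the table preceding the statement. Concretely, I would take $\hio=\hit=\lt$, the coefficients $\alphao:=\eps$, $\alphat:=\mu^{-1}$, and the operators $\A:=\rot$ with $D(\A):=\rgad$ and $\As:=\na^{\bot}$ with $D(\As):=\hogan$; the primal and dual variables then correspond to $x:=E$ and $y:=H$, and the data to $f:=J$. Once this dictionary is set up, the claimed equality is literally \eqref{eq:Gmain1}–\eqref{eq:Gmain2}, since the majorant $\Mec(\Et,\Ht)$ coincides verbatim with $\M(\xt,\yt)$ in \eqref{eq:Gmain} under the above substitutions.

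To justify the application of Theorem \ref{thm:Gmain}, the key structural facts I would check in order are: first, that $\eps$ and $\mu^{-1}$, being real, symmetric, uniformly positive definite matrix multipliers in $\li$ (with $\mu$ scalar in the 2D case), act as linear, self-adjoint, positive topological isomorphisms on $\lt$; second, that $\rot:\rgad\subset\lt\to\lt$ is densely defined (containing $\cigad$) and closed as the restriction of the graph-closed scalar rotation operator; and third, that its Hilbert-space adjoint is precisely $\na^{\bot}:\hogan\subset\lt\to\lt$. The integration-by-parts identity
\[
\forall\,\Phi\in\rgad\ \forall\,\psi\in\hogan\qquad
\scplt{\rot\Phi}{\psi}=\scplt{\Phi}{\na^{\bot}\psi}
\]
stated right before the theorem gives the inclusion $\na^{\bot}|_{\hogan}\subset(\rot|_{\rgad})^{*}$; equality of domains, i.e. $D(\As)=\hogan$ for Lipschitz $\om$ with the mixed boundary partition $\ga=\ol{\gad}\cup\ol{\gan}$, is exactly the point where I would invoke the result of Jochmann cited in the paper, in the form used already in Subsection \ref{subsec:EC}. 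This identification of $D(\As)$ is the one nontrivial step; everything else is bookkeeping.

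With these identifications in hand, the existence and uniqueness of a solution $(E,H)\in\rgad\times\hogan$ of the mixed formulation $\na^{\bot}H+\eps E=J$, $\mu^{-1}\rot E=H$ follow from Lemmas \ref{laxmilgramA} and \ref{laxmilgramAs}, and the isometry $\tnorm{(E,H)}=\norm{J}_{\lt,\eps^{-1}}$ follows from Remark \ref{rem:isometrygen}. Applying Theorem \ref{thm:Gmain} to any conforming $(\Et,\Ht)\in\rgad\times\hogan$ yields
\[
\tnorm{(E,H)-(\Et,\Ht)}^2
=\norm{J-\eps\Et-\na^{\bot}\Ht}_{\lt,\eps^{-1}}^2
+\norm{\Ht-\mu^{-1}\rot\Et}_{\lt,\mu}^2
=\Mec(\Et,\Ht),
\]
which is the first equality of the theorem. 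Dividing by $\tnorm{(E,H)}^2=\norm{J}_{\lt,\eps^{-1}}^2$ gives the normalized form, finishing the proof. I expect no genuine obstacle beyond the adjoint identification in the second step; the rest is a transcription of the abstract result into the 2D eddy current notation.
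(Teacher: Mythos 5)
Your proposal is correct and matches the paper's approach exactly: the paper states at the start of Section \ref{sec:A} that all results there are direct consequences of Theorem \ref{thm:Gmain} via the identification tables, and your dictionary ($\alphao=\eps$, $\alphat=\mu^{-1}$, $\A=\rot$ on $\rgad$, $\As=\na^{\bot}$ on $\hogan$) together with the Jochmann-type identification of $D(\As)$ is precisely the intended argument. Nothing is missing.
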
 

\begin{rem}
\label{rem:EC2D}
We note $\norm{E}_{\r,\eps,\mu^{-1}}\leq\norm{J}_{\lt,\eps^{-1}}$ 
and $\norm{H}_{\ho,\eps^{-1},\mu}\leq\norm{J}_{\lt,\eps^{-1}}$ and indeed
$$\tnorm{(E,H)}=\norm{J}_{\lt,\eps^{-1}}.$$
The solution operator $L:\lt\to\rgad\times\hogan;f\mapsto(E,H)$ is an isometry, i.e. $\norm{L}=1$.
\end{rem}

\begin{cor}
\label{cor:EC2D}
Theorem \ref{thm:EC} provides the well known a posteriori error estimates 
for the primal and dual problems. 
\begin{itemize}
\item[\bf(i)]
For any $\Et\in\rgad$ it holds
$\ds\norm{E-\Et}_{\r,\eps,\mu^{-1}}^2
=\min_{\psi\in\hogan}\Mec(\Et,\psi)
=\Mec(\Et,H)$.
\item[\bf(ii)]
For any $\Ht\in\hogan$ it holds
$\ds\norm{H-\Ht}_{\ho,\eps^{-1},\mu}^2
=\min_{\Phi\in\rgad}\Mec(\Phi,\Ht)
=\Mec(E,\Ht)$.
\end{itemize}
\end{cor}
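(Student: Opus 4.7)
The plan is to recognize this corollary as a direct specialization of the abstract Corollary \ref{cor:maintheocor} to the 2D eddy-current setting already set up in Theorem \ref{thm:EC2D}. Concretely, I would invoke the dictionary
$$\alphao = \eps,\quad \alphat = \mu^{-1},\quad \A = \rot,\quad \As = \na^\bot,\quad D(\A) = \rgad,\quad D(\As) = \hogan,$$
under which the weighted abstract norms $\normdaidaoat{\,\cdot\,}$ and $\normdasaomoatmo{\,\cdot\,}$ become precisely $\norm{\,\cdot\,}_{\r,\eps,\mu^{-1}}$ and $\norm{\,\cdot\,}_{\ho,\eps^{-1},\mu}$, and the abstract majorant $\M(\xt,\yt)$ becomes $\Mec(\Et,\Ht)$. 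Parts (i) and (ii) of Corollary \ref{cor:maintheocor} then translate verbatim into parts (i) and (ii) of the present statement.

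If one prefers a self-contained argument working directly from Theorem \ref{thm:EC2D}, I would proceed in three short steps, mirroring the proof of Corollary \ref{cor:maintheocor}. First, for any $\Et\in\rgad$ and any $\Ht\in\hogan$, the trivial estimate
$$\norm{E-\Et}_{\r,\eps,\mu^{-1}}^2 \leq \tnorm{(E,H)-(\Et,\Ht)}^2 = \Mec(\Et,\Ht)$$
holds by Theorem \ref{thm:EC2D}. Second, I would observe that the left-hand side is independent of $\Ht$, so taking the infimum over $\Ht=\psi\in\hogan$ yields
$$\norm{E-\Et}_{\r,\eps,\mu^{-1}}^2 \leq \inf_{\psi\in\hogan}\Mec(\Et,\psi).$$
Third, I would verify that this bound is attained at $\psi:=H\in\hogan$: inserting $\Ht = H$ into the defining expression of $\Mec$ and using the mixed formulation $\na^\bot H + \eps E = J$ together with $\mu^{-1}\rot E = H$ collapses $\Mec(\Et,H)$ to $\norm{E-\Et}_{\r,\eps,\mu^{-1}}^2$, proving (i). Part (ii) follows by the symmetric argument, replacing $\Et$ by $E$ and taking the infimum over $\Phi\in\rgad$.

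Neither route involves any real obstacle; the only point requiring a modest amount of care is checking that the scalar-valued vector-rotation setup $(\A,\As)=(\rot,\na^\bot)$ with $D(\As)=\hogan$ genuinely provides an adjoint pair, so that the abstract framework of Section \ref{sec:G} applies. This is exactly the integration-by-parts identity
$$\forall\,\Phi\in\rgad\quad\forall\,\psi\in\hogan\qquad \scplt{\rot\Phi}{\psi}=\scplt{\Phi}{\na^{\bot}\psi},$$
which has already been recorded right before Theorem \ref{thm:EC2D}, together with the density/closedness properties needed for $\A^{**}=\A$. With that in place, the corollary is immediate.
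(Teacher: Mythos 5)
Your proposal is correct and matches the paper's intent exactly: the paper gives no separate proof for this corollary, treating it (as announced at the start of Section \ref{sec:A}) as a direct instance of Corollary \ref{cor:maintheocor} under the dictionary $(\alphao,\alphat,\A,\As,D(\A),D(\As))=(\eps,\mu^{-1},\rot,\na^\bot,\rgad,\hogan)$, and your self-contained variant reproduces verbatim the three-step argument used to prove Corollary \ref{cor:maintheocor} (trivial bound from the error equality, independence of the left-hand side from the free variable, attainment at the exact solution via the mixed formulation). Nothing is missing.
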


\begin{rem}
We have again $H=\mu^{-1}\rot E\in\hogan\cap\,\mu^{-1}\rot\rgad$ and
as in the 3D case $E$ and $(E,H)$ 
solve the strong and mixed formulation, respectively.
Moreover, $\na^{\bot}H-J\in\eps\rgad$
with $\rot\eps^{-1}(\na^{\bot}H-J)=-\mu H$.
Hence, for $J\in\eps\r$ we have $\na^{\bot}H\in\eps\r$ and therefore
the strong and mixed formulations of the dual problem
\begin{align*}
\rot\eps^{-1}\na^{\bot}H+\mu H&=\rot\eps^{-1}J&
&&
\textrm{in }&\om,\\
\rot D+\mu H&=\rot\eps^{-1}J,&
\eps^{-1}\na^{\bot}H&=D&
\textrm{in }&\om
\intertext{hold, which are completed by the equations}
n\times\eps^{-1}(\na^{\bot}H-J)&=0&
&&
\textrm{on }&\gad,\\
H&=0&
&&
\textrm{on }&\gan,\\
\mu H&\,\,\bot\,\,\reals\quad(\text{if }\gad=\ga).
\end{align*}
Of course, $n\times D=n\times\eps^{-1}J$ on $\gad$
and by $\eps D\in\na^{\bot}\hogan$ we also have
$\div\eps D=0$ in $\om$ and $n\cdot\eps D=0$ on $\gan$
as well as $\eps D\bot\harmdigadgan$.
\end{rem}


\subsection{Linear Elasticity} \label{subsec:LE}

Find the displacement vector field $u\in\ho$ such that
\begin{align}
-\Div\L\nas u+\rho\,u&=f&
\textrm{in }&\om,\nonumber\\
u&=0&
\textrm{on }&\gad,\label{eq:LE}\\
n\cdot\L\nas u&=0&
\textrm{on }&\gan.\nonumber
\end{align}
Here $\nas$ is the symmetric part of the gradient\footnote{Here, 
as usual in elasticity the gradient $\na u$
is to be understood as the Jacobian of the vector field $u$.}
$$\nas u:=\sym\na u=\frac{1}{2}\big(\na u+(\na u)^\top\big),$$
where ${}^\top$ denotes the transpose. 
$\nas u$, often denoted by $\eps(u)$, is also called the infinitesimal strain tensor.
The fourth order stiffness tensor of elastic moduli $\L\in\li$,
mapping symmetric matrices to symmetric matrices point-wise,
and the second order tensor (quadratic matrix) of reaction $\rho$ 
are assumed to be symmetric, real valued and uniformly positive definite.
The vector field $f$ (body force) belongs to $\lt$ and
the dual variable for this problem is the Cauchy stress tensor $\sigma=\L\nas u\in\d$,
where the application of $\Div$ to $\sigma$ and the notation $\sigma\in\d$ 
is to be understood row-wise as the usual divergence $\div$.
We note that the first equation can also be written as
$$-\Divs\L\nas u+\rho\,u=f,\quad\Divs:=\Div\sym.$$
We have:
\begin{center}\begin{tabular}{c|c||c|c||c|c||c|c}
$\alphao$ & $\alphat$ & $\A$ & $\As$ & $\hio$ & $\hit$ & $D(\A)$ & $D(\As)$\\
\hline
$\rho$ & $\L$ & $\nas$ & $-\Divs$ & $\lt$ & $\lt$ & $\hogad$ & $\sym^{-1}\dgan$
\end{tabular}\end{center}
The notation $\sigma\in\sym^{-1}\dgan$ means $\sym\sigma\in\dgan$.
More precisely, $\psi\in D(\As)$ if and only if
$$\forall\,\varphi\in D(\A)=\hogad\qquad
\scplt{\nas\varphi}{\psi}=\scplt{\varphi}{\As\psi}.$$
Since $\scplt{\nas\varphi}{\psi}=\scplt{\na\varphi}{\sym\psi}$
we see that this holds if and only if $\sym\psi\in\dgan$ and $\As\psi=-\Div\sym\psi$.
Equation \eqref{partint} turns into
$$\forall\,\varphi\in\hogad\quad\forall\,\psi\in\sym^{-1}\dgan
\qquad\scplt{\nas\varphi}{\psi}=-\scplt{\varphi}{\Divs\psi}.$$
For the norms we have
\begin{align*}
\norm{u}_{\ho,\rho,\L}^2
&=\norm{u}_{\lt,\rho}^2
+\norm{\nas u}_{\lt,\L}^2,\\
\norm{\sigma}_{\sym^{-1}\d,\rho^{-1},\L^{-1}}^2
&=\norm{\sigma}_{\lt,\L^{-1}}^2
+\norm{\Divs\sigma}_{\lt,\rho^{-1}}^2,\\
\tnorm{(u,\sigma)}^2
&=\norm{u}_{\ho,\rho,\L}^2
+\norm{\sigma}_{\sym^{-1}\d,\rho^{-1},\L^{-1}}^2.
\end{align*}
Now \eqref{eq:LE} reads: Find $u\in\hogad$ with $\sym\L\nas u=\L\nas u\in\dgan$ such that
$$-\Div\L\nas u+\rho\,u=f.$$
In mixed formulation we have: Find $(u,\sigma)\in\hogad\times\dgan$ such that
$$-\Div\sigma+\rho\,u=f,\quad\L\nas u=\sigma.$$
Note that then $\sigma$ is automatically symmetric.
The primal and dual variational problems are: Find $(u,\sigma)\in\hogad\times\sym^{-1}\dgan$ such that
\begin{align*}
\forall\,\varphi&\in\hogad&
\scp{\nas u}{\nas\varphi}_{\lt,\L}+\scp{u}{\varphi}_{\lt,\rho}
&=\scplt{f}{\varphi},\\
\forall\,\psi&\in\sym^{-1}\dgan&
\scp{\Divs\sigma}{\Divs\psi}_{\lt,\rho^{-1}}+\scp{\sigma}{\psi}_{\lt,\L^{-1}}
&=-\scp{f}{\Divs\psi}_{\lt,\rho^{-1}}.
\intertext{Since $\sigma\in\dgan$ must be symmetric, we can formulate the dual problem also as}
\forall\,\psi&\in\dgan,\;\psi\text{ symmetric}&
\scp{\Div\sigma}{\Div\psi}_{\lt,\rho^{-1}}+\scp{\sigma}{\psi}_{\lt,\L^{-1}}
&=-\scp{f}{\Div\psi}_{\lt,\rho^{-1}}.
\end{align*}
Then, the norms reduce to
$$\tnorm{(u,\sigma)}^2
=\norm{u}_{\ho,\rho,\L}^2
+\norm{\sigma}_{\d,\rho^{-1},\L^{-1}}^2,\quad
\norm{\sigma}_{\d,\rho^{-1},\L^{-1}}^2
=\norm{\sigma}_{\lt,\L^{-1}}^2
+\norm{\Div\sigma}_{\lt,\rho^{-1}}^2.$$

\begin{theo} 
\label{thm:LE}
For any approximation $(\ut,\tilde{\sigma})\in\hogad\times\sym^{-1}\dgan$ 
\begin{align}
\label{linelatheoeq}
\tnorm{(u,\sigma)-(\ut,\tilde{\sigma})}^2
=\Mle(\ut,\tilde{\sigma}),\quad
\frac{\tnorm{(u,\sigma)-(\ut,\tilde{\sigma})}^2}{\tnorm{(u,\sigma)}^2}
=\frac{\Mle(\ut,\tilde{\sigma})}{\norm{f}_{\lt,\rho^{-1}}^2}
\end{align}
hold, where $\Mle(\ut,\tilde{\sigma})=\norm{f-\rho\ut+\Divs\tilde{\sigma}}_{\lt,\rho^{-1}}^2
+\norm{\tilde{\sigma}-\L\nas\ut}_{\lt,\L^{-1}}^2$.
Moreover, since $\sigma$ is automatically symmetric we have \eqref{linelatheoeq} for all
$(\ut,\tilde{\sigma})\in\hogad\times\dgan$ with $\tilde{\sigma}$ symmetric
and the right hand side simplifies to
$\Mle(\ut,\tilde{\sigma})=\norm{f-\rho\ut+\Div\tilde{\sigma}}_{\lt,\rho^{-1}}^2
+\norm{\tilde{\sigma}-\L\nas\ut}_{\lt,\L^{-1}}^2$.
\end{theo}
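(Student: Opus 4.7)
My plan is to recognize this theorem as a direct specialization of the abstract Theorem \ref{thm:Gmain} with the identifications collected in the table preceding the statement: $\alphao=\rho$, $\alphat=\L$, $\A=\nas$, $\As=-\Divs$, $\hio=\hit=\lt$, $D(\A)=\hogad$, and $D(\As)=\sym^{-1}\dgan$. The text between the table and the theorem already verifies the nontrivial piece, namely that $\psi\in D(\As)$ if and only if $\sym\psi\in\dgan$ (so $D(\As)=\sym^{-1}\dgan$) together with the integration by parts formula, which is exactly the abstract identity \eqref{partint}. Density and closedness of $\nas$ on $\hogad$ are standard (density follows from $\cigad\subset\hogad$, closedness from the closedness of $\nas$ as a bounded perturbation of $\na$), and positivity/self-adjointness of $\rho$ and $\L$ as multiplication operators on $\lt$ is built into their definitions.

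With these identifications in place, the first equality in \eqref{linelatheoeq} is nothing but \eqref{eq:Gmain1} rewritten in the elasticity notation, and the normalized form is \eqref{eq:Gmain2}; the weighted norms $\norm{\cdot}_{\ho,\rho,\L}$ and $\norm{\cdot}_{\sym^{-1}\d,\rho^{-1},\L^{-1}}$ match the abstract weighted graph norms $\normdaidaoat{\cdot}$ and $\normdasaomoatmo{\cdot}$ by construction. Thus the first assertion requires no further work beyond pointing to Theorem \ref{thm:Gmain}.

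The only genuine step is the simplification under the symmetry hypothesis on $\tilde{\sigma}$. Here I would argue as follows: since the exact dual variable $\sigma=\L\nas u$ is automatically symmetric (because $\L$ maps symmetric matrices to symmetric matrices and $\nas u$ is symmetric), a symmetric $\tilde{\sigma}$ lies in $\sym^{-1}\dgan$ if and only if it lies in $\dgan$, because $\sym\tilde{\sigma}=\tilde{\sigma}$. Consequently $\Divs\tilde{\sigma}=\Div\sym\tilde{\sigma}=\Div\tilde{\sigma}$, which replaces $\Divs$ by $\Div$ in the first term of $\Mle$; the second term is unchanged. Plugging a symmetric $\tilde{\sigma}\in\dgan$ into the already established equality yields the simplified expression for $\Mle(\ut,\tilde{\sigma})$.

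I do not anticipate a real obstacle: every ingredient is either supplied by Theorem \ref{thm:Gmain} or spelled out in the paragraphs immediately preceding the theorem statement. The only item that warrants an explicit sentence is the observation $\Divs\tilde{\sigma}=\Div\tilde{\sigma}$ for symmetric $\tilde{\sigma}$, and the remark that no generality is lost in restricting to symmetric test tensors because the exact stress is symmetric.
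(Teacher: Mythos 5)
Your proposal matches the paper's own treatment exactly: Section 4 states that all application results, including this one, are direct consequences of Theorem \ref{thm:Gmain} via the identification table, with the only elasticity-specific work being the characterization $D(\As)=\sym^{-1}\dgan$ with $\As=-\Divs$ (already verified in the text preceding the theorem) and the observation that $\Divs\tilde{\sigma}=\Div\tilde{\sigma}$ for symmetric $\tilde{\sigma}$, which is precisely your final step. The only cosmetic quibble is that closedness of $\nas$ on $\hogad$ rests on Korn's inequality rather than on viewing $\nas$ as a bounded perturbation of $\na$, but this does not affect the argument and the paper does not dwell on it either.
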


\begin{rem}
We note $\norm{u}_{\ho,\rho,\L}\leq\norm{f}_{\lt,\rho^{-1}}$ 
and $\norm{\sigma}_{\d,\rho^{-1},\L^{-1}}\leq\norm{f}_{\lt,\rho^{-1}}$ and indeed
$$\tnorm{(u,\sigma)}=\norm{f}_{\lt,\rho^{-1}}.$$
The solution operator $L:\lt\to\hogad\times\dgan;f\mapsto(u,\sigma)$ is an isometry, i.e. $\norm{L}=1$.
\end{rem}

\begin{cor}
Theorem \ref{thm:LE} provides the well known a posteriori error estimates 
for the primal and dual problems. 
\begin{itemize}
\item[\bf(i)]
For any $\ut\in\hogad$ it holds
$\ds\norm{u-\ut}_{\ho,\rho,\Amat}^2
=\min_{\psi\in\sym^{-1}\dgan}\Mle(\ut,\psi)
=\Mle(\ut,\sigma)$.
\item[\bf(ii)]
For any $\tilde{\sigma}\in\sym^{-1}\dgan$ it holds
$\ds\norm{\sigma-\tilde{\sigma}}_{\sym^{-1}\d,\rho^{-1},\Amat^{-1}}^2
=\min_{\varphi\in\hogad}\Mle(\varphi,\tilde{\sigma})
=\Mle(u,\tilde{\sigma})$.
\end{itemize}
If $\tilde{\sigma}$ and $\psi$ are already symmetric we can skip the $\sym^{-1}$
and replace $\Divs$ by $\Div$.
\end{cor}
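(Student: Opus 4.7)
The plan is to mimic the proofs of Corollaries \ref{cor:maintheocor} and \ref{cor:EC} verbatim, since the statement is a direct consequence of the error equality \eqref{linelatheoeq} combined with the fact that the combined norm on $\hogad\times\sym^{-1}\dgan$ splits as an orthogonal-type sum of the primal and dual pieces.

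For part (i), I would start from \eqref{linelatheoeq} and simply estimate
\[
\norm{u-\ut}_{\ho,\rho,\L}^2
\leq\tnorm{(u,\sigma)-(\ut,\tilde{\sigma})}^2
=\Mle(\ut,\tilde{\sigma}),
\]
by dropping the non-negative dual part $\norm{\sigma-\tilde{\sigma}}_{\sym^{-1}\d,\rho^{-1},\L^{-1}}^2$ on the left. The key observation is that the left-hand side does not depend on $\tilde{\sigma}$, so I may take the infimum over all $\psi\in\sym^{-1}\dgan$ on the right and obtain $\norm{u-\ut}_{\ho,\rho,\L}^2\leq\inf_{\psi}\Mle(\ut,\psi)$. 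To see that the infimum is attained and equals the left-hand side, I plug $\psi:=\sigma\in\sym^{-1}\dgan$ into \eqref{linelatheoeq}; this zeroes out the dual error and yields $\Mle(\ut,\sigma)=\norm{u-\ut}_{\ho,\rho,\L}^2$, so the infimum is in fact a minimum.

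Part (ii) is entirely symmetric: from \eqref{linelatheoeq} I get
\[
\norm{\sigma-\tilde{\sigma}}_{\sym^{-1}\d,\rho^{-1},\L^{-1}}^2
\leq\tnorm{(u,\sigma)-(\ut,\tilde{\sigma})}^2
=\Mle(\ut,\tilde{\sigma}),
\]
and the left-hand side is independent of $\ut$, so taking the infimum over $\varphi\in\hogad$ and then testing with $\varphi:=u\in\hogad$ (which kills the primal error) gives $\min_{\varphi}\Mle(\varphi,\tilde{\sigma})=\Mle(u,\tilde{\sigma})=\norm{\sigma-\tilde{\sigma}}_{\sym^{-1}\d,\rho^{-1},\L^{-1}}^2$.

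The final remark about symmetric $\tilde{\sigma}$ and $\psi$ requires no real work: Theorem \ref{thm:LE} already notes that the exact stress $\sigma$ is automatically symmetric, so if the test object is symmetric the operators $\Divs$ and $\Div$ coincide on it and the space $\sym^{-1}\dgan$ collapses to the symmetric part of $\dgan$; the same chain of inequalities then goes through unchanged. There is no genuine obstacle here—the only point requiring minor care is verifying that the choices $\psi=\sigma$ and $\varphi=u$ lie in the admissible spaces, which is precisely the regularity guaranteed by the analogues of Lemmas \ref{laxmilgramA} and \ref{laxmilgramAs} in this elasticity setting.
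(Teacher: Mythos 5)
Your argument is correct and is exactly the paper's own route: the corollary is proved verbatim as Corollary \ref{cor:maintheocor}, namely by dropping the non-negative complementary term in the error equality, noting the independence of the remaining side from the free variable, and then attaining the infimum by inserting the exact solution component ($\psi=\sigma$, respectively $\varphi=u$), whose admissibility follows from the mixed formulation. No discrepancy to report.
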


\begin{rem}
We have $\sigma=\L\nas u\in\dgan\cap\,\L\nas\hogad$ is symmetric with $\Divs\sigma=\Div\sigma$
and $u$ and $(u,\sigma)$ solve the strong and mixed formulation, respectively.
Moreover, $\Div\sigma+f\in\rho\hogad$
with $\nas\rho^{-1}(\Div\sigma+f)=\L^{-1}\sigma\in\nas\hogad$.
Hence, for $f\in\rho\ho$ we have $\Div\sigma\in\rho\ho$ and therefore
strong and mixed formulations of the dual problem hold, i.e.,
\begin{align*}
-\nas\rho^{-1}\Div\sigma+\L^{-1}\sigma&=\nas\rho^{-1}f&
&&
\textrm{in }&\om,\\
\nas v+\L^{-1}\sigma&=\nas\rho^{-1}f,&
-\rho^{-1}\Div\sigma&=v&
\textrm{in }&\om.
\end{align*}
\end{rem}


\subsection{Generalized Reaction-Diffusion, Linear Accoustics and Eddy-Current} \label{subsec:diff}

Let $\om$ be a $d$-dimensional smooth Riemannian manifold 
with compact Lipschitz boundary $\ga$. If $\om$ is unbounded, 
we assume that outside of some compact set, $\om$ is isomorphic 
to the exterior unit domain $\set{x\in\reals^{d}}{|x|>1}$.
Moreover, let $\gad$ be an open subset of $\ga$ and $\gan:=\ga\setminus\ol{\gad}$ its complement.
The problem reads:
For $f\in\ltq$ find the differential form potential ($q$-form) $u\in\dq$, such that
\begin{align}
-\cd\Amat\ed u+\rho\,u&=f&\textrm{in }&\om,\nonumber\\
\ttrgad u&=0&\textrm{on }&\gad,\label{eq:diff}\\
\ntrgan \Amat\ed u&=0&\textrm{on }&\gan\nonumber.
\end{align}
Here, $\ed$ denotes exterior derivative, $\cd=\pm*\ed*$ the co-derivative
and $\ttrgad$ resp. $\ntrgan$ the restrictions of the tangential resp. normal
traces $\ttrga$ resp. $\ntrga$ to the proper subspaces.
We also introduce the Sobolev spaces
$$\dq:=\set{\varphi\in\ltq}{\ed\varphi\in\ltqpo},\quad
\deq:=\set{\psi\in\ltq}{\cd\psi\in\ltqmo}$$
and $\dqgad:=\ol{\ciqgad}^{\dq}$, $\deqgan:=\ol{\ciqgan}^{\deq}$,
where $\ciqgad$ resp. $\ciqgan$ are smooth test $q$-forms
having supports bounded away from $\gad$ resp. $\gan$.
Moreover, $\ltq$ denotes the Lebesgue space of all square integrable $q$-forms on $\om$
equipped with the inner or scalar product 
$$\scpltq{u}{\varphi}:=\int_{\om}u\wedge*\varphi$$
and corresponding norm $\normltq{\,\cdot\,}$. Of course, $\dq$ and $\deq$
are equipped with the respective graph norms, making them Hilbert spaces.
Finally, $\rho$ and $\Amat$ denote linear, symmetric, real valued, bounded 
and uniformly positive definite transformations on $q$- resp. $(q+1)$-forms. 
It is again straight forward to discuss complex valued transformations.
We also need the spaces
$$\dqz:=\set{\varphi\in\dq}{\ed\varphi=0},\quad
\dqzgad:=\set{\varphi\in\dqgad}{\ed\varphi=0}$$
and the corresponding spaces for the co-derivative as well as 
the space of harmonic Dirichlet-Neumann forms
$$\harmdiqgadgan:=\dqzgad\cap\deqzgan.$$
The dual variable for this problem is the `flux' $p=\Amat\ed u\in\deqpo$.
The next table shows the relation to the notations of Section \ref{sec:G}.
\begin{center}\begin{tabular}{c|c||c|c||c|c||c|c}
$\alphao$ & $\alphat$ & $\A$ & $\As$ & $\hio$ & $\hit$ & $D(\A)$ & $D(\As)$\\
\hline
$\rho$ & $\Amat$ & $\ed$ & $-\cd$ & $\ltq$ & $\ltqpo$ & $\dqgad$ & $\deqpogan$
\end{tabular}\end{center}
Also here indeed $D(\As)=\deqpogan$ holds, see 
e.g. \cite{goldshteinmitreairinamariushodgedecomixedbc,jakabmitreairinamariusfinensolhodgedeco,kuhndiss}. 
The relation \eqref{partint} turns into
$$\forall\,\varphi\in\dqgad\quad\forall\,\psi\in\deqpogan\qquad
\scpltqpo{\ed\varphi}{\psi}=-\scpltq{\varphi}{\cd\psi}.$$
Considering the norms we have
\begin{align*}
\norm{u}_{\dq,\rho,\Amat}^2
&=\norm{u}_{\ltq,\rho}^2
+\norm{\ed u}_{\ltqpo,\Amat}^2,\\
\norm{p}_{\deqpo,\rho^{-1},\Amat^{-1}}^2
&=\norm{p}_{\ltqpo,\Amat^{-1}}^2
+\norm{\cd p}_{\ltq,\rho^{-1}}^2,\\
\tnorm{(u,p)}^2
&=\norm{u}_{\dq,\rho,\Amat}^2
+\norm{p}_{\deqpo,\rho^{-1},\Amat^{-1}}^2.
\end{align*}
Now \eqref{eq:diff} reads: Find $u\in\dqgad$ with $\Amat\ed u\in\deqpogan$ such that
$$-\cd\Amat\ed u+\rho\,u=f.$$
In mixed formulation we have: Find $(u,p)\in\dqgad\times\deqpogan$ such that
$$-\cd p+\rho\,u=f,\quad\Amat\ed u=p.$$
The primal and dual variational problems are: Find $(u,p)\in\dqgad\times\deqpogan$ such that
\begin{align*}
\forall\,\varphi&\in\dqgad&
\scp{\ed u}{\ed\varphi}_{\ltqpo,\Amat}+\scp{u}{\varphi}_{\ltq,\rho}
&=\scpltq{f}{\varphi},\\
\forall\,\psi&\in\deqpogan&
\scp{\cd p}{\cd\psi}_{\ltq,\rho^{-1}}+\scp{p}{\psi}_{\ltqpo,\Amat^{-1}}
&=-\scp{f}{\cd\psi}_{\ltq,\rho^{-1}}.
\end{align*}

\begin{theo} 
\label{thm:diff}
For any approximation $(\ut,\pt)\in\dqgad\times\deqpogan$ 
$$\tnorm{(u,p)-(\ut,\pt)}^2
=\Mdiff(\ut,\pt),\quad
\frac{\tnorm{(u,p)-(\ut,\pt)}^2}{\tnorm{(u,p)}^2}
=\frac{\Mdiff(\ut,\pt)}{\norm{f}_{\ltq,\rho^{-1}}^2}$$
hold, where $\Mdiff(\ut,\pt)=\norm{f-\rho\ut+\cd\pt}_{\ltq,\rho^{-1}}^2
+\norm{\pt-\Amat\ed\ut}_{\ltqpo,\Amat^{-1}}^2$.
\end{theo}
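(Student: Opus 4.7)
The plan is to recognize Theorem \ref{thm:diff} as a direct instance of the abstract error equality in Theorem \ref{thm:Gmain}, specialized through the dictionary already provided in the table: $\alphao = \rho$, $\alphat = \Amat$, $\A = \ed$, $\As = -\cd$, with $\hio = \ltq$, $\hit = \ltqpo$, $D(\A) = \dqgad$ and $D(\As) = \deqpogan$. So the bulk of the work is really just checking that the abstract hypotheses are met in this concrete situation, and then reading off the conclusion.

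First I would verify the Hilbert space framework. The weighted inner products $\scpltq{\rho\,\cdot\,}{\,\cdot\,}$ on $\ltq$ and $\scpltqpo{\Amat\,\cdot\,}{\,\cdot\,}$ on $\ltqpo$ are equivalent to the standard ones because $\rho$ and $\Amat$ are symmetric, bounded, and uniformly positive definite, so they provide linear self-adjoint topological isomorphisms on the respective spaces, exactly as required for $\alphao$ and $\alphat$. Next I would verify that $\ed : \dqgad \subset \ltq \to \ltqpo$ is densely defined and closed (density holds by definition of $\dqgad$ as the closure of smooth compactly supported forms, closedness is standard), and that its adjoint is $-\cd$ with domain $\deqpogan$; this is the only nontrivial point and is precisely the characterization cited from \cite{goldshteinmitreairinamariushodgedecomixedbc,jakabmitreairinamariusfinensolhodgedeco,kuhndiss}. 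The integration-by-parts identity \eqref{partint} then reduces to the listed formula $\scpltqpo{\ed\varphi}{\psi}=-\scpltq{\varphi}{\cd\psi}$ for $\varphi\in\dqgad$ and $\psi\in\deqpogan$.

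With these identifications in place, the weighted graph norms $\normdaidaoat{\,\cdot\,}$ and $\normdasaomoatmo{\,\cdot\,}$ become exactly $\norm{\,\cdot\,}_{\dq,\rho,\Amat}$ and $\norm{\,\cdot\,}_{\deqpo,\rho^{-1},\Amat^{-1}}$, respectively, and the combined norm $\tnorm{(u,p)}$ matches the one defined in this section. Lemma \ref{laxmilgramA} then guarantees that the mixed problem $-\cd p + \rho\,u = f$, $\Amat\ed u = p$ has a unique solution $(u,p)\in\dqgad\times\deqpogan$, and Remark \ref{rem:isometrygen} gives the isometry $\tnorm{(u,p)} = \norm{f}_{\ltq,\rho^{-1}}$. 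Applying Theorem \ref{thm:Gmain} with $(\xt,\yt) = (\ut,\pt)$ yields the error equality
\[
\tnorm{(u,p)-(\ut,\pt)}^2
= \norm{f - \rho\ut + \cd\pt}_{\ltq,\rho^{-1}}^2
+ \norm{\pt - \Amat\ed\ut}_{\ltqpo,\Amat^{-1}}^2
= \Mdiff(\ut,\pt),
\]
and dividing by $\tnorm{(u,p)}^2 = \norm{f}_{\ltq,\rho^{-1}}^2$ produces the normalized counterpart.

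I do not expect any real obstacle, since Theorem \ref{thm:Gmain} has been proved in full generality. The only subtle step, already dealt with in the literature, is the characterization $D(\As) = \deqpogan$ for Lipschitz boundaries and mixed boundary parts; without this identification one would still have $D(\As)\supset\deqpogan$, which would suffice for the inequality direction but not for the full equality applied to arbitrary conforming approximations $\pt\in\deqpogan$. Since the cited references cover this point, the proof essentially reduces to invoking Theorem \ref{thm:Gmain}.
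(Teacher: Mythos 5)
Your proposal is correct and follows exactly the route the paper takes: Section \ref{sec:A} explicitly states that all results there, including Theorem \ref{thm:diff}, are direct consequences of Theorem \ref{thm:Gmain} via the identification table ($\alphao=\rho$, $\alphat=\Amat$, $\A=\ed$, $\As=-\cd$, $D(\A)=\dqgad$, $D(\As)=\deqpogan$), with the only delicate point being the characterization $D(\As)=\deqpogan$, which the paper likewise delegates to the cited references. Your additional remark about why the full equality (rather than just one inequality) genuinely needs $D(\As)=\deqpogan$ is a sound observation, but the overall argument is the same as the paper's.
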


\begin{rem}
We note $\norm{u}_{\dq,\rho,\Amat}\leq\norm{f}_{\ltq,\rho^{-1}}$ 
and $\norm{p}_{\deqpo,\rho^{-1},\Amat^{-1}}\leq\norm{f}_{\ltq,\rho^{-1}}$ and indeed
$$\tnorm{(u,p)}=\norm{f}_{\ltq,\rho^{-1}}.$$
The solution operator $L:\ltq\to\dqgad\times\deqpogan;f\mapsto(u,p)$ is an isometry, i.e. $\norm{L}=1$.
\end{rem}

\begin{cor}
Theorem \ref{thm:diff} provides the a posteriori error estimates 
for the primal and dual problems. 
\begin{itemize}
\item[\bf(i)]
For any $\ut\in\dqgad$ it holds
$\ds\norm{u-\ut}_{\dq,\rho,\Amat}^2
=\min_{\psi\in\deqpogan}\Mdiff(\ut,\psi)
=\Mdiff(\ut,p)$.
\item[\bf(ii)]
For any $\pt\in\deqpogan$ it holds
$\ds\norm{p-\pt}_{\deqpo,\rho^{-1},\Amat^{-1}}^2
=\min_{\varphi\in\dqgad}\Mdiff(\varphi,\pt)
=\Mdiff(u,\pt)$.
\end{itemize}
\end{cor}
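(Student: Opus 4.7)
The plan is to mirror exactly the argument used in Corollary \ref{cor:maintheocor}, specialized to the differential form setting. The content of the corollary is simply that the combined error equality of Theorem \ref{thm:diff} contains both one-sided majorants as projections onto each factor, and that each one-sided majorant is attained by inserting the \emph{other} exact solution component.

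First, for part (i), I would start from the equality in Theorem \ref{thm:diff}: for any $(\ut,\pt)\in\dqgad\times\deqpogan$,
\begin{equation*}
\norm{u-\ut}_{\dq,\rho,\Amat}^2+\norm{p-\pt}_{\deqpo,\rho^{-1},\Amat^{-1}}^2=\Mdiff(\ut,\pt).
\end{equation*}
Dropping the non-negative second term on the left gives $\norm{u-\ut}_{\dq,\rho,\Amat}^2\leq\Mdiff(\ut,\pt)$. Since the left-hand side is independent of $\pt$, I can take the infimum over all $\psi\in\deqpogan$ and obtain $\norm{u-\ut}_{\dq,\rho,\Amat}^2\leq\inf_{\psi\in\deqpogan}\Mdiff(\ut,\psi)$. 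To upgrade this to equality and identify the minimizer, I would substitute $\psi:=p\in\deqpogan$ and use the exact equations $\cd p=\rho u-f$ and $p=\Amat\ed u$ to compute
\begin{equation*}
\Mdiff(\ut,p)=\norm{\rho(u-\ut)}_{\ltq,\rho^{-1}}^2+\norm{\Amat\ed(u-\ut)}_{\ltqpo,\Amat^{-1}}^2=\norm{u-\ut}_{\dq,\rho,\Amat}^2,
\end{equation*}
which shows that the infimum is attained at $\psi=p$ and equals the left-hand side.

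Part (ii) proceeds symmetrically. Dropping the first term on the left of the error equality yields $\norm{p-\pt}_{\deqpo,\rho^{-1},\Amat^{-1}}^2\leq\Mdiff(\ut,\pt)$ for every $\ut\in\dqgad$, hence $\norm{p-\pt}_{\deqpo,\rho^{-1},\Amat^{-1}}^2\leq\inf_{\varphi\in\dqgad}\Mdiff(\varphi,\pt)$. Substituting $\varphi:=u\in\dqgad$ and using $f-\rho u=-\cd p$ and $\Amat\ed u=p$ gives
\begin{equation*}
\Mdiff(u,\pt)=\norm{\cd(\pt-p)}_{\ltq,\rho^{-1}}^2+\norm{\pt-p}_{\ltqpo,\Amat^{-1}}^2=\norm{p-\pt}_{\deqpo,\rho^{-1},\Amat^{-1}}^2,
\end{equation*}
identifying $\varphi=u$ as the minimizer and proving equality.

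There is no real obstacle here: both the estimate step and the sharpness step rest exclusively on Theorem \ref{thm:diff} and the mixed formulation $-\cd p+\rho u=f$, $\Amat\ed u=p$. The only point that might deserve a line of comment is that the membership conditions $p\in\deqpogan$ and $u\in\dqgad$, needed to legitimately plug in the exact solution components as test objects, are precisely guaranteed by the abstract existence result transported from Lemmas \ref{laxmilgramA} and \ref{laxmilgramAs} via the dictionary table $\alphao=\rho$, $\alphat=\Amat$, $\A=\ed$, $\As=-\cd$.
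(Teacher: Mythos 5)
Your proposal is correct and follows essentially the same route as the paper: the paper obtains this corollary as the specialization of Corollary \ref{cor:maintheocor} (drop one non-negative term in the error equality of Theorem \ref{thm:Gmain}, take the infimum over the free argument, then show attainment by inserting the exact component), which is exactly your argument transported to the differential-form dictionary $\alphao=\rho$, $\alphat=\Amat$, $\A=\ed$, $\As=-\cd$. The only cosmetic difference is that you verify sharpness by substituting the mixed equations $\cd p=\rho u-f$, $p=\Amat\ed u$ directly into $\Mdiff$, whereas the paper reads it off from the error equality with the exact component inserted; the two computations are identical in substance.
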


We note that for $q=0$ we get back the reaction-diffusion problem from Section \ref{subsec:RD}
and for $d=3$ or $d=2$ and $q=1$ we obtain the eddy-current problems 
from Sections \ref{subsec:EC} and \ref{subsec:EC2D},
identifying $\om\subset\reals^{d}$ with a proper domain and
$0$-forms with functions and $1$- and $2$-forms with vector fields
by Riesz' representation theorem and Hodge's star operator.

\begin{rem}
It holds $p=\Amat\ed u\in\deqpogan\cap\,\Amat\ed\dqgad$ and $u$ and $(u,p)$ 
solve the strong and mixed formulations, respectively.
Moreover, $\cd p+f$ belongs to $\rho\dqgad$ and we see immediately 
$\ed\rho^{-1}(\cd p+f)=\Amat^{-1}p\in\ed\dqgad=\dqpozgad\cap\,(\harmdiqpogadgan)^{\bot}$.
Hence, for $f\in\rho\dq$ we have $\cd p\in\rho\dq$ and therefore
the strong and mixed formulations of the dual problem
\begin{align*}
-\ed\rho^{-1}\cd p+\Amat^{-1}p&=\ed\rho^{-1}f&
&&
\textrm{in }&\om,\\
\ed v+\Amat^{-1}p&=\ed\rho^{-1}f,&
-\rho^{-1}\cd p&=v&
\textrm{in }&\om
\intertext{hold, which are completed by the equations}
\ttrgad\rho^{-1}(\cd p+f)&=0&
&&
\textrm{on }&\gad,\\
\ttrgan p&=0&
&&
\textrm{on }&\gan,\\
\ed\Amat^{-1}p&=0&
&&
\textrm{in }&\om,\\
\ttrgad\Amat^{-1}p&=0&
&&
\textrm{on }&\gad,\\
\Amat^{-1}p&\,\,\bot\,\,\harmdiqpogadgan.
\end{align*}
Of course, there are also more equations for $v$
following from $\rho v\in\rho\dq\cap\cd\deqpogan$, e.g. $\cd\rho v=0$,
which we will not list here explicitly.
\end{rem}


\section{Inhomogeneous and More Boundary Conditions}
\label{sec:inhomobc}

In this section we will demonstrate that our error equalities also hold 
for Robin type boundary conditions, which means that our error equalities are true
for many commonly used boundary conditions. Moreover, we emphasize that we can also handle
inhomogeneous boundary conditions.
Since it is clear that this method works in the general setting as well
we will discuss it here just for the simple reaction-diffusion model problem from the introduction.

Let $\om$ be as in the latter section and now the boundary $\ga$ 
be decomposed into three disjoint parts $\gad$, $\gan$ and $\gar$.
The model problem is:
Find the scalar potential $u\in\ho$ such that
\begin{align*}
-\div\na u+u&=f&\textrm{in }&\om,\\
u&=g_1&\textrm{on }&\gad,\\
n\cdot\na u&=g_2&\textrm{on }&\gan,\\
n\cdot\na u+\gamma u&=g_3&\textrm{on }&\gar
\end{align*}
hold. Hence, on $\gad, \gan$ and $\gar$ we impose Dirichlet, Neumann 
and Robin type boundary conditions, respectively. 
In the Robin boundary condition, we assume that the coefficient $\gamma\ge\gamma_0>0$ 
belongs to $\li$. The dual variable for this problem 
is the flux $p:=\na u\in\d$.
Furthermore, as long as $\gar\neq\emptyset$ and
to avoid tricky discussions about traces 
and the corresponding $\hmoh$-spaces of $\ga$, $\gad, \gan$ and $\gar$,
which can be quite complicated, we assume for simplicity that
$u\in\htwo$. Then, $p\in\ho$ and all $g_{i}$ belong to $\lt$ even to $\hoh$ of $\ga$.
For the norms we simply have
$$\tnorm{(u,p)}^2
=\normho{u}^2
+\normd{p}^2.$$

\begin{theo} 
\label{thm:dnr}
For any approximation $(\ut,\pt)\in\htwo\times\ho$ with
$u-\ut\in\hogad$ and $p-\pt\in\dgan$ as well as
$n\cdot(p-\pt)+\gamma(u-\ut)=0$ on $\gar$
$$\tnorm{(u,p)-(\ut,\pt)}^2
+\norm{u-\ut}_{\lt(\gar),\gamma}^2
+\norm{n\cdot(p-\pt)}_{\lt(\gar),\gamma^{-1}}^2
=\Mmix(\ut,\pt)$$
holds with $\Mmix$ from Theorem \ref{thm:Imain}.
Moreover, $\norm{u-\ut}_{\lt(\gar),\gamma}=\norm{n\cdot(p-\pt)}_{\lt(\gar),\gamma^{-1}}$.
\end{theo}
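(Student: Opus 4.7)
The approach follows the elementary direct computation given in Remark \ref{rem:Iproof}, extended carefully with a boundary integration by parts. The smoothness assumption $\ut\in\htwo$ (so $\pt\in\ho$) and $u\in\htwo$ (so $p\in\ho$) is precisely what avoids the delicate $\hmoh$-trace theory on the subsets $\gad,\gan,\gar$ and allows us to handle all pointwise boundary values in $\lt(\ga)$.

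First, I would substitute $f=-\div p+u$ and $p=\na u$ inside the two squares that define $\Mmix(\ut,\pt)$, exactly as in Remark \ref{rem:Iproof}. Expanding each square produces, apart from the pure error norms $\normlt{u-\ut}^2+\normlt{\div(p-\pt)}^2+\normlt{p-\pt}^2+\normlt{\na(u-\ut)}^2$, two cross terms
\[
2\scplt{u-\ut}{\div(\pt-p)}+2\scplt{\pt-p}{\na(u-\ut)}.
\]
In the homogeneous Dirichlet case of Remark \ref{rem:Iproof} these cancel by $u-\ut\in\hogad$. Here, because $u-\ut\in\hoga$ does not vanish on $\gar$ and $p-\pt$ does not have vanishing normal trace on $\gar$, the integration by parts produces a genuine boundary term
\[
2\scplt{u-\ut}{\div(\pt-p)}+2\scplt{\na(u-\ut)}{\pt-p}=2\int_{\ga}(u-\ut)\,n\cdot(\pt-p)\,ds,
\]
which is legal under the imposed $\htwo$/$\ho$ regularity.

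Next I would split the boundary integral according to $\ga=\gad\cup\gan\cup\gar$. On $\gad$ the integrand vanishes because $u-\ut\in\hogad$; on $\gan$ it vanishes because $p-\pt\in\dgan$, i.e.\ $n\cdot(p-\pt)=0$. On $\gar$ the Robin-type compatibility $n\cdot(p-\pt)+\gamma(u-\ut)=0$ lets me rewrite the integrand in two equivalent ways,
\[
(u-\ut)\,n\cdot(\pt-p)=\gamma(u-\ut)^2=\gamma^{-1}\bigl(n\cdot(p-\pt)\bigr)^{2},
\]
yielding at once the equality $\norm{u-\ut}_{\lt(\gar),\gamma}=\norm{n\cdot(p-\pt)}_{\lt(\gar),\gamma^{-1}}$ and showing that the boundary integral equals $2\norm{u-\ut}_{\lt(\gar),\gamma}^{2}=\norm{u-\ut}_{\lt(\gar),\gamma}^{2}+\norm{n\cdot(p-\pt)}_{\lt(\gar),\gamma^{-1}}^{2}$. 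Collecting the terms reproduces
\[
\Mmix(\ut,\pt)=\tnorm{(u,p)-(\ut,\pt)}^{2}+\norm{u-\ut}_{\lt(\gar),\gamma}^{2}+\norm{n\cdot(p-\pt)}_{\lt(\gar),\gamma^{-1}}^{2}.
\]

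The only potential obstacle is the justification of the boundary integral: one needs the products $(u-\ut)\,n\cdot(p-\pt)$ to be integrable over $\ga$ and the surface integrals over $\gad,\gan,\gar$ to be defined separately. This is exactly why the theorem postulates $\ut\in\htwo$ (and hence $u\in\htwo$, $p,\pt\in\ho$): both trace $(u-\ut)|_{\ga}$ and normal trace $n\cdot(p-\pt)|_{\ga}$ lie in $\lt(\ga)$, so Green's formula applies and the localization to the three boundary pieces is unambiguous. With that regularity in place the rest is the same bookkeeping as in Remark \ref{rem:Iproof}.
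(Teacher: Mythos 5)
Your proposal is correct and follows essentially the same route as the paper's own proof: expand $\Mmix(\ut,\pt)$ as in Remark \ref{rem:Iproof}, integrate the cross terms by parts to a boundary integral (justified by the $\htwo\times\ho$ regularity putting the traces in $\lt(\ga)$), kill the contributions on $\gad$ and $\gan$, and convert the $\gar$-term via the Robin compatibility $n\cdot(p-\pt)=-\gamma(u-\ut)$ into the two weighted boundary norms. The sign bookkeeping and the final identity $\norm{u-\ut}_{\lt(\gar),\gamma}=\norm{n\cdot(p-\pt)}_{\lt(\gar),\gamma^{-1}}$ both check out.
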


\begin{proof}
Following Remark \ref{rem:Iproof} we have
\begin{align*}
\Mmix(\ut,\pt)
&=\ubr{\normho{u-\ut}^2
+\normd{p-\pt}^2}_{\ds=\tnorm{(u,p)-(\ut,\pt)}^2}
+2\scplt{\na(u-\ut)}{\pt-p}
+2\scplt{u-\ut}{\div(\pt-p)}.
\end{align*}
Moreover, since $n\cdot(\pt-p)$ and $u-\ut$ belong to $\lt(\ga)$ we have 
\begin{align*}
&\qquad\scplt{\na(u-\ut)}{\pt-p}
+\scplt{u-\ut}{\div(\pt-p)}\\
&=\scp{n\cdot(\pt-p)}{u-\ut}_{\lt(\ga)}
=\scp{n\cdot(\pt-p)}{u-\ut}_{\lt(\gar)}
=\scp{\gamma(u-\ut)}{u-\ut}_{\lt(\gar)}.
\end{align*}
As $\scp{\gamma(u-\ut)}{u-\ut}_{\lt(\gar)}=\scp{\gamma^{-1}n\cdot(p-\pt)}{n\cdot(p-\pt)}_{\lt(\gar)}$
we get the assertion.
\end{proof}

\begin{rem}
If all $g_{i}=0$, we can set $(\ut,\pt)=(0,0)$ and get
$$\tnorm{(u,p)}^2
+\norm{u}_{\lt(\gar),\gamma}^2
+\norm{n\cdot p}_{\lt(\gar),\gamma^{-1}}^2
=\normlt{f}^2,$$
which follows also directly from Remark \ref{thm:Imainrem} (ii'),
$p=\na u$ and $n\cdot p=-\gamma u$ on $\gar$ as well as
\begin{align*}
\normlt{f}^2
&=\normlt{\div p}^2
+\normlt{u}^2
-2\scplt{\div\na u}{u}\\
&=\normlt{\div p}^2
+\normlt{u}^2
+2\normlt{\na u}
-2\scp{n\cdot\na u}{u}_{\lt(\ga)}\\
&=\normlt{\div p}^2
+\normlt{u}^2
+2\normlt{\na u}
-2\ubr{\scp{n\cdot\na u}{u}_{\lt(\gar)}}_{\ds=-\norm{u}_{\lt(\gar),\gamma}^2}.
\end{align*}
Thus, in this case the assertion of Theorem \ref{thm:dnr} has a normalized counterpart as well.
\end{rem}

If $\gar=\emptyset$ we have a pure mixed Dirichlet and Neumann boundary. 

\begin{theo} 
\label{thm:dnrnorobin}
Let $\gar=\emptyset$.
For any approximation $(\ut,\pt)\in\ho\times\d$ with
$u-\ut\in\hogad$ and $p-\pt\in\dgan$
$$\tnorm{(u,p)-(\ut,\pt)}^2
=\Mmix(\ut,\pt)$$
holds with $\Mmix$ from Theorem \ref{thm:Imain}.
\end{theo}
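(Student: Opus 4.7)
The plan is to mimic the elementary direct computation from Remark \ref{rem:Iproof}, which is already flagged there as extending to more general boundary settings since the cross-term cancellation only requires the compatibility $u - \ut \in \hogad$ (and now, symmetrically, $p - \pt \in \dgan$). The key tool is the integration-by-parts identity
$$\scplt{\na\varphi}{\psi} = -\scplt{\varphi}{\div\psi}$$
valid for every $\varphi \in \hogad$ and $\psi \in \dgan$; this is precisely the abstract duality relation \eqref{partint} for the operator pair $\A = \na$, $\As = -\div$ with domains $\hogad$ and $\dgan$, as specialized in Section \ref{subsec:RD}. Because $\gar = \emptyset$ we have $\ga = \ol{\gad \cup \gan}$, so no Robin contribution interferes and the identity is available for exactly the pair needed.

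First, substituting the exact equations $-\div p + u = f$ and $\na u = p$ into $\Mmix$ rewrites the residuals as
$$f - \ut + \div\pt = (u - \ut) + \div(\pt - p), \qquad \pt - \na\ut = (\pt - p) + \na(u - \ut).$$
Expanding the two squared $\lt$-norms in $\Mmix(\ut, \pt)$ produces the four diagonal contributions $\normlt{u - \ut}^2 + \normlt{\na(u - \ut)}^2 + \normlt{\pt - p}^2 + \normlt{\div(\pt - p)}^2$, which by the definitions of $\normho{\cdot}$ and $\normd{\cdot}$ combine exactly into $\tnorm{(u, p) - (\ut, \pt)}^2$, plus the two cross terms $2\scplt{u - \ut}{\div(\pt - p)}$ and $2\scplt{\pt - p}{\na(u - \ut)}$.

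Second, I apply the integration-by-parts identity with $\varphi := u - \ut \in \hogad$ and $\psi := \pt - p \in \dgan$, both of which hold by hypothesis. The two cross terms are then negatives of each other and cancel, yielding the claimed equality. There is really no obstacle here: the only new ingredient compared to Remark \ref{rem:Iproof} is the use of the mixed-boundary integration-by-parts formula in place of the pure-Dirichlet one, which is standard on Lipschitz domains once the trace spaces for $\hogad$ and $\dgan$ are interpreted weakly. Equivalently, the theorem can be obtained as an immediate corollary of Theorem \ref{thm:dnr}: setting $\gar = \emptyset$ makes both boundary terms $\norm{u - \ut}_{\lt(\gar), \gamma}^2$ and $\norm{n \cdot (p - \pt)}_{\lt(\gar), \gamma^{-1}}^2$ integrations over the empty set, and the Robin compatibility $n \cdot (p - \pt) + \gamma(u - \ut) = 0$ on $\gar$ becomes vacuous.
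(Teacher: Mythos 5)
Your proposal is correct and follows essentially the same route as the paper: Theorem \ref{thm:dnrnorobin} is obtained there precisely as the $\gar=\emptyset$ specialization of Theorem \ref{thm:dnr}, whose proof is the direct expansion of Remark \ref{rem:Iproof} with the cross terms handled by the mixed-boundary integration-by-parts identity for $\hogad$ and $\dgan$. Both your direct computation and your observation that the Robin boundary terms vanish when $\gar=\emptyset$ match the paper's argument.
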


\begin{cor}
Let $\gar=\emptyset$.
Theorem \ref{thm:dnrnorobin} provides the well known a posteriori error estimates 
for the primal and dual problems. 
\begin{itemize}
\item[\bf(i)]
For any $\ut\in\ho$ with $u-\ut\in\hogad$ it holds
$\ds\normho{u-\ut}^2
=\min_{\substack{\psi\in\d\\p-\psi\in\dgan}}\Mmix(\ut,\psi)
=\Mmix(\ut,p)$.
\item[\bf(ii)]
For any $\pt\in\d$ with $p-\pt\in\dgan$ it holds
$\ds\normd{p-\pt}^2
=\min_{\substack{\varphi\in\ho\\u-\varphi\in\hogad}}\Mmix(\varphi,\pt)
=\Mmix(u,\pt)$.
\end{itemize}
\end{cor}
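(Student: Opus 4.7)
The plan is to follow exactly the template of Corollary \ref{cor:maintheocor} in the abstract setting, using Theorem \ref{thm:dnrnorobin} as the starting point. The key observation is that the error equality in Theorem \ref{thm:dnrnorobin} controls a combined norm, and each summand on the left hand side is trivially dominated by the whole combined norm.

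For part (i), I would fix an admissible primal approximation $\ut\in\ho$ with $u-\ut\in\hogad$. For any admissible dual approximation $\psi\in\d$ with $p-\psi\in\dgan$, Theorem \ref{thm:dnrnorobin} applied to the pair $(\ut,\psi)$ yields
$$\normho{u-\ut}^2+\normd{p-\psi}^2=\tnorm{(u,p)-(\ut,\psi)}^2=\Mmix(\ut,\psi),$$
and dropping the nonnegative term $\normd{p-\psi}^2$ gives $\normho{u-\ut}^2\leq\Mmix(\ut,\psi)$. Since the left hand side is independent of $\psi$, taking the infimum over admissible $\psi$ yields the ``$\leq$'' direction. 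To get the reverse, observe that $\psi:=p$ is admissible (as $p\in\d$ and $p-p=0\in\dgan$), and plugging it into the above equality forces $\Mmix(\ut,p)=\normho{u-\ut}^2$, so the infimum is attained at $\psi=p$ and the three quantities coincide.

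Part (ii) is completely analogous with the roles of primal and dual swapped: fix admissible $\pt\in\d$ with $p-\pt\in\dgan$; for any admissible $\varphi\in\ho$ with $u-\varphi\in\hogad$, Theorem \ref{thm:dnrnorobin} applied to $(\varphi,\pt)$ gives $\normd{p-\pt}^2\leq\Mmix(\varphi,\pt)$. Taking infimum over admissible $\varphi$ and then plugging in $\varphi:=u$ (admissible since $u-u=0\in\hogad$) yields the claimed chain of equalities.

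I do not expect any real obstacle. The only item worth checking is that the admissible sets over which the minima are taken are non-empty and that the candidate minimizers ($\psi=p$ in (i), $\varphi=u$ in (ii)) actually lie in those sets, but both are immediate from the trivial zero element of the respective subspace. The entire argument is mechanical once Theorem \ref{thm:dnrnorobin} is available and mirrors the abstract Corollary \ref{cor:maintheocor} line by line.
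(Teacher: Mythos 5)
Your proof is correct and follows essentially the same route as the paper, which proves this corollary (implicitly, via the identical argument given for the abstract Corollary \ref{cor:maintheocor}) by dropping one nonnegative summand from the error equality, noting independence of the remaining term from the free variable, and exhibiting the exact solution component as the admissible minimizer. The extra check that $\psi=p$ and $\varphi=u$ lie in the constrained admissible sets is a sensible, correct addition.
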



\section{Numerical Examples} \label{sec:N}

In this section we show by some academic test cases 
the numerical performance of our error equalities.
All the calculations have been done using MATLAB, 
and the reported values in the tables have not been rounded, 
but are simply cut-offs of values reported by MATLAB. 
The main quantity of interest is the difference between the exact error 
and the value given by the majorant for a certain approximation $(\ut,\pt)$, i.e.,
$$\delta:=\big|\tnorm{(u,p)-(\ut,\pt)}-\sqrts{\M_{\cdots}(\ut,\pt)}\big|,$$
where the test problems are either from the reaction-diffusion problems from subsection \ref{subsec:RD} 
or from the eddy-current problems from subsections \ref{subsec:EC} and \ref{subsec:EC2D}. 
Where the finite element method (FEM) has been used, 
we have employed only linear triangular elements in 2D and linear tetrahedral elements in 3D. 
In all the examples below we calculated the approximations $\ut$ and $\pt$ (or $\Et$ and $\Ht$)
in the same mesh only for the sake of convenience. 
Using different meshes for the primal and dual approximations is allowed. 
We also used only regular meshes, but irregular meshes can be used as well. 
The only requirement is that the approximations must be conforming, 
meaning that they belong to the appropriate Sobolev spaces 
and fulfill the boundary conditions exactly.
All finite element solvers were implemented in the vectorized manner explained in \cite{rahmanvaldmanfastmatlab}.

\begin{ex}
\label{ex1}
We take the 3D-reaction-diffusion problem from Section \ref{subsec:RD}
and choose the unit cube $\om:=(0,1)^3$ with exact solution 
$$u(x):=\prod_{i=1}^{3}x_{i}(1-x_{i}),$$
where $u$ satisfies the zero Dirichlet boundary conditions on the whole boundary, i.e.,
$\gad=\ga$ and $\gan=\emptyset$,
and the following data
$$\Amat(x):=\Amat:=\begin{bmatrix}1&0&0\\0&5&0\\0&0&10\end{bmatrix},\quad
\rho(x):=\begin{cases}
1 & \textrm{if} \quad 0 < x_1 < \nicefrac{1}{4} \\
10 & \textrm{if} \quad \nicefrac{1}{4} < x_1 < \nicefrac{3}{4} \\
25 & \textrm{if} \quad \nicefrac{3}{4} < x_1 < 1
\end{cases}.$$
This means that the approximation 
of the dual variable does not have any boundary condition. 
We calculated the approximation globally by solving 
the primal and dual problem
with standard linear Courant elements and linear Raviart-Thomas elements, respectively. 
We will denote this finite element approximation pair by $(\uh,\ph)$. 
The resulting linear systems were solved directly in MATLAB. 
The approximations were calculated in uniformly refined regular meshes, 
where the jumps in the reaction coefficient $\rho$ coincide with element boundaries. 
For each mesh we computed the exact combined error and the majorant $\Mrd(\uh,\ph)$. 
The results are displayed in Table \ref{tbl:Ex1}. 
The first column shows the number of elements $\Nelem$ of the mesh. 
The second and third column show the exact error and the value given by the majorant. 
The fourth column shows the difference $\delta$ between the exact error and the value given by the majorant.
\end{ex}

\begin{table} [htdp]
\caption{Example \ref{ex1} (3D-reaction-diffusion)}
\begin{center}
\begin{tabular}{r|c|c|c}
$\Nelem$ & $\tnorm{(u,p)-(\uh,\ph)}$ & $\sqrts{\Mrd(\uh,\ph)}$ & difference $\delta$ \\
\hline\hline
384    & 0.12803218100 & 0.12803218100 & 5.551115123e-17 \\
3072   & 0.06736516349 & 0.06736516349 & 4.163336342e-17 \\
24576  & 0.03433600867 & 0.03433600867 & 9.714451465e-17  \\
196608 & 0.01728806289 & 0.01728806289 & 3.469446952e-18
\end{tabular}
\end{center}
\label{tbl:Ex1}
\end{table}

\begin{ex}
\label{ex2}
This test is similar to the Example 1 except that the linear systems resulting 
from the finite element computations were not solved directly, but with an iterative method,
where the stopping tolerance was set to the crude value of $10^{-4}$.
The approximation pair obtained by this method is denoted by $(\uiter,\piter)$. 
No preconditioning was done. The iterative solver of the linear system of the dual problem converged 
only for the smallest mesh, and the error actually grows between the two last meshes. 
With this stopping tolerance this is expected
and was purposefully done so in order to obtain approximations 
which are relatively far from having the Galerkin orthogonality property. 
We did this test simply to demonstrate that Galerkin orthogonality 
is not a requirement for the equality to hold. 
The results are displayed in Table \ref{tbl:Ex2}.
\end{ex}

\begin{table} [htdp]
\caption{Example \ref{ex2} (3D-reaction-diffusion)}
\begin{center}
\begin{tabular}{r|c|c|c}
$\Nelem$ & $\tnorm{(u,p)-(\uiter,\piter)}$ & $\sqrts{\Mrd(\uiter,\piter)}$ & difference $\delta$ \\
\hline\hline
384    & 0.12803483290 & 0.12803483290 & 2.775557562e-17 \\
3072   & 0.06868358511 & 0.06868358511 & 6.938893904e-17 \\
24576  & 0.05294561599 & 0.05294561599 & 6.245004514e-17 \\
196608 & 0.09166231565 & 0.09166231565 & 9.714451465e-17
\end{tabular}
\end{center}
\label{tbl:Ex2}
\end{table}

\begin{ex}
\label{ex3}
We ran the problem data of Example \ref{ex1} with subsequently refined regular meshes, 
where the approximation of the primal variable $\uh$ was again obtained by the linear Courant finite elements. 
The resulting linear system was solved directly. 
The approximation of the dual variable was calculated by averaging the values 
$\Amat\na\uh$ to the nodes of the mesh. This procedure is often called the gradient averaging method
and we will denote the resulting function by $\pav$. 
The results can be seen in Table \ref{tbl:Ex3}.
\end{ex}

\begin{table} [htdp]
\caption{Example \ref{ex3} (3D-reaction-diffusion)}
\begin{center}
\begin{tabular}{r|c|c|c}
$\Nelem$ & $\tnorm{(u,p)-(\uh,\pav)}$ & $\sqrts{\Mrd(\uh,\pav)}$ & difference $\delta$ \\
\hline\hline
384    & 0.2698605861 & 0.2698605861 & 0 \\
3072   & 0.2285323585 & 0.2285323585 & 0 \\
24576  & 0.1831121412 & 0.1831121412 & 6.106226635e-16 \\
196608 & 0.1333268308 & 0.1333268308 & 1.693090113e-15
\end{tabular}
\end{center}
\label{tbl:Ex3}
\end{table}

\begin{ex}
\label{ex4}
We take the 2D-eddy-current problem from Section \ref{subsec:EC2D} 
and choose the unit square $\om:=(0,1)^2$ 
with $\eps=\id$ and $\mu=1$. We split the domain in two parts
$\om_1:=\set{x\in\om}{x_1>x_2}$ and $\om_2=\om\setminus\ol{\om_1}$ 
in order to define the following discontinuous solution
$$E|_{\om_1}(x):=
\begin{bmatrix}
\sin(2\pi x_1)+2\pi\cos(2\pi x_1)(x_1-x_2)\\
\sin\big((x_1-x_2)^2(x_1-1)^2x_2\big)-\sin(2\pi x_1)
\end{bmatrix},\quad
E|_{\om_2}(x):=0.$$
Note that indeed $E\in\r\setminus\ho$ and $\rot E\in\ho$ with
$$\rot E|_{\om_1}(x)=2x_{2}(x_1-x_2)(x_1-1)(2x_{1}-x_{2}-1)\cos(2\pi x_1).$$
We set zero Neumann boundary conditions on the whole boundary, i.e.,
$\gad=\emptyset$ and $\gan=\ga$. 
The exact solution and its rotation is visualized in Figure \ref{fig:ex4}.
We calculated the approximation globally by solving 
the primal and dual problem with linear N\'ed\'elec elements and linear Courant elements, respectively. 
This finite element approximation pair will be denoted by $(\Eh,\Hh)$. 
The resulting linear systems were solved directly. 
The approximations were calculated in uniformly refined regular meshes, 
where the jumps in the exact solution and in the right hand side $J$ coincide with element boundaries. 
For each mesh we calculated the exact combined error and the majorant $\Mec(\Eh,\Hh)$. 
The results are displayed in Table \ref{tbl:Ex4}.
\end{ex}

\begin{figure}[!h]
\centering
\begin{picture}(450,140)
\put(0,0){\includegraphics[scale=0.6]{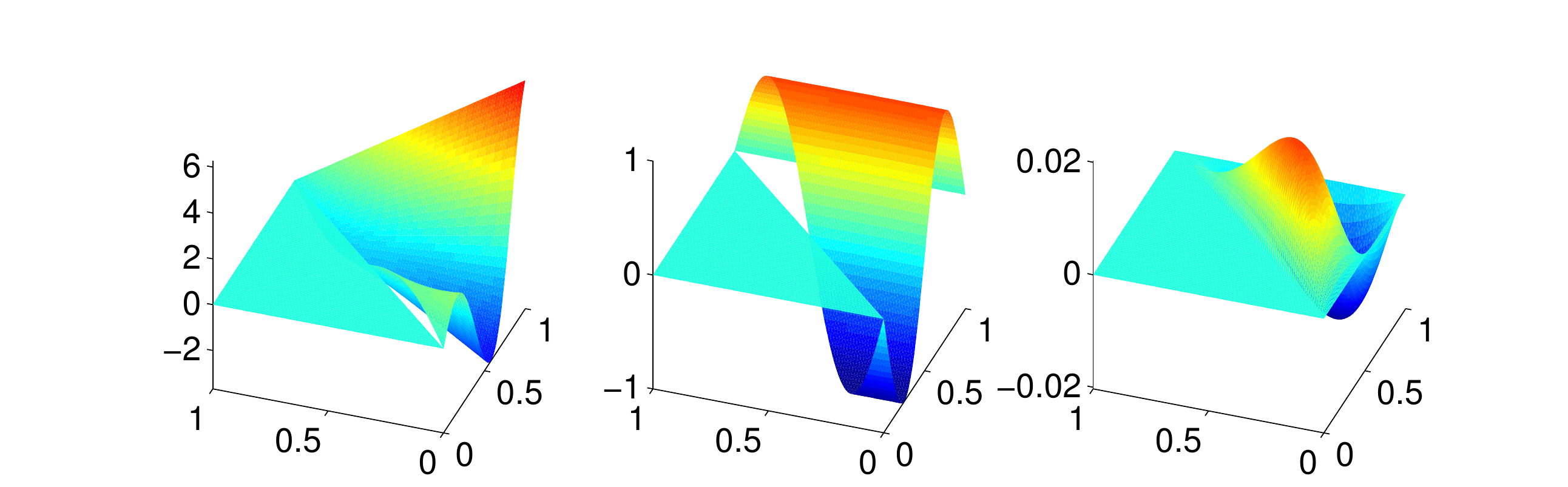}}
\put(65,115){\small$E_1$}
\put(190,115){\small$E_2$}
\put(320,115){\small$H=\rot E$}
\end{picture}
\caption{The two components of the exact solution $E$ and its rotation $H$ of Example \ref{ex4}.}
\label{fig:ex4}
\end{figure}

\begin{table} [htdp]
\caption{Example \ref{ex4} (2D-eddy-current)}
\begin{center}
\begin{tabular}{r|c|c|c}
$\Nelem$ & $\tnorm{(E,H)-(\Eh,\Hh)}$ & $\sqrts{\Mec(\Eh,\Hh)}$ & difference $\delta$ \\
\hline\hline
800    & 0.151485078300 & 0.151485078300 & 2.220446049e-16 \\
3200   & 0.075877018950 & 0.075877018950 & 0 \\
12800  & 0.037956449900 & 0.037956449900 & 7.632783294e-17 \\
51200  & 0.018980590110 & 0.018980590110 & 6.938893904e-17 \\
204800 & 0.009490605462 & 0.009490605462 & 2.602085214e-17
\end{tabular}
\end{center}
\label{tbl:Ex4}
\end{table}

\begin{ex}
\label{ex5}
We take the 3D-eddy-current problem from Section \ref{subsec:EC} 
and choose the unit cube $\om:=(0,1)^3$ 
with $\eps=\mu=\id$. Again we split the domain in two parts
$\om_1:=\set{x\in\om}{x_1>x_2}$ and $\om_2=\om\setminus\ol{\om_1}$ 
in order to define the following discontinuous solution
$$E(x):=
\chi_{\om_{1}}(x)
\begin{bmatrix}
\sin(2\pi x_1)+2\pi\cos(2\pi x_1)(x_1-x_2)\\
\sin\big((x_1-x_2)^2(x_1-1)^2x_2\big)-\sin(2\pi x_1)\\
0
\end{bmatrix}
+\xi(x)\begin{bmatrix}0\\0\\1\end{bmatrix},\quad
\xi(x):=\prod_{i=1}^{3}x_{i}^2(1-x_{i})^2.$$
Thus, we extended the discontinuous vector field of Example \ref{ex4} 
by zero in the third component and added a smooth bubble in the third component.
Hence, $E\in\r\setminus\ho$ and $\rot E\in\r$ with
$$\rot E(x)=
\chi_{\om_{1}}(x)\big(2x_{2}(x_1-x_2)(x_1-1)(2x_{1}-x_{2}-1)\cos(2\pi x_1)\big)
\begin{bmatrix}0\\0\\1\end{bmatrix}
+\begin{bmatrix}\p_{2}\xi\\-\p_{1}\xi\\0\end{bmatrix}(x).$$
Note that even $\rot E\in\ho$ holds.
We set zero Neumann boundary conditions on the whole boundary, i.e.,
$\gad=\emptyset$ and $\gan=\ga$. 
We calculated the approximation globally by solving 
the primal and dual problem with linear N\'ed\'elec elements.
This finite element approximation pair will be denoted by $(\Eh,\Hh)$.
The resulting linear systems were solved directly.
The approximations were calculated in uniformly refined regular meshes, 
where the jumps in the exact solution and in the right hand side $J$ coincide with element boundaries.
For each mesh we calculated the exact combined error and the majorant $\Mec(\Eh,\Hh)$. 
The results are displayed in Table \ref{tbl:Ex5}.
\end{ex}

\begin{table} [htdp]
\caption{Example \ref{ex5} (3D-eddy-current)}
\begin{center}
\begin{tabular}{r|c|c|c}
$\Nelem$ & $\tnorm{(E,H)-(\Eh,\Hh)}$ & $\sqrts{\Mec(\Eh,\Hh)}$ & difference $\delta$ \\
\hline\hline
384    & 0.7228185218 & 0.7228185218 & 3.330669074e-16 \\
3072   & 0.3717887807 & 0.3717887807 & 6.106226635e-16 \\
24576  & 0.1883612515 & 0.1883612515 & 2.775557562e-16 \\
196608 & 0.0945757836 & 0.0945757836 & 8.604228441e-16
\end{tabular}
\end{center}
\label{tbl:Ex5}
\end{table}

\begin{ex}
\label{ex6}
We take the problem data of Example \ref{ex4} and solve the primal and dual problems 
in adaptively refined meshes with linear N\'ed\'elec elements and linear Courant elements, respectively. 
This finite element approximation pair will be denoted by $(\Eh,\Hh)$ and the linear systems are solved directly. 
We compare optimal refinement achieved by using the exact error distribution $e_T$ 
to the refinement provided by the distribution of the majorant $\eta_T$, where 
\begin{align*}
e_T^2
&:=\tnorm{(E,H)-(\Eh,\Hh)}^2_T
:=\norm{E-\Eh}_{\r(T)}^2
+\norm{H-\Hh}_{\ho(T)}^2,\\
\eta_T^2
&:=\Mec(\Eh,\Hh)_{T}
:=\norm{J-\Eh-\na^{\bot}\Hh}_{\lt(T)}^2
+\norm{\Hh-\rot\Eh}_{\lt(T)}^2
\end{align*}
and $T$ denotes an element (triangle) of the mesh discretization. 
We start from a regular mesh with $200$ elements, and perform nine refinement iterations, 
where on each iteration $30\%$ of elements with the highest amount of error are refined. 
The refinement of element meshes is done by regular refinement such that
the resulting mesh does not contain hanging nodes. 
The results of Figure \ref{fig:ex6} show that even though the equality is \emph{global}, 
the majorant can still be used to perform reliable adaptive computations. 
We see from Table \ref{tbl:Ex6} that the number of elements in the optimal meshes and
the meshes produced using $\eta_T$ are very close to each other.
In Figure \ref{fig:ex6mesh} we have depicted the meshes after the fourth refinement.
Figure \ref{fig:ex6mesh_final} depicts one of the finest parts of the final meshes.
In fact, the adaptive refinement using $\eta_T$ is very close to optimal in each step, 
and the resulting approximation after the last refinement is practically the same.
\end{ex}

\begin{figure}[!h]
\centering
\includegraphics[scale=0.55]{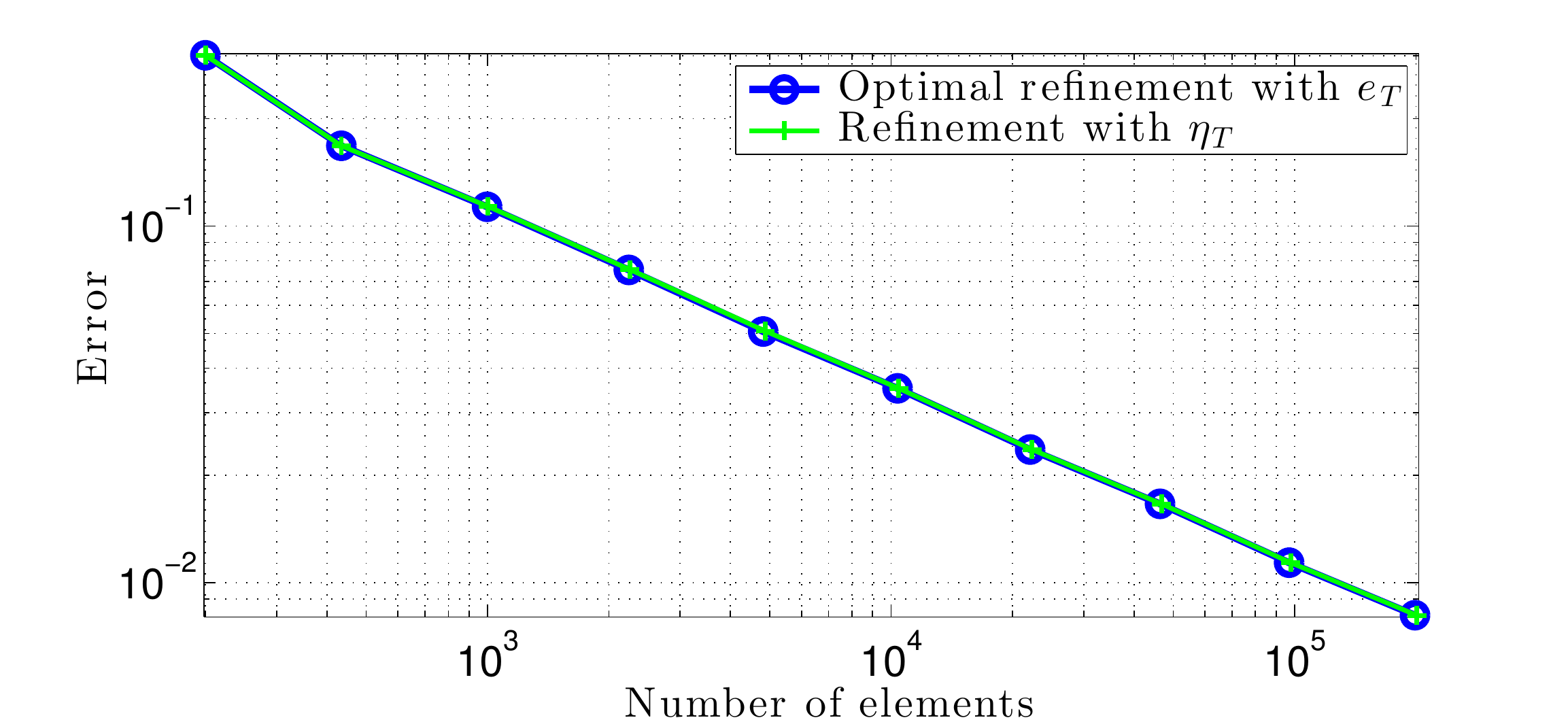}
\caption{Adaptive computation of Example \ref{ex6}, where the error is measured in the combined norm.}
\label{fig:ex6}
\end{figure}

\begin{figure}[!h] 
\centering
\includegraphics[scale=0.5,trim=0 25 0 0]{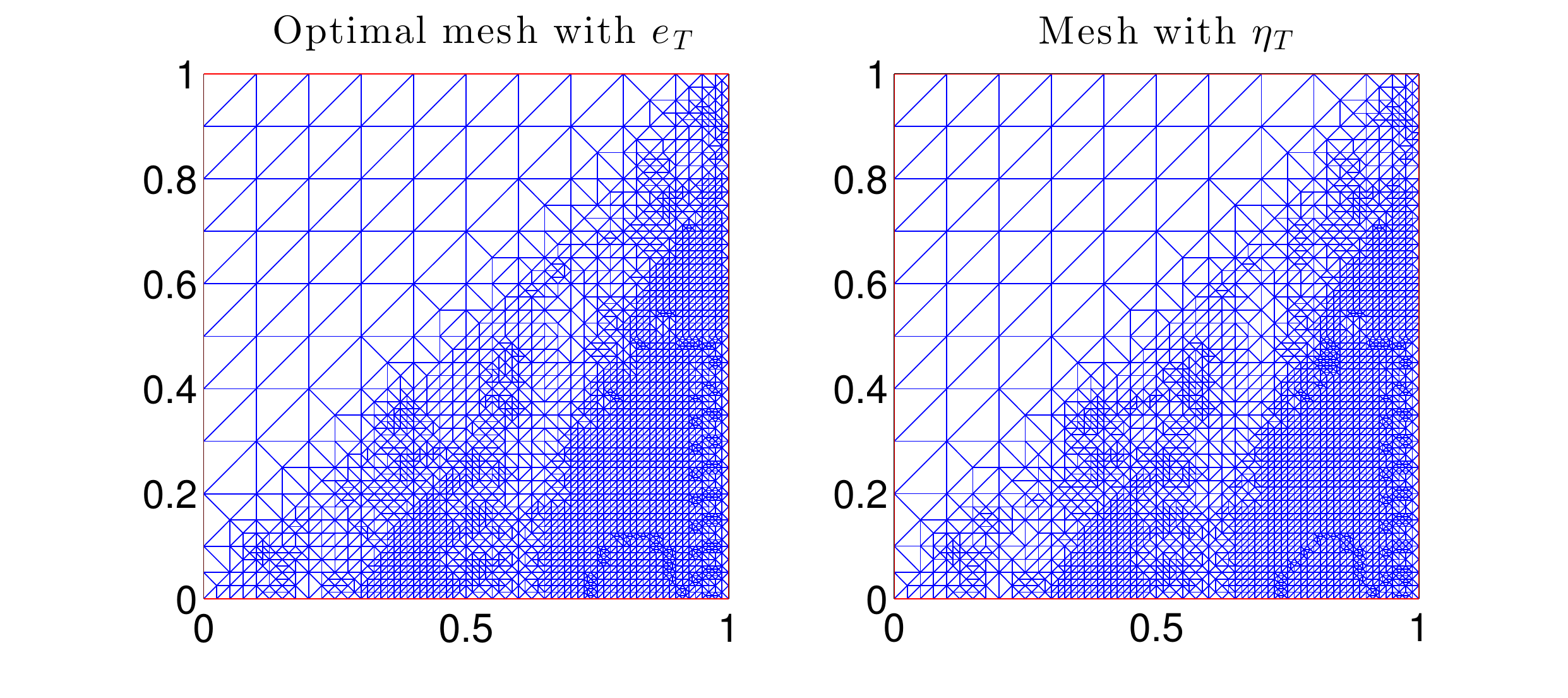}
\caption{Adaptive mesh after the fourth refinement in Example \ref{ex6}. 
There are 4823 elements in the optimal mesh, 
and 4878 elements in the mesh calculated with the help of $\eta_T$.}
\label{fig:ex6mesh}
\end{figure}

\begin{figure}[!h] 
\centering
\includegraphics[scale=0.5,trim=0 25 0 0]{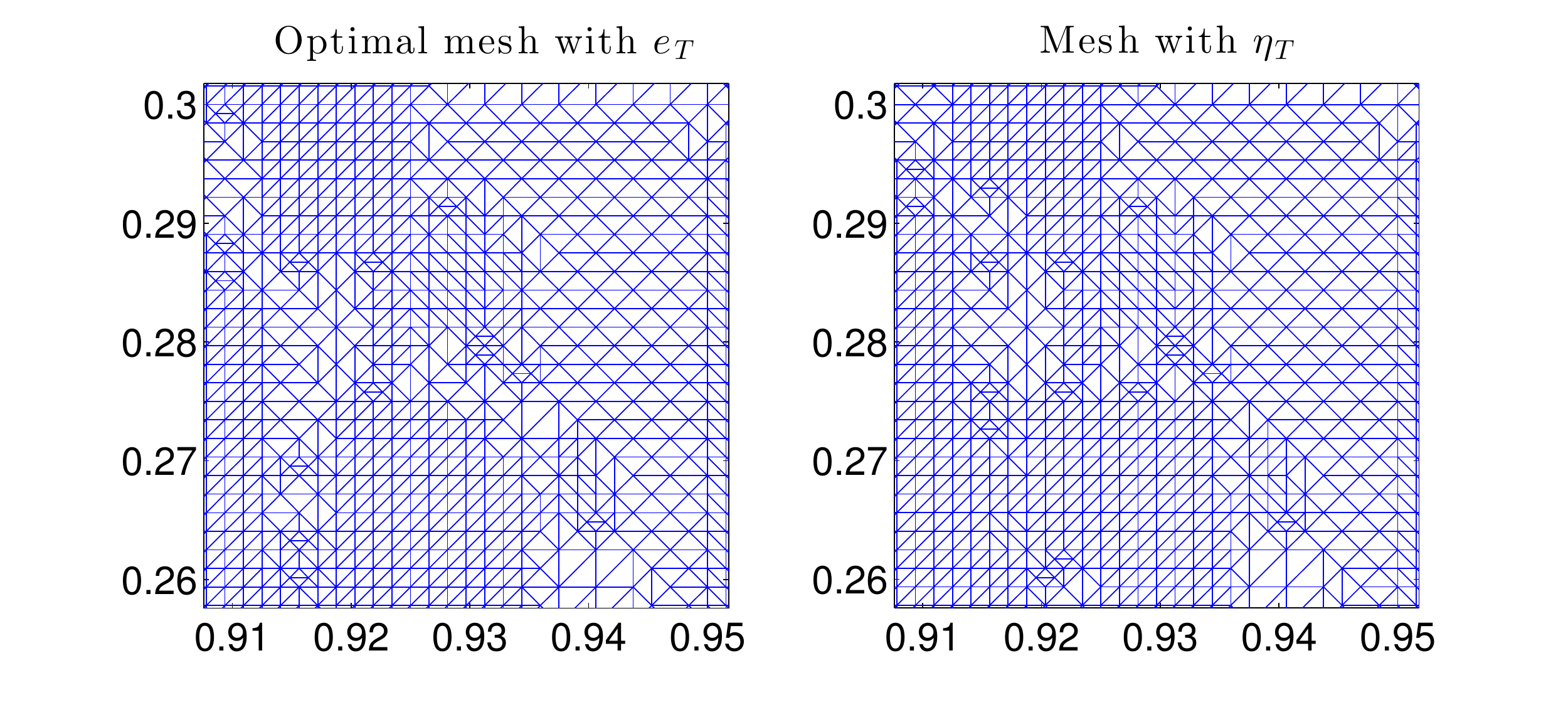}
\caption{One of the most fine parts in the final adaptive mesh in Example \ref{ex6}.}
\label{fig:ex6mesh_final}
\end{figure}

\begin{table} [htdp]
\caption{Adaptive computation of Example \ref{ex6}. 
The number of elements in the optimal meshes and the meshes generated by the help of $\eta_T$.}
\begin{center}
\begin{tabular}{r|r|r|r|l}
Ref. & optimal & with $\eta_T$ & difference & difference $\%$ \\
\hline\hline
- &    200 &    200 &    0 & 0 \\
1 &    434 &    434 &    0 & 0 \\
2 &    998 &   1002 &    4 & 0.40 \\
3 &   2240 &   2252 &   12 & 0.53 \\
4 &   4823 &   4878 &   55 & 1.14 \\
5 &  10378 &  10446 &  68  & 0.65 \\
6 &  22116 &  22337 &  221 & 0.99 \\
7 &  46388 &  46768 &  380 & 0.81 \\
8 &  96859 &  97832 &  973 & 1.00 \\
9 & 198704 & 200970 & 2266 & 1.14
\end{tabular}
\end{center}
\label{tbl:Ex6}
\end{table}

\begin{ex}
\label{ex7}
We take the 2D-eddy-current problem of Section \ref{subsec:EC2D} in the $L$-shaped domain 
$\om:=(0,1)^2\setminus\big([\nicefrac{1}{2},1]\times[0,\nicefrac{1}{2}]\big)$ 
with $\eps=\id$, $\mu=1000$ and $J=[1,0]^\top$. 
We set zero Dirichlet boundary conditions on the whole boundary, i.e., $\gad=\ga$ and $\gan=\emptyset$. 
The exact solution of this problem is unknown. 
However, since the majorant gives indeed the exact error in the combined norm, 
we will use this information in this example. Therefore, all the error values 
in Figure \ref{fig:ex7} and Table \ref{tbl:Ex7} are actually the values of the majorant. 
We compare uniform refinement and adaptive refinement using $\eta_T$ with
$$\eta_T^2
=\Mec(\Eh,\Hh)_{T}
=\norm{J-\Eh-\na^{\bot}\Hh}_{\lt(T)}^2
+\norm{\Hh-\mu^{-1}\rot\Eh}_{\lt(T),\mu}^2,$$
refining $30\%$ of elements on each refinement iteration as before. 
We solve the primal and dual problems with linear N\'ed\'elec elements and linear Courant elements, respectively.
The resulting linear systems are solved directly.
We see from Figure \ref{fig:ex7} that the adaptive procedure is beneficial in this example.
We have also depicted the approximation in Figure \ref{fig:ex7approx} 
and the mesh in Figure \ref{fig:ex7mesh} after the fifth refinement.
It can be concluded that in addition to providing the exact error, 
the majorant also provides a good error indicator without any additional computational expenditures.
\end{ex}

\begin{figure}[!h]
\centering
\includegraphics[scale=0.55]{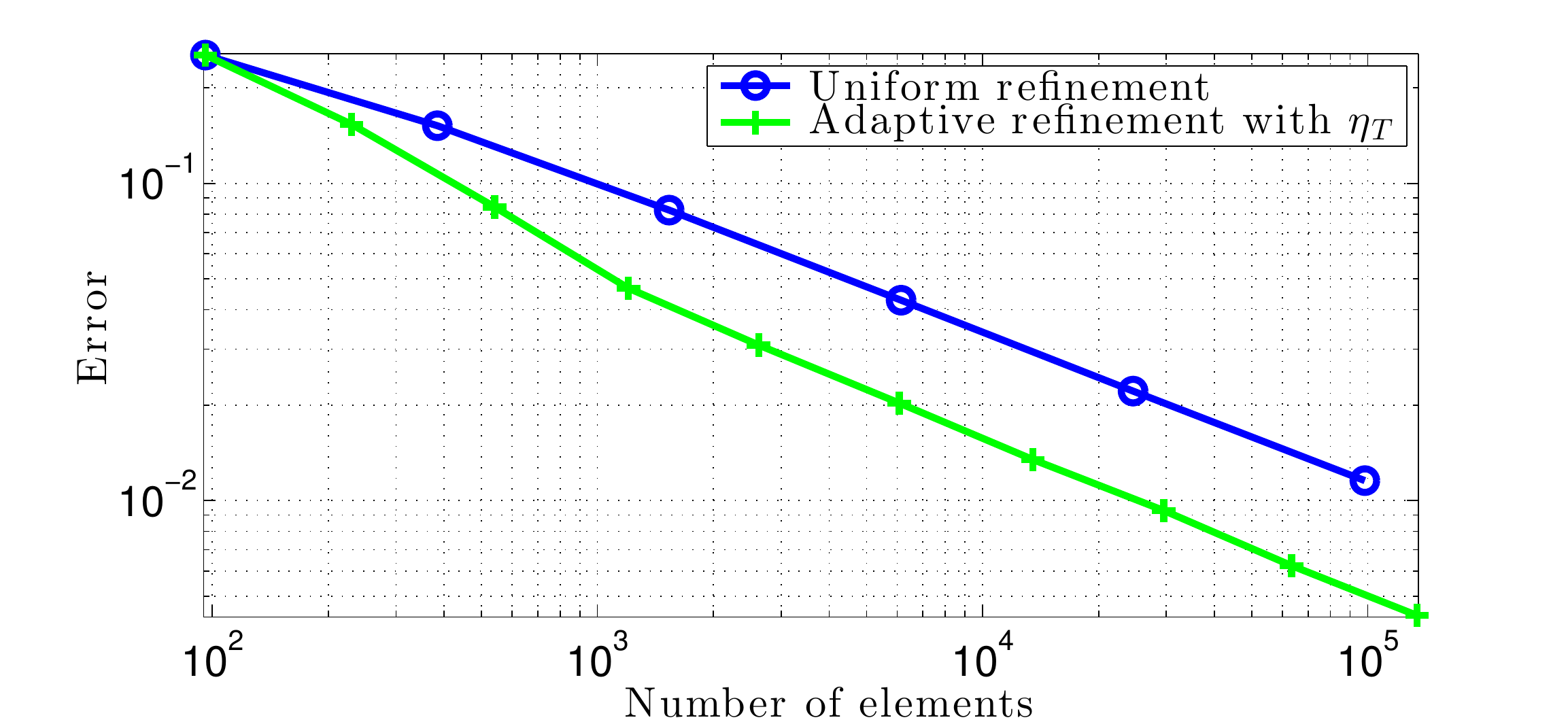}
\caption{Adaptive computation of Example \ref{ex7}.}
\label{fig:ex7}
\end{figure}

\begin{table} [htdp]
\caption{Example \ref{ex7} (2D-eddy-current) 
Adaptively refined meshes.}
\begin{center}
\begin{tabular}{r|c|c}
$\Nelem$ & $\sqrts{\Mec(\Eh,\Hh)}$ & $\sqrts{\Mec(\Eh,\Hh)}/\norm{J}_{\lt}$ \\
\hline\hline
96     & 0.2534 & 0.2926 \\
230    & 0.1534 & 0.1771 \\
541    & 0.0842 & 0.0973 \\
1204   & 0.0467 & 0.0539 \\
2623   & 0.0309 & 0.0357 \\
6082   & 0.0203 & 0.0234 \\
13514  & 0.0135 & 0.0155 \\
29530  & 0.0093 & 0.0107 \\
63363  & 0.0062 & 0.0072 \\
134205 & 0.0043 & 0.0050
\end{tabular}
\end{center}
\label{tbl:Ex7}
\end{table}

\begin{figure}[!h]
\centering
\begin{picture}(370,135)
\put(0,0){\includegraphics[scale=0.6,trim=0 0 0 0]{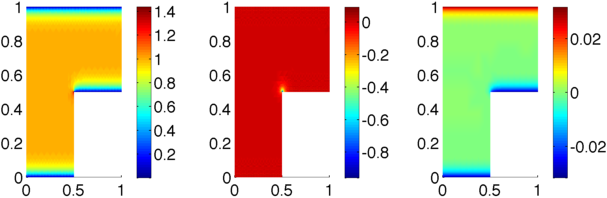}}
\put(35,125){\small$\Eho$}
\put(160,125){\small$\Eht$}
\put(290,125){\small$\Hh$}
\end{picture}
\caption{The two components of the approximate primal variable $\Eh$ and the dual variable $\Hh$ 
of Example \ref{ex7} after the third adaptive refinement.}
\label{fig:ex7approx}
\end{figure}

\begin{figure}[!h] 
\centering
\includegraphics[scale=0.4,trim=0 25 0 0]{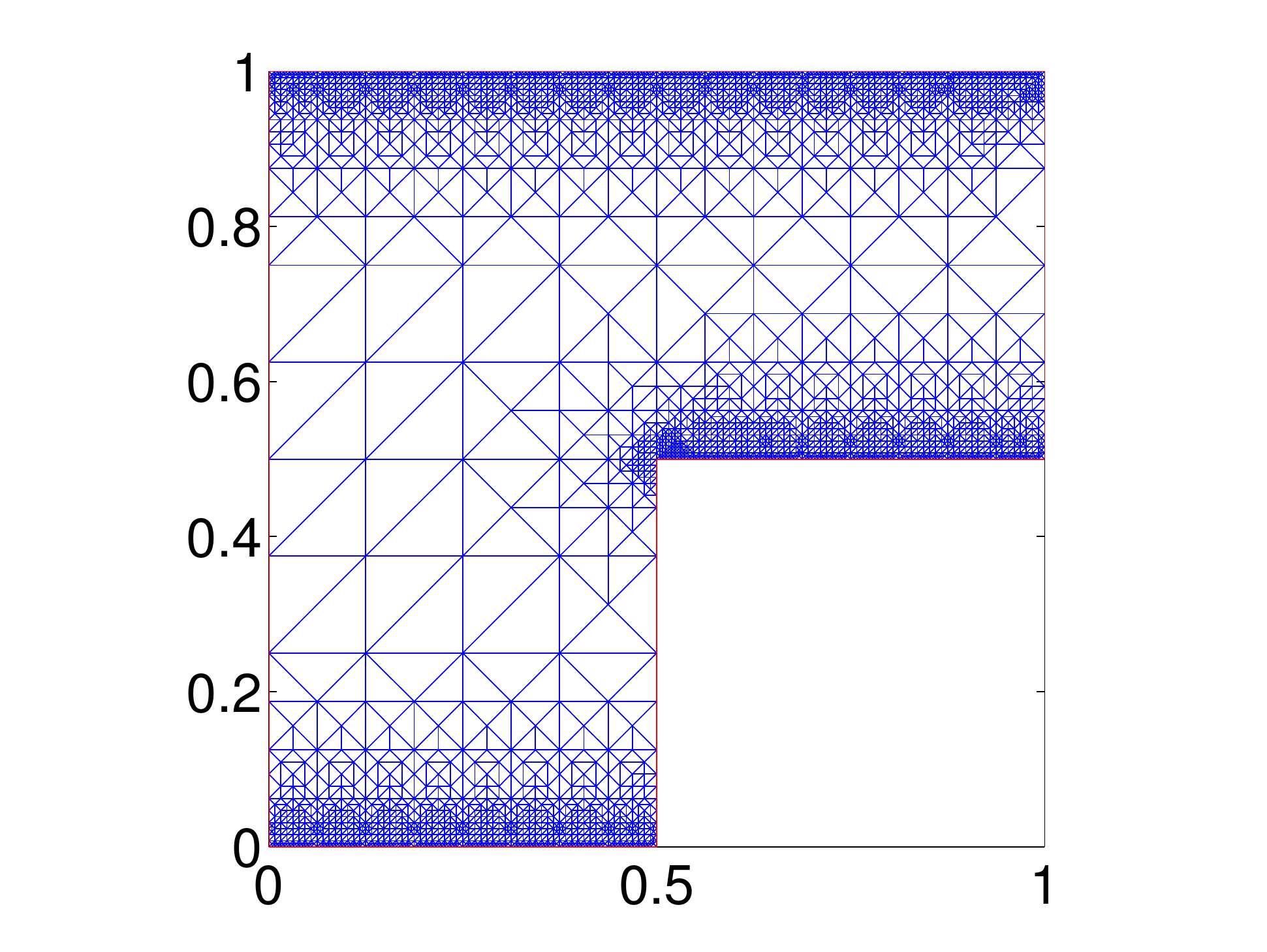}
\caption{Adaptive mesh after the fifth adaptive refinement in Example \ref{ex7}.}
\label{fig:ex7mesh}
\end{figure}

\begin{ex}
\label{ex8}
We take the 2D-eddy-current problem of Section \ref{subsec:EC2D} in $\om:=(0,1)^2$.
In order to define discontinuous data, we define with $\xi(x):=\ln(2+x_2)$ and
\begin{align*}
\om_1
&:=\big((0,1)\times(0.4,0.6)\big)\cup\big((0.3,0.5)\times(0,1)\big),&
\eps|_{\om_1}
&:=\id,&
\eps|_{\om\setminus\ol{\om}_1}
&:=100\cdot\id,\\
& &
\mu|_{\om_1}
&:=1000,&
\mu|_{\om\setminus\ol{\om}_1}
&:=1,\\
\om_2
&:=(0,1)\times(0.35,0.65),&
J|_{\om_2}
&:=\xi\begin{bmatrix}1\\0\end{bmatrix},&
J|_{\om\setminus\ol{\om}_2}
&:=-\xi\begin{bmatrix}0\\1\end{bmatrix}.
\end{align*}
We set zero Dirichlet boundary conditions on the right side of the rectanglular boundary
and zero Neumann boundary condition on the remaining part, i.e., 
$\gad=\set{x\in\om}{x_1=1}$. 
As in Example \ref{ex7}, the exact solution of this problem is unknown, so the error values 
in Figure \ref{fig:ex8} and Table \ref{tbl:Ex8} are the values of the majorant. 
We compare uniform refinement and adaptive refinement using $\eta_T$ with
$$\eta_T^2
=\Mec(\Eh,\Hh)_{T}
=\norm{J-\eps\Eh-\na^{\bot}\Hh}_{\lt(T),\eps^{-1}}^2
+\norm{\Hh-\mu^{-1}\rot\Eh}_{\lt(T),\mu}^2,$$
refining $30\%$ of elements on each refinement iteration as before. 
We solve the primal and dual problems with linear N\'ed\'elec elements and linear Courant elements, respectively.
The resulting linear systems are solved directly.
Again, we see from Figure \ref{fig:ex8} that the adaptive procedure is beneficial in this example.
We have also depicted the approximation in Figure \ref{fig:ex8approx} 
and the mesh in Figure \ref{fig:ex8mesh} after the third refinement.
\end{ex}

\begin{figure}[!h]
\centering
\includegraphics[scale=0.55]{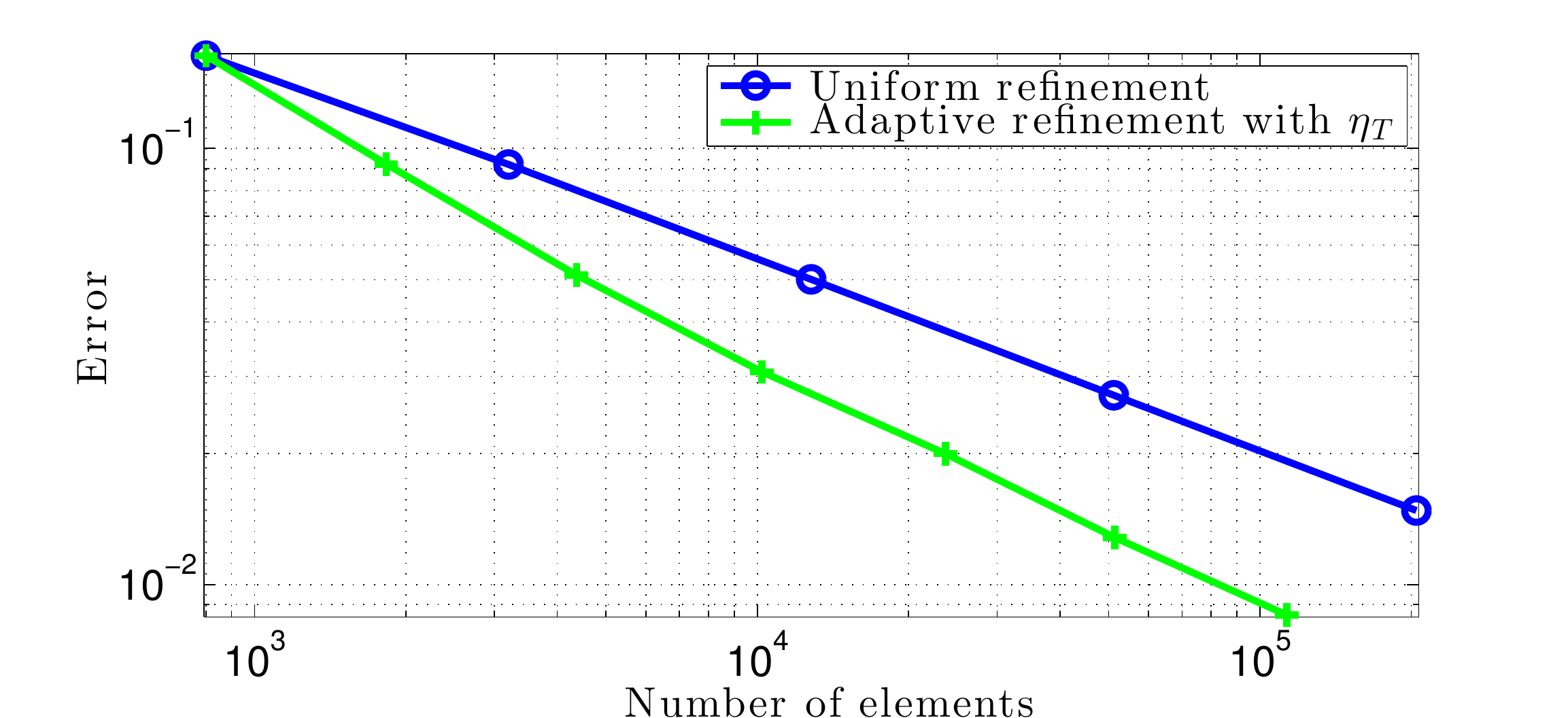}
\caption{Adaptive computation of Example \ref{ex8}.}
\label{fig:ex8}
\end{figure}

\begin{table} [htdp]
\caption{Example \ref{ex8} (2D-eddy-current) 
Adaptively refined meshes.}
\begin{center}
\begin{tabular}{r|c|c}
$\Nelem$ & $\sqrts{\Mec(\Eh,\Hh)}$ & $\sqrts{\Mec(\Eh,\Hh)}/\norm{J}_{\lt,\eps^{-1}}$ \\
\hline\hline
800    & 0.1632 & 0.2941 \\
1827   & 0.0921 & 0.1659 \\
4367   & 0.0513 & 0.0924 \\
10214  & 0.0307 & 0.0554 \\
23657  & 0.0199 & 0.0359 \\
51429  & 0.0128 & 0.0231 \\
113073 & 0.0085 & 0.0153
\end{tabular}
\end{center}
\label{tbl:Ex8}
\end{table}

\begin{figure}[!h]
\centering
\begin{picture}(370,135)
\put(0,0){\includegraphics[scale=0.6,trim=0 0 0 0]{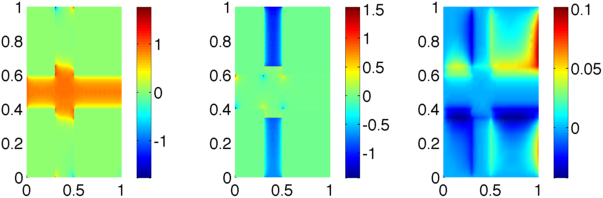}}
\put(35,125){\small$\Eho$}
\put(160,125){\small$\Eht$}
\put(290,125){\small$\Hh$}
\end{picture}
\caption{The two components of the approximate primal variable $\Eh$ and the dual variable $\Hh$ 
of Example \ref{ex8} after the third adaptive refinement.}
\label{fig:ex8approx}
\end{figure}

\begin{figure}[!h] 
\centering
\includegraphics[scale=0.4,trim=0 25 0 0]{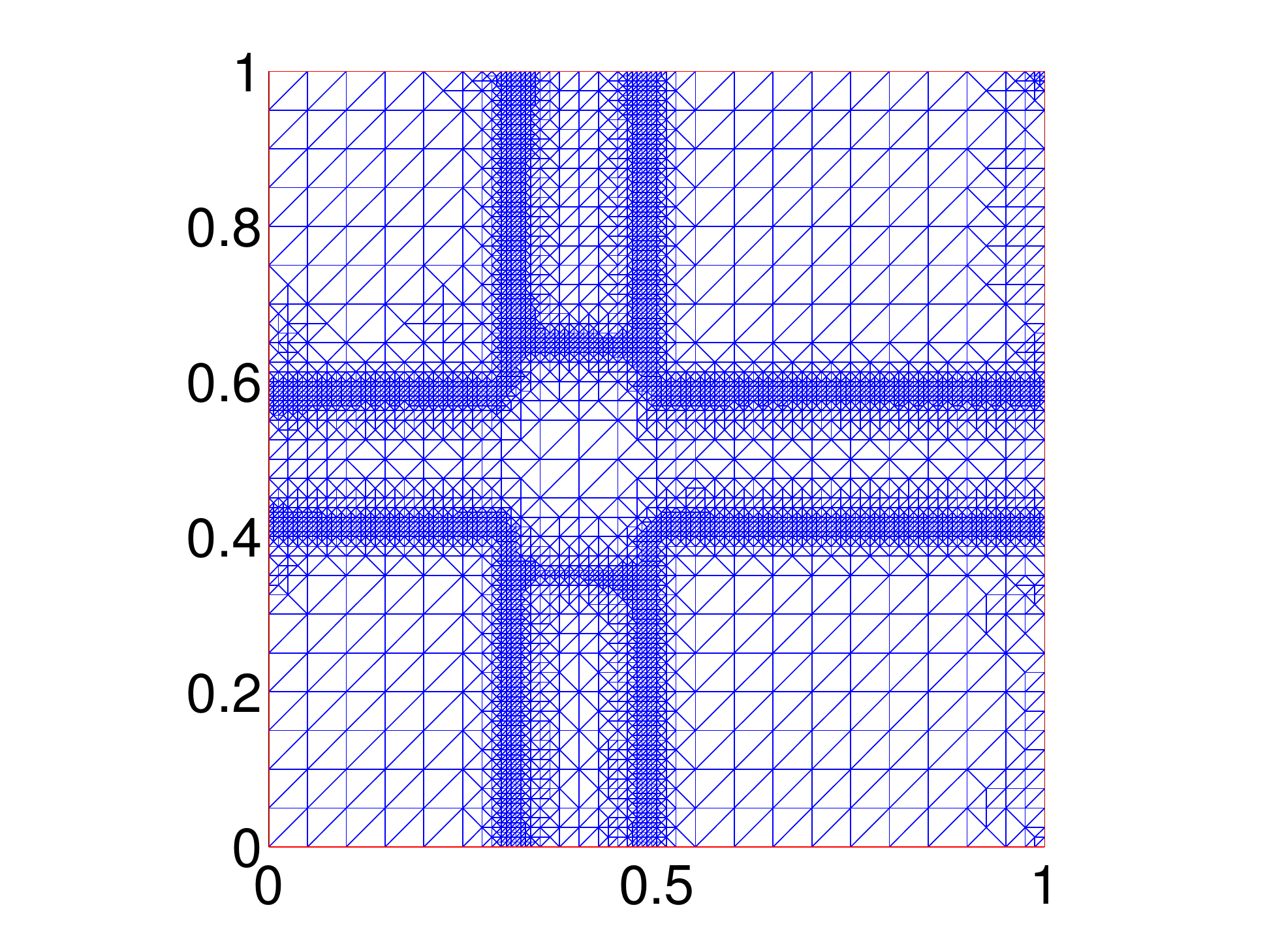}
\caption{Adaptive mesh after the third adaptive refinement in Example \ref{ex8}.}
\label{fig:ex8mesh}
\end{figure}

To conclude, in all the tests performed, nonzero values of $\delta$ were of magnitude $10^{-18}$-$10^{-15}$. 
This is within the limit of machine precision, 
so numerically these numbers are considered zero. 
In addition to verifying the equality, 
we also performed three simple examples to show that the majorant 
can be used to perform refinement of element meshes
without any additional computational expenditures.


\begin{acknow}{10}
We are deeply indebted to Sergey Repin for so many interesting
and encouraging discussions. We thank him gratefully not only 
for being an academic colleague and teacher but also a good friend.

The first author also thanks Jan Valdman for showing how to program vectorized FEM solvers.
The first author is funded by the Finnish foundations KAUTE Foundation and
V\"ais\"al\"a Foundation of the Finnish Academy of Science and Letters.

This contribution has been worked out mainly while the first author 
was visiting the Fakult\"at f\"ur Mathematik of the Universit\"at Duisburg-Essen during 2013.
\end{acknow}


\bibliographystyle{plain} 
\bibliography{/Users/paule/Library/texmf/tex/TeXinput/bibtex/paule}

\begin{thebibliography}{10}

\bibitem{anjammalimuzalevskiyneittaanmakirepinmaxtype}
I.~Anjam, O.~Mali, A.~Muzalevskiy, P.~Neittaanm\"aki, and S.~Repin.
\newblock A posteriori error estimates for a {M}axwell type problem.
\newblock {\em Russian J. Numer. Anal. Math. Modelling}, 24(5):395--408, 2009.

\bibitem{brezzifortinbookone}
F.~Brezzi and M.~Fortin.
\newblock {\em Mixed and hybrid finite element methods}.
\newblock Springer, New York, 1991.

\bibitem{goldshteinmitreairinamariushodgedecomixedbc}
V.~Gol'dshtein, I.~Mitrea, and M.~Mitrea.
\newblock {H}odge decompositions with mixed boundary conditions and
  applications to partial differential equations on {L}ipschitz manifolds.
\newblock {\em J. Math. Sci. (N.Y.)}, 172(3):347--400, 2011.

\bibitem{jakabmitreairinamariusfinensolhodgedeco}
T.~Jakab, I.~Mitrea, and M.~Mitrea.
\newblock On the regularity of differential forms satisfying mixed boundary
  conditions in a class of {L}ipschitz domains.
\newblock {\em Indiana Univ. Math. J.}, 58(5):2043--2071, 2009.

\bibitem{jochmanncompembmaxmixbc}
F.~Jochmann.
\newblock A compactness result for vector fields with divergence and curl in
  ${L}^q({\Omega})$ involving mixed boundary conditions.
\newblock {\em Appl. Anal.}, 66:189--203, 1997.

\bibitem{kuhndiss}
P.~Kuhn.
\newblock {\em Die {M}axwellgleichung mit wechselnden {R}andbedingungen}.
\newblock Dissertation, Universit\"at Essen, Fachbereich Mathematik,
  http://arxiv.org/abs/1108.2028, {\it Shaker}, 1999.

\bibitem{malimuzalevskiypaulyextell}
O.~Mali, A.~Muzalevskiy, and D.~Pauly.
\newblock Conforming and non-conforming functional a posteriori error estimates
  for elliptic boundary value problems in exterior domains: Theory and
  numerical tests.
\newblock {\em Russian J. Numer. Anal. Math. Modelling}, 28(6):577--596, 2013.

\bibitem{MaliRepinNeittaanmaki2014}
O.~Mali, P.~Neittaanm{\"a}ki, and S.~Repin.
\newblock {\em Accuracy verification methods, theory and algorithms}.
\newblock Springer, 2014.

\bibitem{NeittaanmakiRepin2004}
P.~Neittaanm{\"a}ki and S.~Repin.
\newblock {\em Reliable methods for computer simulation, error control and a
  posteriori estimates}.
\newblock Elsevier, New York, 2004.

\bibitem{paulyrepinell}
D.~Pauly and S.~Repin.
\newblock Functional a posteriori error estimates for elliptic problems in
  exterior domains.
\newblock {\em J. Math. Sci. (N.Y.)}, 162(3):393--406, 2009.

\bibitem{paulyrepinmaxst}
D.~Pauly and S.~Repin.
\newblock Two-sided a posteriori error bounds for electro-magneto static
  problems.
\newblock {\em J. Math. Sci. (N.Y.)}, 166(1):53--62, 2010.

\bibitem{paulyrepinrossihypmax}
D.~Pauly, S.~Repin, and Rossi T.
\newblock Estimates for deviations from exact solutions of {M}axwell's initial
  boundary value problem.
\newblock {\em Ann. Acad. Sci. Fenn. Math.}, 36(2):661--676, 2011.

\bibitem{rahmanvaldmanfastmatlab}
T.~Rahman and J.~Valdman.
\newblock Fast {MATLAB }assembly of {FEM} matrices in {2D} and {3D}: nodal
  elements.
\newblock {\em Appl. Math. Comput.}, 219(13):7151--7158, 2013.

\bibitem{repinbookone}
S.~Repin.
\newblock {\em A posteriori estimates for partial differential equations}.
\newblock Walter de Gruyter (Radon Series Comp. Appl. Math.), Berlin, 2008.

\bibitem{repinapostwave}
S.~Repin.
\newblock Estimates of deviations from exact solutions of initial boundary
  value problems for the wave equation.
\newblock {\em J. Math. Sci. (N. Y.)}, 159(2):229--240, 2009.

\bibitem{repinsautersmolianskiaposttwosideell}
S.~Repin, S.~Sauter, and A.~Smolianski.
\newblock Two-sided a posteriori error estimates for mixed formulations of
  elliptic problems.
\newblock {\em SIAM J. Numer. Anal.}, 45(3):928--945, 2007.

\end{thebibliography}


\end{document}